\def\ekv#1#2{\begeq\label{#1}#2\endeq}
\def\eekv#1#2#3{\begin{eqnarray}\label{#1}#2 \\ #3
\nonumber\end{eqnarray}}
\def\eeekv#1#2#3#4{\begin{eqnarray}\label{#1}#2 \\ #3
\nonumber\\#4\nonumber\end{eqnarray}}
\def\iint{\int\hskip -2mm\int} \def\iiint{\int\hskip -2mm\int\hskip
-2mm\int}
\def\iiiint{\int\hskip -2mm\int\hskip -2mm\int\hskip -2mm\int}
\font\liten=cmr10 at 8pt 
\font\lagom=cmr10 at 10pt
  \def\3{\vert \hskip -1pt\vert\hskip -1pt\vert }
\def\aby{arbitrary}
\def\ctf{canonical transformation}
\def\F{Fourier}
\def\hol{holomorphic}
\def\inv{^{-1}}
\def\no#1{(\ref{#1})}
\def\3{\vert\hskip -1pt\vert\hskip -1pt\vert }
 \def\pop{pseudodifferential operator}
   \def\rhs{right hand side}
  \def\trans{^t\hskip -2pt}
\def\Re{{\mathrm Re\,}} \def\Im{{\mathrm Im\,}}
\title{
Pseudodifferential operators and weighted normed symbol spaces} \author{J.
Sj{\"o}strand\\ \lagom CMLS\\ \lagom Ecole Polytechnique\\ \lagom FR  91120
Palaiseau c\'edex, France\\ \lagom johannes@{}math.polytechnique.fr \\ \lagom 
UMR7640--CNRS}  \date{}
\def\wrtext#1{\relax\ifmmode{\leavevmode\hbox{#1}}\else{#1}\fi}
 \def\begeq{\begin{equation}}
\def\endeq{\end{equation}}
\def\part#1{\frac{\partial}{\partial #1}}
\renewcommand{\exp}{\mbox{\rm exp\,}}
\newtheorem{dref}{Definition}[section]
\newtheorem{lemma}[dref]{Lemma}
\newtheorem{theo}[dref]{Theorem}
\newtheorem{prop}[dref]{Proposition}
\newtheorem{ex}[dref]{Example}
\newtheorem{cor}[dref]{Corollary}
\newtheorem{ass}[dref]{Assumption}
\newenvironment{proof}{\par\noindent{{\bf Proof }}}{\hfill$\Box$
\medskip}
\begin{document}
\maketitle

\begin{abstract} \smallskip
\par In this work we study some general classes of pseudodifferential operators
 where the classes of symbols are defined in terms of phase space 
estimates.
 \medskip\par
\centerline{\bf R\'esum\'e}
\smallskip
\par  On \'etudie des classes g\'en\'erales d'op\'erateurs 
pseudodiff\'erentiels
dont les classes de symboles sont d\'efinis en termes d'\'estimations dans 
l'espace de phase.
\end{abstract}

\vskip 2mm
\noindent
{\bf Keywords and Phrases:} Pseudodifferential operator, symbol, 
modulation space.
\vskip 1mm
\noindent
{\bf Mathematics Subject Classification 2000}: 35S05
\tableofcontents
\section{Introduction}\label{int}
\setcounter{equation}{0}

\par This paper is devoted to 
pseudodifferential operators with symbols of limited regularity. The
author
\cite{Sj1} introduced the space of 
symbols $a(x)$ on the phase space $E={\bf R}^n\times ({\bf R}^n)^*$
with the  property that
\ekv{int.1}
{\vert\widehat{\chi_\gamma a}(x^*)\vert \le  F(x^*),\ \forall \gamma
\in \Gamma }  for some $L^1$  function $F$ on $E^*$. Here the hat
indicates that we take the \F{} transform,  $\Gamma \subset E$ is a
lattice and $\chi_\gamma (x)=\chi_0(x-\gamma )$ form a partition of
unity, $1=\sum_{\gamma \in\Gamma }\chi_\gamma $, $\chi_0 \in  {\cal
S}(E)$. A.~Boulkhemair \cite{Bo1} noticed that this space is identical
to a space that  he had defined differently in \cite{Bo0}.

It was shown among other things that this space of symbols is an
algebra  for the ordinary multiplication and that this fact persists
after  quantization, namely the corresponding pseudodifferential
operators  (say under Weyl quantization) form a non-commutative
algebra:  If $a_1,a_2$ belong to the class above with corresponding
$L^1$ functions  $F_1$ and $F_2$ then $a_1^w\circ a_2^w=a_3^w$ where
$a_3$ belongs to the same  class and as a correponding function we may
take $F_3=C_NF_1*F_2*
\langle \cdot  \rangle^{-N}$ for any $N> 2n$. Here $*$ indicates 
convolution and $a^w:{\cal S}({\bf R}^n)\to {\cal S}'({\bf R}^n)$ is
the  Weyl quantization of the symbol $a$, given by
\ekv{int.2}
{ a^wu(x)={1\over (2\pi )^n}\iint e^{i(x-y)\cdot \theta }a({x+y\over
2},\theta )u(y)dyd\theta .  } The definition \no{int.1} is
independent of the choice of lattice and the corresponding function
$\chi_0$. When passing to a different choice, we may have to change 
the function $F$ to
$m(x^*)=F*\langle \cdot \rangle^{-N_0}$ for any fixed  $N_0>2n$. We
then gain the fact that the weight $m$ is an order function in the
sense that
\ekv{int.3}
{ m(x^*)\le C_0\langle x^*-y^* \rangle ^{N_0}m(y^*),\ x^*,y^*\in E^*.
} (See \cite{DiSj} where this notion is used for developing a fairly
simple  calculus of semi-classical pseudodifferential operators,
basically a special  case of H\"ormander's Weyl calculus \cite{Ho}.)

The space of functions in \no{int.1} is a special case of  the
modulation spaces of  H.G.~Feichtinger (see \cite{Fe, FeGr}), and the
relations  between these spaces and pseudodifferential operators  have
been developed by many authors; K.~Gr\"ochenig \cite{Gr2,
Gr3}, Gr\"ochenig,  T. Strohmer \cite{GrSt}, K.~Tachigawa \cite{Ta},
J.~Toft \cite{To1}, A. Holst, J. Toft, P. Wahlberg \cite{HoToWa}. 
Here we could  mention that
Boulkhemair \cite{Bo2} proved $L^2$-continuity for Fourier integral
operators with symbols and phases in the original spaces of the type
\no{int.1}, that T. Strohmer \cite{St} has applied the theory to
problems in mobile communications and that  Y.~Morimoto and N.~Lerner
\cite{MoLe} have used the original space to prove a version of the
Fefferman-Phong inequality for pseudodifferential operators with
symbols of low regularity. This result was recently improved by 
Boulkhemair \cite{Bo5}.

Closely related works on pseudodifferential - and Fourier - integral
operators with symbols  of limited regularity include the works 
of  Boulkhemair
\cite{Bo3, Bo4}, and many others also contain a study of  when such
operators or related Gabor localization operators  belong to to
Schatten-von Neumann classes: E.~Cordero, Gr\"ochenig \cite{CoGr1,
CoGr2}, C.~Heil, J.~Ramanathan, P.~Topiwala
\cite{HeRaTo}, Heil \cite{He}, J.~Toft \cite{To2}, and M.W.~Wong \cite{Wo}.

The present work has been stimulated by these developments and the
prospect of using  ``modulation type weights'' to get more flexibility
in the calculus of pseudodifferential  operators with limited
regularity. In the back of our head there were also some very
stimulating discussions with J.M.~Bony and N.~Lerner from the time  of
the writing of \cite{Sj1, Sj2} and at that time Bony explained to the 
author a
nice very general  point of view of A. Unterberger \cite{Un} 
for a direct microlocal analysis of very general
classes of operators. Bony used it in his
work \cite{Bon1} and showed how his approach could be 
applied to recover and generalize the space 
in \cite{Sj1}. However, the aim of the work \cite{Bon1} was to develop
a very general theory of  Fourier integral operators related to
symplectic metrics of H\"ormander's Weyl calculus of
pseudodifferential operators, and the relation with \cite{Sj1} was
explained very briefly.  See \cite{Bon2} for even more
general classes of Fourier integral operators.

\par
In the present paper we make a direct generalization of the spaces of \cite{Sj1}.
Instead of using order functions only depending on $x^*$ we can now allow arbitrary order 
functions $m(x,x^*)$. See Definition \ref{sy1} below. In Proposition 
\ref{sy4} we show that this definition gives back the spaces above 
when the weight $m(x^*)$ is an order function of $x^*$ only.

\par In Section \ref{l2} we consider the quantization of our symbols 
and show how to define
an associated effective kernel on $E\times E$, $E=T^*{\bf R}^n$, which is 
${\cal O}(1)m(\gamma (x,y))$ where $\gamma (x,y)=({x+y\over 2},J\inv (y-x))$ 
and $J:E^*\to E$ is the natural Hamilton map induced by the symplectic 
structure. 
We show that if the effective kernel is the kernel of a bounded operator
: $L^2(E)\to L^2(E)$ then our pseudodifferential operator is bounded in 
$L^2({\bf R}^n)$. In particular if $m=m(x^*)$ only depends on $x^*$, we recover 
the $L^2$-boundedness when $m$ is integrable. This result was obtained previously by 
Bony \cite{Bon1}, but our approach is rather different.

\par In Section \ref{co} we study the composition of pseudodifferential 
operators in our classes. If $a_j$ are symbols associated 
to the order functions $m_j$, $j=1,2$, then the Weyl composition is a 
well defined symbol associated to the order function $m_3(z,z^*)$ given 
in \no{co.12}, provided that the integral there converges for at least 
one value of $(z,z^*)$ (and then automatically for all other values by 
Proposition \ref{co1}). This statement is equivalent to the corresponding 
natural one for the effective kernels, namely the composition is well 
defined if the composition of the majorant kernels 
$m_1({x+y\over 2},J\inv (y-x))$ 
and $m_2({x+y\over 2},J\inv (y-x))$  is well-defined, see \no{co.16}, 
\no{co.17}.

In Section \ref{di} we simplify the results further (for those readers 
who are familiar with Bargmann transforms from the FBI - complex Fourier 
integral operator point of view). 

\par In Section \ref{cp} we use the same point 
of view to give a simple sufficient condition on the order function $m$ 
and the index $p\in [1,\infty ]$, for  the quantization $a^w$ to
belong to the Schatten--von Neumann class $C_p$
for every symbol 
$a$ belonging to the symbol class with weight $m$. See \cite{To2, To3, HoToWa,
GrHe, GrHe2} for related results and ideas.

\par In Section \ref{ge} we finally generalize our results by replacing 
the underlying space $\ell^\infty $ on certain lattices by  more general 
translation invariant Banach spaces. We believe that this generalization 
allows to 
include modulation spaces, but we have contented ourselves by establishing 
results allowing to go from properties on the level of lattices to the 
level of 
pseudodifferential operators. The results could undoubtedly be even further 
generalized. In this section and the preceding one, we have been inspired by 
the use of lattices and amalgan spaces in time frequency analysis, 
in particular by the work of Gr\"ochenig and Strohmer \cite{GrSt} that 
uses previous results by Fournier--Stewart \cite{FoSt} and Feichtinger 
\cite{Fe2}.

\par We have chosen to work with the Weyl quantization, but it is clear 
that the results
carry over with the obvious modifications to other quantizations like the 
Kohn-Nirenberg one, actually for the general symbol-spaces under consideration 
the results could also have been formulatated directly for classes of integral 
operators. 

\par Similar ideas and results have been obtained in many other 
works, 
out of 
which some are cited above and later in the text.

\medskip

\par\noindent\bf Acknowledgements. \rm We thank J.M. Bony for a very 
stimulating 
and helpful recent discussion. The author also thanks K. Gr\"ochenig,
T. Strohmer, A. Boulkhemair and J. Toft for several helpful 
comments and references.

\section{Symbol spaces}\label{sy}
\setcounter{equation}{0}

\par Let $E$ be a $d$-dimensional real vector space. We say that 
$m:E\to ]0,\infty [ $ is an order function on $E$ if there exist 
constants $C_0>0$, $N_0\ge 1$, such that 
\ekv{sy.1}
{
m(\rho )\le C_0\langle \rho -\mu \rangle ^{N_0}m(\mu ),\ \forall \rho ,\mu 
\in E.
}
Here $\langle \rho -\mu \rangle =(1+\vert \rho -\mu \vert ^2)^{1/2}$ 
and $\vert\,\vert$ is a norm on $E$.

Let $E$ be as above, let $E^*$ be the dual space and let $\Gamma $ be a 
lattice in  $E\times E^*$, so that $\Gamma ={\bf Z}e_1+{\bf Z}e_2+...+{\bf Z}
e_{2d}$ where $e_1,...,e_{2d}$ is a basis in $E\times E^*$. 
Let $\chi \in{\cal S}(E\times E^*)$ have the property that 
\ekv{sy.2}
{\sum_{\gamma \in\Gamma }\tau _\gamma \chi =1,\quad \tau_\gamma 
 \chi (\rho )= \chi (\rho -\gamma ).}
Let $m$ be an order function on $E\times E^*$, $a\in {\cal S}'(E)$.

\begin{dref}\label{sy1}\rm We say that $a\in\widetilde{S}(m)$ 
if there is a constant $C>0$ such that 
\ekv{sy.3}
{
\Vert \chi _\gamma ^wa\Vert\le Cm(\gamma ),\ \gamma \in \Gamma ,
} 
where $\chi _\gamma =\tau_\gamma \chi $ and $\chi _\gamma^w $ denotes the Weyl quantization of $\chi _\gamma $. The norm will always be the the one in 
$L^2$ if nothing else is indicated.
\end
{dref}

To define the $L^2$-norm we need to choose a Lebesgue measure on $E$, but 
clearly that can only affect the choice of the constant in \no{sy.3}. 

\begin{prop}\label{sy2} $\widetilde{S}(m)$ is a Banach space with 
$\Vert a\Vert_{\widetilde{S}(m)}$ equal to the smallest possible 
constant in 
\no{sy.3}. Changing $\Gamma$, $\chi$ and replacing the $L^2$ norm by 
the $L^p$-norm 
for any $p\in [1,\infty ]$ in the above definition, gives rise to the same 
space with an equivalent norm.
\end{prop}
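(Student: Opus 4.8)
The plan is to prove the two assertions — the Banach space structure and the independence of $\Gamma$, $\chi$ and $p$ — from a single phase-space \emph{almost orthogonality} estimate, together with the reconstruction identity furnished by the partition \no{sy.2}. Since $\sum_\gamma\chi_\gamma=1$ and the Weyl quantization of the symbol $1$ is the identity, one has $\sum_{\gamma\in\Gamma}\chi_\gamma^w=\mathrm{Id}$ weakly on ${\cal S}'(E)$, so every $a$ is reconstructed from its pieces $\chi_\gamma^w a\in{\cal S}(E)$ (the operators $\chi_\gamma^w$ are smoothing because $\chi\in{\cal S}$). That $\Vert\cdot\Vert_{\widetilde S(m)}$ is a norm equal to the smallest admissible constant in \no{sy.3} is then immediate: it is by construction $\sup_\gamma\Vert\chi_\gamma^w a\Vert/m(\gamma)$; homogeneity and the triangle inequality are clear, and if it vanishes then $\chi_\gamma^w a=0$ for all $\gamma$, whence $a=\sum_\gamma\chi_\gamma^w a=0$.

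For completeness I would take a Cauchy sequence $(a_j)$. For each fixed $\gamma$, the inequality $\Vert\chi_\gamma^w(a_j-a_k)\Vert\le m(\gamma)\Vert a_j-a_k\Vert_{\widetilde S(m)}$ shows $\chi_\gamma^w a_j\to b_\gamma$ in $L^2$ with $\Vert b_\gamma\Vert\le Cm(\gamma)$, $C=\sup_j\Vert a_j\Vert_{\widetilde S(m)}$. I then define $a$ by $\langle a,u\rangle=\sum_\gamma\langle b_\gamma,u\rangle$ for $u\in{\cal S}(E)$; the series converges and defines a tempered distribution because $\langle b_\gamma,u\rangle$ is, in the limit, $\langle a_j,\overline{\chi}_\gamma^w u\rangle$, and the quantities $\Vert\overline{\chi}_\gamma^w u\Vert$ decay rapidly in $\gamma$ (the adjoint form of the estimate below), which against the polynomial growth of $m$ makes the sum absolutely convergent. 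One checks $\chi_\gamma^w a=b_\gamma$ and then $\Vert a-a_j\Vert_{\widetilde S(m)}\to 0$ by passing to the limit in the uniform Cauchy estimate.

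The heart of the statement is the change of $\Gamma$, $\chi$ and $p$, and the device is the covariance of Weyl quantization under phase-space translation: if $T_\rho$ is the Heisenberg--Weyl unitary attached to $\rho\in E\times E^*$, then $(\tau_\rho\chi)^w=T_\rho\chi^w T_\rho^{-1}$. Writing the second system as $\psi_\delta=\tau_\delta\psi$, this reduces $\psi_\delta^w\chi_\gamma^w$, up to conjugation by unitaries and a phase, to the Moyal product $(\psi\#\tau_{\gamma-\delta}\chi)^w$, i.e. the product of a symbol localized near the origin with one localized near $\gamma-\delta$. Non-stationary phase in the oscillatory integral for $\#$ gives, for every $N$, Schwartz-seminorm bounds on this symbol of size $C_N\langle\gamma-\delta\rangle^{-N}$; since a Weyl operator with Schwartz symbol has its kernel in ${\cal S}$ and so maps $L^2\to L^p$ with norm controlled by such seminorms, I obtain the almost-orthogonality estimate
\[
\Vert\psi_\delta^w\,\tilde\chi_\gamma^w\Vert_{L^2\to L^p}\le C_N\langle\gamma-\delta\rangle^{-N},\qquad \text{all }N,\ \text{uniformly in }p\in[1,\infty].
\]
Inserting $\mathrm{Id}=\sum_\gamma\chi_\gamma^w$ and re-localizing each piece through an auxiliary Schwartz window $\tilde\chi$ that is a unit for $\chi$ under $\#$ (so $\chi_\gamma^w a=\tilde\chi_\gamma^w(\chi_\gamma^w a)$), this turns into the discrete Schur bound
\[
\Vert\psi_\delta^w a\Vert_{L^p}\le\sum_\gamma\Vert\psi_\delta^w\tilde\chi_\gamma^w(\chi_\gamma^w a)\Vert_{L^p}\le C_N\sum_\gamma\langle\gamma-\delta\rangle^{-N}\Vert\chi_\gamma^w a\Vert_{L^2}.
\]
The order-function property \no{sy.1}, in the form $m(\gamma)\le C_0\langle\gamma-\delta\rangle^{N_0}m(\delta)$, lets me factor out $m(\delta)$ and, for $N>N_0+2d$, sum the kernel to a finite constant, giving $\Vert\psi_\delta^w a\Vert_{L^p}\le C''\,m(\delta)\,\Vert a\Vert_{\widetilde S(m)}$; exchanging the two systems yields the reverse bound and hence the equivalence of norms.

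I expect the decisive difficulty to be exactly this passage from the operator estimate to the Schur bound. Because $a$ is only a tempered distribution, the decay in $\gamma-\delta$ cannot be produced by applying $\psi_\delta^w$ to the $L^2$ function $\chi_\gamma^w a$ (that yields only the $\gamma$-uniform operator norm); it must be realized as a genuine off-diagonal kernel between the two families of $L^2$ pieces, which forces the re-localization of $\chi_\gamma^w a$ through a Schwartz near-unit $\tilde\chi$ for $\chi$. Establishing the existence and the right quantitative behaviour of such an auxiliary window, and keeping the Moyal off-diagonal decay uniform down to the endpoints $p=1,\infty$, is where the real work lies; the Banach-space assertions are comparatively soft once the reconstruction identity is in hand.
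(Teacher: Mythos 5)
Your overall strategy --- reconstruct $a$ from the pieces $\chi_\gamma^w a$, exploit off-diagonal decay of Moyal products of translated Schwartz windows, and close the estimate with the order-function property and a Schur-type summation --- is essentially the paper's, and your treatment of the norm and of completeness is fine (the paper does not even spell the latter out). But the step you yourself flag as ``where the real work lies'' is a genuine gap, not a technicality: in general there is \emph{no} $\tilde\chi\in{\cal S}$ with $\tilde\chi\#\chi=\chi$. That identity means $(1-\tilde\chi^w)\chi^w=0$, i.e.\ $1-\tilde\chi^w$ vanishes on the range of $\chi^w$; since $\tilde\chi^w$ is compact, $1-\tilde\chi^w$ has finite-dimensional kernel, whereas for a generic admissible window $\chi$ the operator $\chi^w$ has infinite rank --- so no such exact Moyal unit exists. (The Moyal product is not local, so taking $\tilde\chi\equiv 1$ near $\mathrm{supp}\,\chi$ gives $\tilde\chi\#\chi=\chi+r$ with $r$ small but nonzero, and $r^w$ is then applied to the unknown distribution $a$, which you cannot control.) Without the exact re-localization, your Schur bound degenerates to $\Vert\psi_\delta^w(\chi_\gamma^w a)\Vert_{L^p}\le C\,m(\gamma)$ with no gain in $\langle\gamma-\delta\rangle$, and the sum over $\gamma$ diverges.

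The paper's repair is Lemma \ref{sy3}: instead of an individual left unit for $\chi$, one constructs a single $\psi\in{\cal S}(E\times E^*)$ with the \emph{global} identity $\sum_{\gamma\in\Gamma}\psi_\gamma^w\chi_\gamma^w=1$. The construction is a Neumann series: with $\widetilde\chi^\epsilon$ a dilated cutoff equal to $1$ near $0$, one has $\sum_\gamma(\widetilde\chi_\gamma^\epsilon)^w\chi_\gamma^w=1-R_\epsilon$ with $R_\epsilon\to 0$ in ${\rm Op}(S^0)$ as $\epsilon\to 0$, hence invertible for small $\epsilon$; a Beals-type lemma shows the inverse is $\Psi^w$ with $\Psi\in S^0$ and $\Gamma$-periodic, and one sets $\psi_\gamma^w=\Psi^w(\widetilde\chi_\gamma^\epsilon)^w$. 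Decomposing $a=\sum_\gamma\psi_\gamma^w(\chi_\gamma^w a)$, each summand is a fixed $\gamma$-translated smoothing operator applied to an $L^2$ function of norm ${\cal O}(m(\gamma))$, and the almost-orthogonality bound $\Vert\widetilde\chi_{\widetilde\gamma}^w\psi_\gamma^w\Vert_{{\cal L}(L^2,L^p)}\le C_{p,N}\langle\widetilde\gamma-\gamma\rangle^{-N}$ then runs exactly the Schur argument you intended. If you substitute this lemma for your hypothetical unit $\tilde\chi$, the rest of your proof goes through unchanged.
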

\begin{proof} The Banach space property will follow from the other arguments 
so we do not treat it explicitly. Let $m,\Gamma ,a$ be as in Definition 
\ref{sy1}.

\par Let $\widetilde{\Gamma}$ be another lattice and let $\widetilde{\chi }$ 
be another function with the same properties as $\chi$. We have to show that 
$$\Vert \widetilde{\chi}^w_{\widetilde{\gamma }}a\Vert_{L^p}\le  
\widetilde{C}m(\widetilde{\gamma }),\quad \widetilde{\gamma }\in 
\widetilde{\Gamma }.$$
\begin{lemma}\label{sy3}
$\exists \psi \in {\cal S}(E\times E^*)$ such that $\sum_{\gamma \in\Gamma }\psi _\gamma^w \chi _{\gamma }^w=1$, where $\psi _\gamma =\tau_\gamma \psi $.
\end{lemma}
\begin{proof}
Let $\widetilde{\chi}\in{\cal S}(E\times E^*)$ be equal to 1 near $(0,0)$, 
and put $\widetilde{\chi }^\epsilon (x,\xi )=\widetilde{\chi}
(\epsilon (x,\xi ))$. Then $\sum_{\gamma \in\Gamma }(1-\widetilde{\chi }_\gamma ^\epsilon )\# \chi _\gamma \to 0$ in $S^0(E\times E^*)$, when $\epsilon \to 0$, so for $\epsilon >0$ small enough, 
$$\sum_{\gamma \in\Gamma }(\widetilde{\chi }_\gamma ^\epsilon )^w\chi _\gamma 
^w=1-\sum_{\gamma \in\Gamma }(1-\widetilde{\chi }_\gamma ^\epsilon )^w\chi _\gamma 
^w$$ 
has a bounded inverse in ${\cal L}(L^2,L^2)$. Here $S^0$ is the space of 
all $a\in C^\infty (E\times E^*)$ that are bounded with all their 
derivatives. By a version of the Beals lemma 
(see for instance \cite{DiSj}), we then know that the inverse is of the form 
$\Psi ^w$ where $\Psi \in S^0$.  
Also $\tau_\gamma \Psi =\Psi $, $\gamma \in\Gamma $. Put $\psi_\gamma ^w =
\Psi ^w\circ (\widetilde{\chi }_\gamma ^\epsilon )^w$ for $\epsilon $ small 
enough and fixed, so that $\psi _\gamma =\tau_\gamma \psi _0$, 
$\psi _0\in{\cal S}$ (using for instance the simple pseudodifferential 
calculus in \cite{DiSj}). Then $\sum_{\gamma }\psi _\gamma ^w
\chi _\gamma ^w=1$.
\end{proof}

\par Now, write 
$$\widetilde{\chi }_{\widetilde{\gamma 
}}^wa=\sum_{\gamma \in \Gamma }\widetilde{\chi }_{\widetilde{\gamma }}^w \psi 
_\gamma^w\chi _\gamma ^w a.$$
Here (using for instance \cite{DiSj})
$$\Vert \widetilde{\chi }_{\widetilde{\gamma }}\psi _\gamma ^w\Vert_{{\cal L}(L^2,L^p)}\le C_{p,N}\langle \widetilde{\gamma }-\gamma \rangle ^{-N},\ 1\le p\le \infty ,\,N\ge 0.$$
Hence, if $N$ is large enough, \begin{eqnarray}\label{sy.3.5}
\Vert \widetilde{\chi }_{\widetilde{\gamma }}^wa\Vert_{L^p}&\le&
C_{p,N}\sum_{\gamma \in\Gamma }\langle \widetilde{\gamma }-\gamma 
\rangle ^{-N}\Vert \chi _{\gamma }^w a\Vert_{L^2}\\
&\le& \widetilde{C}_{p,N,a}\sum_{\gamma \in \Gamma }\langle 
\widetilde{\gamma }-\gamma \rangle ^{-N}m(
\gamma )\nonumber
\\
&\le &\widehat{C}_{p,N,a,m}(\sum_{\gamma \in \Gamma }\langle 
\widetilde{\gamma }-\gamma \rangle ^{-N+N_0})m(\widetilde{\gamma })
\nonumber
\\
&\le &\check{C}m(\widetilde{\gamma }).\nonumber
\end{eqnarray} 

\par Conversely, if $\Vert \widetilde{\chi }_{\widetilde{\gamma }}^wa
\Vert_{L^p}\le {\rm Const\,}m(\widetilde{\gamma })$, $\widetilde{\gamma }\in 
\widetilde{\Gamma }$, we see that by the same arguments that $\Vert 
\chi _\gamma ^wa\Vert_{L^2}\le {\cal O}(1)m(\gamma )$, $\gamma \in\Gamma$. 
\end{proof}

Next, we check that this is essentially a generalization of a space introduced 
by Sj\"ostrand \cite{Sj1} and independently and in a different way by 
Boukhemair \cite{Bo0}. It is a 
special case of more general modulation spaces (see \cite{Fe,FeGr}).
That follows from the next result if we take an order function $m(x,x^*)$ 
independent of 
$x$.
\begin{prop}\label{sy4}
Let $m=m(x,x^*)$ be an order function on $E\times E^*$
and let $\chi \in {\cal S}(E)$, $\sum_{j\in J}\chi _j=1$, 
where $J\subset E$ is a lattice and 
$\chi _j=\tau_j\chi$. Then 
\ekv{sy.4}
{\widetilde{S}(m)=\{ a\in {\cal S}'(E);\, \exists C>0,\, \vert 
\widehat{\chi _ju}(x^*)\vert \le Cm(j,x^* )\}.
} 
\end{prop}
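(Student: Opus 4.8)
The plan is to recast both sides of \no{sy.4} as estimates on a short-time Fourier transform of $a$ and to match them through an intermediate, window-independent description. Fix a Gaussian $\phi\in{\cal S}(E)$ with $\Vert\phi\Vert_{L^2}=1$, write $\pi(x,x^*)$, $(x,x^*)\in E\times E^*$, for the phase-space translation $\pi(x,x^*)\phi(y)=e^{iy\cdot x^*}\phi(y-x)$, and set $V_g a(x,x^*)=\langle a,\pi(x,x^*)g\rangle$. The right-hand side of \no{sy.4} is, up to a harmless phase, exactly the assertion that $\vert V_\chi a(j,x^*)\vert\le Cm(j,x^*)$ for all $j\in J$, $x^*\in E^*$, since $\widehat{\chi_j a}(x^*)=\int\chi(y-j)a(y)e^{-iy\cdot x^*}\,dy$ is the windowed Fourier transform of $a$ with window $\chi$, sampled in position along $J$ and taken continuously in frequency. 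I will therefore prove the chain (A) $a\in\widetilde S(m)$ $\Longleftrightarrow$ (C) $\sup_{w}\vert V_\phi a(w)\vert/m(w)<\infty$ $\Longleftrightarrow$ (B) $\sup_{j\in J,\,x^*}\vert V_\chi a(j,x^*)\vert/m(j,x^*)<\infty$, where in (C) the transform is taken continuously over all $w\in E\times E^*$.

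For (C)$\Rightarrow$(A) I start from the coherent-state resolution of the identity, $a=(2\pi)^{-d}\int V_\phi a(w)\,\pi(w)\phi\,dw$ (with $d=\dim E$ and $\Vert\phi\Vert=1$), which gives $\chi_\gamma^w a=(2\pi)^{-d}\int V_\phi a(w)\,\chi_\gamma^w\pi(w)\phi\,dw$. The key input is the off-diagonal bound $\Vert\chi_\gamma^w\pi(w)\phi\Vert_{L^2}\le C_N\langle w-\gamma\rangle^{-N}$, which says that a symbol concentrated near $\gamma$ applied to a coherent state concentrated at $w$ is rapidly small in $\vert w-\gamma\vert$; it follows from the simple calculus of \cite{DiSj} or from a direct non-stationary phase estimate. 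Combined with $\vert V_\phi a(w)\vert\le Cm(w)\le CC_0\langle w-\gamma\rangle^{N_0}m(\gamma)$ from the order function property \no{sy.1} and an integration in $w$ (convergent once $N>N_0+2d$), this yields $\Vert\chi_\gamma^w a\Vert_{L^2}\le C'm(\gamma)$, i.e.\ $a\in\widetilde S(m)$. For (A)$\Rightarrow$(C) I run the estimate backwards using Lemma \ref{sy3}: from $a=\sum_\gamma\psi_\gamma^w\chi_\gamma^w a$ I get $V_\phi a(w)=\sum_\gamma\langle\chi_\gamma^w a,(\psi_\gamma^w)^*\pi(w)\phi\rangle$, and the same localization bound $\Vert(\psi_\gamma^w)^*\pi(w)\phi\Vert_{L^2}\le C_N\langle w-\gamma\rangle^{-N}$ together with $\Vert\chi_\gamma^w a\Vert_{L^2}\le Cm(\gamma)$ and \no{sy.1} make the lattice sum converge to $\le C'm(w)$.

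It remains to compare the continuous Gaussian transform (C) with the position-sampled window-$\chi$ transform (B). The direction (C)$\Rightarrow$(B) is routine: the change-of-window inequality $\vert V_\chi a\vert\le C\,\vert V_\phi a\vert*\vert V_\chi\phi\vert$, with $V_\chi\phi\in{\cal S}(E\times E^*)$, gives $\vert V_\chi a(w)\vert\le C\,(m*\langle\cdot\rangle^{-N})(w)\le C'm(w)$ by \no{sy.1}, and one restricts to $w=(j,x^*)$. The converse (B)$\Rightarrow$(C) is the heart of this step. Using the partition of unity $\sum_{j\in J}\chi_j=1$ on $E$ one writes $V_\phi a(x,x^*)=\sum_{j\in J}\langle\chi_j a,\pi(x,x^*)\phi\rangle$, and each term is, by Fourier expansion of $\overline{\phi(\cdot-x)}$, a convolution in the frequency variable of $\widehat{\chi_j a}$ against the Schwartz kernel $\widehat{\phi}$ (bringing in the hypothesis $\vert\widehat{\chi_j a}\vert\le Cm(j,\cdot)$), weighted by a spatial factor $\langle x-j\rangle^{-N}$ coming, via a Peetre-type inequality, from the product of the two far-apart bumps $\chi(\cdot-j)$ and $\phi(\cdot-x)$. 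The frequency convolution is absorbed into $O(m)$ by \no{sy.1} in the $x^*$ variable, the spatial decay renders the sum over $j$ convergent, and once more the order-function loss $\langle\cdot\rangle^{N_0}$ is beaten by taking $N>N_0+2d$, giving $\vert V_\phi a(x,x^*)\vert\le C'm(x,x^*)$.

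The main obstacle is not the conceptual identification but the two quantitative mechanisms that drive it: the off-diagonal localization estimates of the type $\Vert\chi_\gamma^w\pi(w)\phi\Vert\le C_N\langle w-\gamma\rangle^{-N}$ (and their $\psi$-analogues), and, in the hybrid discrete-position/continuous-frequency setting of (B), the simultaneous extraction of spatial decay in $\vert x-j\vert$ and frequency-convolution structure when passing from the samples $\widehat{\chi_j a}$ back to the full transform. Both are ultimately controlled by a single principle already used in the proof of Proposition \ref{sy2}: a rapidly decaying kernel convolved against, or a rapidly decaying lattice sum weighted by, an order function returns a bounded multiple of that order function, which is exactly \no{sy.1} with the polynomial loss defeated by a large decay exponent. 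All remaining ingredients — the resolution of the identity, the change-of-window inequalities, and Lemma \ref{sy3} — are standard.
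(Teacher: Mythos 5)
Your overall architecture (routing everything through the continuous Gaussian short-time Fourier bound, your condition (C)) is workable, and three of your four implications are sound: (C)$\Rightarrow$(A) and (A)$\Rightarrow$(C) via the off-diagonal localization $\Vert \chi_\gamma^w\pi (w)\phi \Vert \le C_N\langle w-\gamma \rangle^{-N}$ together with Lemma \ref{sy3}, and (C)$\Rightarrow$(B) by change of window. But the implication (B)$\Rightarrow$(C) --- which is exactly the nontrivial inclusion of the right-hand side of \no{sy.4} into $\widetilde{S}(m)$ --- has a genuine gap. You claim that each term $\langle \chi_j a,\pi (x,x^*)\phi \rangle$ is simultaneously (i) a frequency convolution of $\widehat{\chi_j a}$ against $\widehat{\phi }$, hence $O(m(j,x^*))$ by the hypothesis, and (ii) weighted by a factor $\langle x-j \rangle^{-N}$ coming from the two far-apart bumps. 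These two mechanisms cannot be run at the same time. Once you pass to the Fourier side of $\chi_j a$ --- the only side on which the hypothesis says anything --- you are pairing $\widehat{\chi_j a}$ against $\widehat{\pi (x,x^*)\phi }(\eta )=e^{-ix\cdot (\eta -x^*)}\widehat{\phi }(\eta -x^*)$, whose modulus is independent of $x$; the only trace of $x$ is an oscillating factor, and you cannot integrate it by parts because the hypothesis gives no control on derivatives of $\widehat{\chi_j a}$ (note $\partial_\eta \widehat{\chi_j a}=\widehat{(-iy)\chi_j a}$, and $y\chi_j$ is not one of your windows). Conversely, the position-side picture of two far-apart bumps requires pairing the bare distribution $a$ against the small test function $\chi_j\overline{\phi (\cdot -x)}$, which only produces a bound through unquantified ${\cal S}'$-seminorms of $a$, not through the constant $C$ of the hypothesis. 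So each term is bounded only by $Cm(j,x^*)$ uniformly in $x$, the series of these bounds diverges, and the argument does not close. The spatial localization of $\chi_j a$ that you need here is essentially the content of the proposition, so assuming it is circular.

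The paper sidesteps this entirely: it introduces a second lattice $K\subset E^*$ with $\sum_k\chi_k^*=1$, observes that the hypothesis plus Parseval gives $\Vert \chi_k^*(D)(\chi_j a)\Vert_{L^2}\le \widetilde{C}m(j,k)$ (the integration in $\eta $ against $\vert \chi_k^*\vert^2$ absorbs the order-function loss), notes that $\chi_k^*(D)\circ \chi_j(x)=\chi_{j,k}^w$ with $\chi_{j,k}=\tau_{(j,k)}\chi_{0,0}$ a translated Schwartz partition of unity on $E\times E^*$, and then invokes the lattice/partition/norm independence of Proposition \ref{sy2}. That is also the natural repair of your proof: replace (B)$\Rightarrow$(C) by (B)$\Rightarrow$(A) along these lines, after which your own (A)$\Rightarrow$(C) gives (C) for free. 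For the converse inclusion the paper again uses Proposition \ref{sy2}, this time with the ${\cal F}L^\infty$ norm; your route (A)$\Rightarrow$(C)$\Rightarrow$(B) through coherent states and change of window is a correct and genuinely different way to obtain that half.
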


\begin{proof}
Let $K\subset E^*$ be a lattice and choose $\chi ^*\in{\cal S}(E^*)$, 
such that 
$\sum_{k\in K}\chi_k^*=1$, where $\chi _k=\tau_k\chi $. If $a$ belongs to 
the set in the \rhs{} of \no{sy.4}, then by Parseval's relation,
\ekv{sy.5}
{
\Vert \chi _k^*(D)(\chi _j(x)u(x))\Vert _{L^2}\le \widetilde{C}m(j,k).
}
Now $\chi _k^*(D)\circ \chi _j(x)=\chi _{j,k}^w$, where 
$\chi_{j,k}=\tau_{j,k}\chi _{0,0}$, 
$\chi _{0,0}\in {\cal S}$, $(j,k)\in J\times J^*$, 
so $a\in \widetilde{S}(m)$. Conversely, if $a\in \widetilde{S}(m)$, 
we get \no{sy.5}. According to Proposition \ref{sy2}, we can replace 
the $L^2$ norm by any $L^p$ norm, and the proof shows that we can 
equally well replace the $L^2$ norm that of 
${\cal F}L^p$. Taking ${\cal F}L^\infty $, we get
$$\Vert \chi _k^*(x^* )\widehat{\chi _ju}(x^* )\Vert_{L^\infty }\le 
\widehat{C}m(j,k),$$
and since $m$ is an order function, we deduce that $a$ belongs to 
the set in the right hand side of \no{sy.4}. 
\end{proof}


\section{Effective kernels and $L^2$-boundedness}\label{l2}
\setcounter{equation}{0}
A closely related notion for effective kernels in terms of short time Fourier
transforms has been introduced by Gr\"ochenig and Heil \cite{GrHe}.
\par We now take $E={\bf R}^{2n}\simeq T^*{\bf R}^n$. If $a,b\in {\cal S}(E)$, we let 
\ekv{l2.1}
{
a\# b=(e^{{i\over 2}\sigma (D_x,D_y)}a(x)b(y))_{y=x}
}
denote the Weyl composition so that $(a\# b)^w=a^w\circ b^w$. Here 
$\sigma (D_{x,\xi },D_{y,\eta })=D_\xi\cdot D_y-D_x\cdot D_\eta $ where 
we write $(x,\xi)$, $(y,\eta )$ instead of $x$, $y$ whenever convenient.

\par We know that the Weyl composition is still well-defined when $a,b$ belong to 
various symbol spaces like
\ekv{l2.2}
{
S(m)=\{ a\in C^{\infty }(E);\, \vert D_x^\alpha a(x) \vert\le C_\alpha m(x)\} ,
} 
when $m$ is an order function on $E$. (See Example \ref{co3} below for a 
straight forward generalization.)

Let $\ell (x)=x\cdot x^*$ be a linear form on $E$ and let $a$ be a symbol. Then,
\eeekv{l2.3}
{
e^{i\ell}\# a&=& e^{{i\over 2}\sigma (D_x,D_y)}(e^{i\ell (x)}a(y))_{y=x}
}
{
&=& (e^{i\ell (x)}e^{{i\over 2}\sigma (\ell '(x),D_y)}a(y))_{y=x}
}
{&=& e^{i\ell (x)}(e^{{1\over 2}H_\ell}a)
}
where $H_\ell =\ell '_\xi \cdot {\partial \over \partial x}-\ell '_x \cdot 
{\partial \over \partial \xi}$ (with ``$x=(x,\xi )$'') is the Hamilton field of $\ell$. Similarly,
\ekv{l2.4}
{a\# e^{i\ell}=e^{i\ell (x)}(e^{-{1\over 2}H_\ell}a).
}

\par From \no{l2.3}, \no{l2.4}, we get 
\ekv{l2.5}
{e^{i\ell}\# a\# e^{-i\ell}=e^{H_\ell}a,
}
where we notice that $(e^{H_\ell}a)(x)=a(x+H_\ell )$, and 
\ekv{l2.6}
{e^{i{m\over 2}}\# a\# e^{i{m\over 2}}=e^{im}a,}
if $m$ is a second linear form on $E$.

\par If $a\in {\cal S}(E)$ is fixed, we may consider that $a$ is concentrated near 
$(0,0)\in E\times E^*$. Then we say that $e^{-H_\ell}e^{im}a$ is concentrated 
near $(H_\ell ,m)\in E\times E^*$. Conversely, if $b$ is concentrated near a 
point $(x_0,x_0^*)\in E\times E^*$, we let $y_0^*\in E^*$ be the unique 
vector with $x_0=H_{y_0^*}$ and write
\ekv{l2.7}
{
b=e^{-H_{y_0^*}}e^{ix_0^*}a= e^{-iy_0^*}\# e^{i{x_0\over 2}}\# a
\# e^{i{x_0\over 2}}\# e^{iy_0^*},
}
where $a$ is concentrated near $(0,0)\in E\times E^*$. 

\par To make this more precise, let (as in \cite{Sj3})
\ekv{l2.8}
{
Tu=C\int e^{i\phi (x,y)}u(y)dy,\quad C>0,
}
be a generalized Bargmann transform where $\phi (x,y)$ is a quadratic form on 
${\bf C}^n\times{\bf C}^n$ with $\det \phi ''_{xy}\ne 0$, $\Im \phi ''_{yy}>0$, 
and with $C>0$ suitably chosen, so 
that $T$ is unitary $L^2({\bf R}^n)\to H_{\Phi }({\bf C}^n)={\rm Hol\,}
({\bf C}^n)\cap L^2(e^{-2\Phi (x)}L(dx))$, where $L(dx)$ denotes the Lebesgue 
measure on ${\bf C}^n$ and $\Phi $ is the strictly plurisubharmonic quadratic form given by 
\ekv{l2.9}
{
\Phi (x)=\sup_{y\in{\bf R}^n} -\Im \phi (x,y).
}
We know (\cite{Sj3}) that if $\Lambda _{\Phi }=\{ (x,{2\over i}{\partial \Phi 
\over \partial x});\, x\in{\bf C}^n\}$, then 
\ekv{l2.10}
{
\Lambda _{\Phi }=\kappa _{T}(E),
}
where 
\ekv{l2.11}
{
\kappa _T:{\bf C}^{2n}\simeq E^{\bf C}\ni (y,-\phi '_y(x,y))\to 
(x,\phi '_x(x,y))\in{\bf C}^{2n}
}
is the linear \ctf{} associated to $T$. Here ${\partial \over \partial x}=
{1\over 2}({\partial \over \partial \Re x}+{1\over i}
{\partial \over \partial \Im x})$, following standard conventions in 
complex analysis.

\par If $a\in S^0(E)$ we have an exact version of Egorov's theorem, saying 
that 
\ekv{l2.12}
{
Ta^wT\inv =\widetilde{a}^w,
}
where $\widetilde{a}\in S^0(\Lambda _{\Phi })$ is given by 
$\widetilde{a}\circ \kappa _T=a$. In \cite{Sj3} it is dicussed how to define 
and estimate the Weyl quantization of symbols on the Bargmann transform side, 
by means of almost \hol{} extensions and contour deformations. We retain from 
the proof of Proposition 1.2 in that paper that 
\ekv{l2.13}
{
\widetilde{a}^wu(x)=\int e^{\Phi (x)}K^{\rm eff}_{\widetilde{a}}(x,y) u(y)e^{-\Phi (y)}
L(dy),\ u\in H_\Phi ({\bf C}^n),
}
where the kernel is non-unique but can be chosen to satisfy
\ekv{l2.14}
{
K^{\rm eff}_{\widetilde{a}}(x,y)={\cal O}_N(1)\langle x-y\rangle ^{-N},
}
for every $N\ge 0$. (This immediately implies the Calder\'on-Vaillancourt 
theorem for the class ${\rm Op\,}(S^0(E))$.)

\par If $a\in {\cal S}(E)$, then for every $N\in {\bf N}$
\ekv{l2.15}
{
\vert K^{\rm eff}_{Ta^wT\inv}(x,y) \vert \le C_N(a)\langle x \rangle^{-N}\langle y \rangle
^{-N},\ x,y\in{\bf C}^n,
}
where $C_N(a)$ are seminorms in ${\cal S}$.

\par Identifying $x\in{\bf C}^n$ with $\kappa _T\inv (x,{2\over i}{\partial 
\Phi \over \partial x})\in E$, we can view $K^{\rm eff}_{Ta^wT\inv}$ as a function 
$K^{\rm eff}_{a^w}(x,y)$ on $E\times E$ and \no{l2.15} becomes
\ekv{l2.16}
{
\vert K^{\rm eff}_{a^w}(x,y) \vert \le C_N(a)\langle x \rangle^{-N}\langle y \rangle^{-N}, \ x,y\in E.
}

\par Now, let $b$ in \no{l2.7} be concentrated near 
$(x_0,x_0^*)=(J{y_0^*},x_0^*)\in E\times E^*$ with $a\in{\cal S}(E)$, where we let 
$J:E^*\to E$ be the map $y^*\mapsto H_{y^*}$ (and we shall prefer to write 
$Jy^*$ when we do not think of this quantity as a constant coefficient vector 
field). Then by \no{l2.5}--\no{l2.7}, we have 
\ekv{l2.17}
{
b=e^{-iy_0^*}\# e^{ix_0^*/2}\# a\# e^{ix_0^*/2}\#e^{iy_0^*},
}
\ekv{l2.18}
{
b^w=e^{-i(y_0^*)^w}\circ e^{i(x_0^*)^w/2}\circ a^w\circ e^{i(x_0^*)^w/2}
\circ e^{i(y_0^*)^w}.
}
\par
Now it is wellknown that if $z^*\in E^*$ then 
$e^{-i(z^*)^w}=(e^{-iz^*})^w$ is a unitary operator that can be viewed 
as a quantization of the phase space 
translation $E\ni x\mapsto x+H_{z^*}\in E$. On the Bargmann transform side 
these quantizations can be explicitly represented as magnetic translations, 
i.e. translations made unitary by multiplication by certain weights. In fact, 
let $\ell (x,\xi )=x_0^*\cdot x+x_0\cdot \xi $ be a linear form on 
${\bf C}^{2n}$ which is real on $\Lambda _\Phi $, so that
\ekv{l2.18.1}
{
x_0^*\cdot x+x_0\cdot{2\over i}{\partial \Phi \over\partial x}(x)\in {\bf R},\
 \forall x\in {\bf C}^n.
}
By essentially the same calculation as in the real setting, we see that 
$$
(e^{i\ell})^wu(x)=e^{ix_0^*\cdot (x+{1\over 2}x_0)}u(x+x_0),\ u\in H_\Phi ,
$$
and here we recall from the unitary and metaplectic equivalence with 
$L^2({\bf R}^n)$ (via $T$) that $(e^{i\ell})^w:H_\Phi \to H_\Phi $ is unitary, or equivalently that 
\ekv{l2.18.2}
{
-\Phi (x)+\Phi (x+x_0)+\Re \Big( ix_0^*\cdot (x+{1\over 2}x_0)
\Big) =0,\ \forall x\in 
{\bf C}^n.
}
(A simple calculation shows more directly the equivalence of \no{l2.18.1} and 
\no{l2.18.2}.) Notice also that if we identify $u$ with a function 
$\widetilde{u}(\rho )$ on 
$\Lambda _\Phi $ via the natural projection $(x,\xi )\mapsto x$, then 
$u(x+x_0)$ is identified with $\widetilde{u}(\rho +H_\ell )$, where the 
Hamilton field $H_\ell $ is viewed as a real constant vector field 
on $\Lambda _\Phi $.

\par It follows that $b^w$ has a kernel satisfying
$$\vert K^{\rm eff}_{b^w}(x,y) \vert=\vert K^{\rm eff}_{a^w}(x+{1\over 2}
J{x_0^*}-x_0,
y-{1\over 2}J{x_0^*}-x_0) \vert$$
and from \no{l2.16} we get 
\ekv{l2.19}
{
\vert K^{\rm eff}_{b^w}(x,y) \vert\le C_N(a)\langle x-(x_0-{1\over 2}J{x_0^*}) \rangle 
^{-N}\langle y-(x_0+{1\over 2}J{x_0^*}) \rangle 
^{-N},
}
so the kernel of $b^w$ is concentrated near $(x_0-{1\over 2}J{x_0^*}, 
x_0+{1\over 2}J{x_0^*})$.

\par Now, let $m$ be an order function on $E\times E^*$ and let $a\in 
\widetilde{S}(m)$. Choose a lattice $\Gamma \subset E\times E^*$ and a 
 partition of unity as in \no{sy.2} as well as a function 
$\psi \in{\cal S}(E\times E^*)$ as in Lemma \ref{sy3}. Write
\ekv{l2.20}
{
a=\sum_{\gamma \in\Gamma }a_\gamma ,\ a_\gamma =\psi _\gamma ^w 
\widetilde{a}_\gamma,\ \widetilde{a}_\gamma =\chi _\gamma ^wa, 
}
where $\Vert \widetilde{a}_\gamma \Vert\le Cm(\gamma )$. Then, using that
$\psi _0^w$ is continuous: $L^2(E)\to {\cal S}(E)$, we see that
 $a_\gamma $ is 
concentrated near $\gamma $ in the above sense and more precisely, 
\ekv{l2.21}
{
\vert K^{\rm eff}_{a^w}(x,y) \vert\le C_Nm(\gamma )\langle x- 
(\gamma _x-{1\over 2}
J{\gamma_{x^*}  })\rangle ^{-N}\langle y- (\gamma _x+{1\over 2}
J{\gamma_{x^*}  })\rangle ^{-N},\ x,y\in E,
}
where we write $\gamma =(\gamma _x,\gamma _{x^*} )\in E\times E^*$. 

\par Let $q (x,y)=({x+y\over 2},J\inv (y-x))=(q_x(x,y),q_{x^*}(x,y))$, 
 so that 
$$
q\inv (\gamma )=(\gamma _x-{1\over 2}J\gamma _x,\gamma _x+
{1\over 2}J\gamma _x)
,$$
and hence
$$\langle q (x,y)-\gamma  \rangle\le {\cal O}(1)\langle x-(\gamma  _x-
{1\over 2}J{\gamma  _{x^*} }) \rangle\langle y-(\gamma  _x+
{1\over 2}J{\gamma  _{x^*} }) \rangle,$$
so \no{l2.21} implies 
\eekv{l2.22}
{
\vert K^{\rm eff}_{a_\gamma ^w}(x,y) \vert&\le& C_N(a)m(\gamma )\langle q (x,y)-
\gamma  \rangle^{-N}
}
{&\le& \widetilde{C}_N(a)m(q(x,y))\langle q (x,y)-\gamma  \rangle 
^{N_0-N},
}
where we used that $m$ is an order function in the last inequality. Choose $N$ 
with $N_0-N<-4n$, sum over $\gamma $ and use \no{l2.20} to get 
\ekv{l2.23}
{
\vert K^{\rm eff}_{a^w}(x,y) \vert \le C(a)m(q (x,y))=C(a)m({x+y\over 2}, 
J\inv (y-x)),\ x,y\in E.
}
We get
\begin{theo}\label{l21}
Let $a\in\widetilde{S}(m)$, where $m$ is an order function on $E\times E^*
$, $E=T^*{\bf R}^n$. Then $a^w$ has an effective kernel (rigorously defined 
after applying a Bargmann transform as above) satisfying \no{l2.23}, where 
$C(a)$ is a $\widetilde{S}(m)$ norm of $a$. In particular, if $M(x,y)=
m({x+y\over 2},J\inv (y-x))$ is the kernel of an $L^2(E)$-bounded operator, 
then $a^w$ is bounded: $L^2({\bf R}^n)\to L^2({\bf R}^n)$.
\end{theo}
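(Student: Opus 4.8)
The plan is to establish the two assertions in turn: the pointwise kernel bound \no{l2.23}, and then the $L^2$-boundedness as its consequence. The first assertion is essentially assembled in the discussion preceding the statement, so I would only organize it. Decompose $a=\sum_{\gamma \in \Gamma }a_\gamma $ with $a_\gamma =\psi _\gamma ^w\chi _\gamma ^wa$ as in \no{l2.20}, using the partition from Lemma \ref{sy3}, and record that $\Vert \chi _\gamma ^wa\Vert \le \Vert a\Vert _{\widetilde S(m)}\,m(\gamma )$ by Definition \ref{sy1}. Each $a_\gamma $ is concentrated near $\gamma $, so the effective-kernel estimate \no{l2.16} for Schwartz symbols, combined with the magnetic-translation computation \no{l2.17}--\no{l2.19}, yields the localized bound \no{l2.21} with a constant proportional to $\Vert a\Vert _{\widetilde S(m)}$. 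Turning the weight $m(\gamma )$ into $m(q(x,y))$ via the order-function inequality \no{sy.1} gives \no{l2.22}, and summing over the rank-$4n$ lattice $\Gamma \subset E\times E^*$ after choosing $N$ with $N_0-N<-4n$ produces \no{l2.23} with $C(a)$ a fixed multiple of $\Vert a\Vert _{\widetilde S(m)}$. The only care needed here is the uniform convergence of the lattice sum and the bookkeeping that the constant is genuinely governed by the single $\widetilde S(m)$-norm.

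For the boundedness I would pass through the generalized Bargmann transform $T$ of \no{l2.8}. As $T$ is unitary from $L^2({\bf R}^n)$ onto $H_\Phi $, the operator $a^w$ is unitarily equivalent to $\widetilde a^w=Ta^wT\inv $ on $H_\Phi $, and by \no{l2.13} the latter acts on $H_\Phi $ through the kernel $K^{\rm eff}_{a^w}$ against the measure $e^{-2\Phi (y)}L(dy)$, with the extra factor $e^{\Phi (x)}$. Substituting $v=e^{-\Phi }u$ identifies $H_\Phi $ isometrically with a subspace of $L^2({\bf C}^n,L(dx))$ and conjugates $\widetilde a^w$ into the plain integral operator ${\cal K}$ with kernel $K^{\rm eff}_{a^w}(x,y)$, now acting on all of $L^2({\bf C}^n,L(dx))$. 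Under the identification ${\bf C}^n\simeq E$ of \no{l2.10}--\no{l2.11}, which carries Lebesgue measure to Lebesgue measure, ${\cal K}$ becomes an integral operator on $L^2(E)$ with $\Vert a^w\Vert _{L^2({\bf R}^n)\to L^2({\bf R}^n)}\le \Vert {\cal K}\Vert _{L^2(E)\to L^2(E)}$. By \no{l2.23} its kernel is dominated pointwise by $C(a)M(x,y)$ with $M\ge 0$, so everything reduces to a positive integral operator on $L^2(E)$.

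The last step is the elementary domination principle for positive kernels. For $f,g\in L^2(E)$, since $M\ge 0$,
$$\vert \langle {\cal K}f,g\rangle \vert \le C(a)\iint M(x,y)\,\vert f(y)\vert \,\vert g(x)\vert \,dy\,dx=C(a)\,\langle T_M\vert f\vert ,\vert g\vert \rangle ,$$
where $T_M$ denotes the integral operator with kernel $M$. If $T_M$ is bounded on $L^2(E)$, the right-hand side is at most $C(a)\Vert T_M\Vert \,\Vert f\Vert \,\Vert g\Vert $, whence ${\cal K}$, and therefore $a^w$, is bounded. I expect this positivity estimate to be short and entirely robust; the step I would watch most closely is instead the reduction of the previous paragraph, i.e.\ the claim that the Bargmann-side effective kernel of \cite{Sj3}, once the weights $e^{\pm \Phi }$ are absorbed, is honestly the kernel of an operator on $L^2(E)$ controlling the norm of $a^w$. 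That identification, resting on the exact Egorov relation \no{l2.12} and on the unitarity of $T$ and of the magnetic translations on $H_\Phi $, is the genuine content, far more than the concluding domination.
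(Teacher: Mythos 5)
Your proposal is correct and follows essentially the same route as the paper: the kernel bound \no{l2.23} is obtained exactly as in the discussion preceding the theorem (decomposition \no{l2.20}, localized estimates \no{l2.21}--\no{l2.22}, summation over $\Gamma$ with $N_0-N<-4n$), and the $L^2$-boundedness is the intended consequence of unitarity of $T$, the isometry $u\mapsto e^{-\Phi}u$ from $H_\Phi$ into $L^2(L(dx))$, and domination of the effective kernel by $C(a)M$. The paper leaves this last reduction implicit after stating \no{l2.13}--\no{l2.14}; your spelled-out version of it is the correct reading.
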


\par As mentioned in the introduction, the statement on $L^2$-boundedness 
here is due to 
Bony \cite{Bon1}, who obtained it in a rather different way. A 
calculation, similar to the one leading to \no{l2.23}, has been given by 
Gr\"ochenig \cite{Gr2}.

\begin{cor}\label{l22}
If $M$ is the kernel of a Shur class operator i.e. if 
$$\sup_x\int m({x+y\over 2},J\inv (y-x))dy,\ \sup_y\int m({x+y\over 2},J
\inv (y-x))dx \,\, 
<\infty ,$$
then $a^w$ is bounded: $L^2({\bf R}^n)\to L^2({\bf R}^n)$.
\end{cor}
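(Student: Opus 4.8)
The plan is to deduce the corollary directly from Theorem \ref{l21} by means of the classical Schur test. By that theorem, $a^w$ has an effective kernel dominated by $M(x,y)=m({x+y\over 2},J\inv (y-x))$ as in \no{l2.23}, and the last sentence of the theorem reduces everything to showing that $M$ itself is the kernel of a bounded operator on $L^2(E)$. The hypotheses of the corollary are tailored precisely to make this an instance of Schur's lemma, so the whole argument is a short invocation of that lemma.

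First I would recall the Schur test in the symmetric form: if $K(x,y)$ is a measurable kernel on $E\times E$ with
$$A:=\sup_x\int \vert K(x,y)\vert \,dy<\infty,\qquad B:=\sup_y\int \vert K(x,y)\vert\, dx<\infty,$$
then the integral operator $u\mapsto \int K(\cdot ,y)u(y)\,dy$ is bounded on $L^2(E)$ with $\Vert K\Vert_{{\cal L}(L^2,L^2)}\le \sqrt{AB}$. The standard proof factors $\vert K\vert =\vert K\vert^{1/2}\cdot \vert K\vert^{1/2}$, applies Cauchy--Schwarz in the $y$-integral, and then integrates in $x$; it uses nothing beyond measurability and finiteness of the two marginals. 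Since here the ambient measure is Lebesgue measure on $E=T^*{\bf R}^n\simeq{\bf R}^{2n}$, this is exactly the setting we need.

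Next I would observe that, because $m$ is an order function, it takes values in $]0,\infty [$, so $M=\vert M\vert$ is nonnegative and measurable. The two quantities displayed in the statement of the corollary are then literally $\sup_x\int M(x,y)\,dy$ and $\sup_y\int M(x,y)\,dx$, which are finite by hypothesis. Applying the Schur test with $K=M$ shows that $M$ is the kernel of a bounded operator on $L^2(E)$, and hence, by Theorem \ref{l21}, that $a^w:L^2({\bf R}^n)\to L^2({\bf R}^n)$ is bounded. If desired one also records the quantitative bound $\Vert a^w\Vert\le C(a)\sqrt{AB}$, where $C(a)$ is the $\widetilde{S}(m)$-norm of $a$ appearing in \no{l2.23}.

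I do not expect any genuine obstacle here: the corollary is a formal consequence of Theorem \ref{l21} together with a textbook boundedness criterion. The only point that requires a moment's care is noticing that the positivity of the order function lets us drop the absolute values, so that the integrability conditions assumed in the statement coincide verbatim with the two Schur marginals of $M$; once this identification is made, the conclusion is immediate.
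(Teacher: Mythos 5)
Your argument is correct and is exactly the one the paper intends: the corollary is stated without a separate proof precisely because it is the Schur test applied to the majorant kernel $M(x,y)=m(\frac{x+y}{2},J^{-1}(y-x))$, followed by the last sentence of Theorem \ref{l21}. Your observation that positivity of the order function makes the two displayed integrals literally the Schur marginals is the only point needing care, and you handle it correctly.
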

\begin{cor}\label{l23}
Assume $m(x,{x^*} )=m({x^*} )$ is independent of $x$, for $(x,{x^*} )
\in E\times E^*$ and $m({x^*} )\in L^1(E^*)$, then 
$a^w$ is bounded: $L^2({\bf R}^n)\to L^2({\bf R}^n)$.
\end{cor}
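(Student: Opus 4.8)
The plan is to reduce the statement to Corollary \ref{l22} by verifying the Schur test for the majorant kernel. When $m(x,x^*)=m(x^*)$ is independent of $x$, the bound \no{l2.23} for the effective kernel of $a^w$ is governed by
\[
M(x,y)=m\Big(J\inv(y-x)\Big),
\]
which depends on $x,y\in E$ only through the difference $y-x$. Thus $M$ is a convolution kernel, and the two suprema appearing in Corollary \ref{l22} collapse: fixing $x$ and substituting $z=y-x$ in the first integral, respectively fixing $y$ and substituting $z=y-x$ in the second (the map $x\mapsto y-x$ being a measure-preserving reflection of $E$), one finds that both equal the single quantity $\int_E m\big(J\inv z\big)\,dz$, independently of the frozen variable.

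It then remains only to check that this integral is finite. Here I would use that $J\colon E^*\to E$ is a linear isomorphism — it is the Hamilton map $y^*\mapsto H_{y^*}$ associated to the nondegenerate symplectic form — so that $\det J\ne 0$ and $J\inv$ is a genuine linear change of variables on $E$. Substituting $w=J\inv z$, i.e.\ $z=Jw$ and $dz=\vert\det J\vert\,dw$, gives
\[
\int_E m\big(J\inv z\big)\,dz=\vert\det J\vert\int_{E^*}m(w)\,dw=\vert\det J\vert\,\Vert m\Vert_{L^1(E^*)}<\infty ,
\]
by the hypothesis $m\in L^1(E^*)$. Hence both Schur integrals in Corollary \ref{l22} are finite, and that corollary yields the $L^2({\bf R}^n)\to L^2({\bf R}^n)$ boundedness of $a^w$.

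I expect no genuine analytic obstacle here: once Theorem \ref{l21} and Corollary \ref{l22} are in hand, the corollary is essentially a change of variables exploiting the translation invariance of $M$. The only point that deserves a word is the Jacobian factor $\vert\det J\vert$, which must be a finite nonzero constant; this is exactly the content of $J$ being a linear symplectic isomorphism, and it is the single place where the structure of $J$ (rather than mere measurability of $m$) enters.
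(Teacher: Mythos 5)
Your proposal is correct and is exactly the argument the paper intends: Corollary \ref{l23} is stated as an immediate consequence of Corollary \ref{l22}, since for $m=m(x^*)$ the majorant kernel $M(x,y)=m(J^{-1}(y-x))$ is a convolution kernel whose Schur integrals both reduce to $\vert\det J\vert\,\Vert m\Vert_{L^1(E^*)}<\infty$. Your explicit verification of the change of variables and the nondegeneracy of $J$ fills in precisely the routine details the paper omits.
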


\section
{Composition}\label{co}
\setcounter{equation}{0}

Let $a,b\in {\cal S}(E)$, $E={\bf R}^n\times ({\bf R}^n)^*$, $(x_0,x_0^*),\, 
(y_0,y_0^*)\in E\times E^*$ and consider the Weyl composition of the two symbols 
$e^{x\cdot x_0^*}a(x-x_0)$, $e^{x\cdot y_0^*}b(x-y_0)$ , concentrated near 
$(x_0,x_0^*)$ and 
$(y_0,y_0^*)$ respectively:
\ekv{co.1}
{
e^{{i\over 2}\sigma (D_x,D_y)}(e^{x\cdot x_0^*}a(x-x_0)e^{y\cdot y_0^*}
b(y-y_0))(z,z).
}
We work in canonical coordinates $x\simeq (x,\xi )$ and identify $E$ and $E^*$. 
Then 
$$\sigma (x^*,y^*)=Jx^*\cdot y^*,\ J=\pmatrix{0 &1\cr -1 &0},\ {\trans J}=-J,\ 
J^2=-1,$$
and $e^{{i\over 2}\sigma (D_x,D_y)}$ is convolution with $k$, given by
$$k(x,y)={1\over (2\pi )^{2n}}\iint e^{i(x\cdot x^*+y\cdot y^*+{1\over 2}Jx^*\cdot y^*)}dx^*dy^*.$$ The phase $\Phi =x\cdot x^*+y\cdot y^*+{1\over 2}Jx^*\cdot y^*$ has a unique nondegenerate critical point $(x^*,y^*)=(2Jy,-2Jx)$ and the corresponding critical value is equal to $-2\sigma (x,y)=-2Jx\cdot y$. Hence
$k=Ce^{-2i\sigma (x,y)}=Ce^{-2iJx\cdot y}$ for some (known) constant $C$. 

\par The composition \no{co.1} becomes
\eekv{co.2}
{
&&C\iint e^{i(-2J(z-x)\cdot (z-y)+x\cdot x_0^*+y\cdot y_0^*)}a(x-x_0)b(y-y_0)dxdy=
}
{
&&Ce^{iz\cdot (x_0^*+y_0^*)}\iint 
e^{i(-2Jx\cdot y+x\cdot x_0^*+y\cdot y_0^*)}a(x+z-x_0)b(y+z-y_0)dxdy.
}
The exponent in the last integral can be rewritten as 
$$
-2Jx\cdot y+x\cdot x_0^*+y\cdot y_0^*=
-2J(x-{1\over 2}J\inv y_0^*)\cdot (y+{1\over 2}J\inv x_0^*)+{1\over 2}Jx_0^*\cdot y_0^*,
$$
and the composition \no{co.1} takes the form
$ 
e^{iz\cdot (x_0^*+y_0^*)}d(z)
$, where
$$
d(z)=Ce^{{i\over 2}\sigma (x_0^*,y_0^*)}\iint e^{-2i\sigma (x,y)}
a(x+z-(x_0+{1\over 2}
Jy_0^*))b(y+z-(y_0-{1\over 2}Jx_0^*))dxdy.
$$
Since $\sigma (x,y)$ is a nondegenerate quadratic form, we have for every $N\ge 0$
by integration by parts,
$$\vert d(z) \vert\le C_N\iint \langle (x,y)\rangle^{-N}\langle x+z-
(x_0+{1\over 2}Jy_0^*) \rangle^{-N}\langle y+z-
(y_0-{1\over 2}Jx_0^*) \rangle^{-N}dxdy.$$
Hence for every $N\ge 0$,
$$
\vert d(z) \vert\le C_N \langle z-(x_0+{1 \over 2}Jy_0^*) \rangle
^{-N}\langle z-(y_0-{1 \over 2 }Jx_0^*) \rangle ^{-N}.
$$
Using the triangle inequality, we get
$$(1+\vert z-a \vert)(1+\vert z-b\vert)\ge 1+\vert z-a \vert +\vert z-b \vert
\ge 1+{1 \over 2}\vert a-b \vert +\vert z-{a+b \over 2} \vert ,$$
so
$$
(1+\vert z-a \vert)(1+\vert z-b\vert)\ge{1 \over C}(1+\vert a-b \vert )^{1/2}
(1+\vert z-{a+b \over 2}\vert )^{1/2} 
$$
and hence for every $N\ge 0$,
\ekv{co.3}
{
\vert d(z) \vert\le C_N\langle (x_0+{1 \over 2}Jx_0^*)-(y_0-{1\over 2}Jy_0^*) 
\rangle^{-N}\langle z-{1\over 2}(x_0-{1\over 2}Jx_0^*+y_0+{1\over 2}Jy_0^*)
 \rangle^{-N}.
}
Clearly, we have the same estimates for the derivatives of $d(z)$. It follows that the composition \no{co.1} is equal to $e^{iz\cdot z_0^*}c(z-z_0)$, where 
\ekv{co.4}
{
z_0^*=x_0^*+y_0^*,\ z_0={1\over 2}(x_0-{1\over 2}Jx_0^*+y_0+{1\over 2}Jy_0^*),
}
and where $c\in{\cal S}$ and for every seminorm $p$ on ${\cal S}$ and every 
$N$, there is a seminorm $q$ on ${\cal S}$ such that
\ekv{co.5}
{
p(c)\le \langle (x_0+{1\over 2}Jx_0^*)-(y_0-{1\over 2}Jy_0^*) \rangle^{-N}q(a)q(b).
}

\par It follows that :
$$
e^{iz\cdot z_0^*}c(z-z_0)\in\widetilde{S}(\langle \cdot -(z_0,z_0^*) 
\rangle^{-M})
$$
with corresponding norm bounded by 
$$
q_{N,M}(a)q_{N,M}(b)\langle (x_0+{1\over 2}Jx_0^*)-(y_0-{1\over 2}Jy_0^*) 
\rangle ^{-N},
$$
for all $N,M\ge 0$ where $q_{N,M}$ are suitable seminorms on ${\cal S}$.

\par If $a_1\in \widetilde{S}(m_1)$, $a_2\in \widetilde{S}(m_2)$ then 
$c=a_1\# a_2$ is 
well-defined and belongs to $\widetilde{S}(m_3^{(N)})$ provided that the 
integrals defining $m_3^{(N)}$ and $m_3$ below converge. Here (replacing 
summation over lattices by integration)
\eekv{co.6}
{
m_3^{(N)}(z,z^*)&=&\iiiint \langle z^*-(x^*+y^*) \rangle^{-N} 
\langle z-{1\over 2}(x-{1\over 2}Jx^*+y+{1\over 2}Jy^*) \rangle^{-N}}
{ 
&&\times \langle (x+{1\over 2}Jx^*)-(y-{1\over 2}Jy^*)  \rangle^{-N} 
m_1(x,x^*)m_2(y,y^*)dxdydx^*dy^*
}
In order to understand the integral \no{co.6}, we put 
$\widetilde{x}={1\over 2}Jx^*$, $\widetilde{y}={1\over 2}Jy^*$, 
$\widetilde{z}={1\over 2}Jz^*$, and study the set $\Sigma (z,z^*)$ where the 
arguments inside the three brackets vanish simultaneously:
$$
\cases{\widetilde{x}+\widetilde{y}=\widetilde{z},\cr x+y-\widetilde{x}+
\widetilde{y}=2z,\cr x-y+\widetilde{x}+\widetilde{y}=0,}
$$
which can be transformed to 
\ekv{co.8}
{\Sigma (z,z^*):\
\cases{
\widetilde{x}-x=\widetilde{z}-z,\cr
\widetilde{y}+y=\widetilde{z}+z,\cr
\widetilde{x}+\widetilde{y}=\widetilde{z}.
}
}
Now it is clear that for every $M>0$ there is an $N>0$ such that
\ekv{co.9}
{
m_3^{(N)}(z,z^*)\le {\cal O}(1)\iiiint {\rm dist\,}(x,x^*,y,y^*;\Sigma (z,z^*))
^{-M}m_1(x,x^*)m_2(y,y^*)dxdydx^*dy^*.
}
Since $m_1$, $m_2$ are order functions, we have
\begin{eqnarray*}
m_1(x,x^*)&\le& {\cal O}(1){\rm dist\,}(x,x^*,y,y^*;\Sigma (z,z^*))^{N_0}m_1
(\Pi^{(1)} _\Sigma (x,x^*,y,y^*))\\
m_2(y,y^*)&\le& {\cal O}(1){\rm dist\,}(x,x^*,y,y^*;\Sigma (z,z^*))^{N_0}m_2
(\Pi^{(2)} _\Sigma (x,x^*,y,y^*)), 
\end{eqnarray*}
where $\Pi_\Sigma  :(E\times E^*)^2\to \Sigma (z,z^*)$ is the affine 
orthogonal projection and we write $\Pi _\Sigma (x,x^*;y,y^*)=
(\Pi^{(1)} _\Sigma (x,x^*;y,y^*),\Pi^{(2)} _\Sigma (x,x^*;y,y^*))
$. We conclude that for $N$ large enough,
\ekv{co.10}
{
m_3^{(N)}(z,z^*)\le {\cal O}(1)m_3(z,z^*),
}
where
\ekv{co.11}
{
m_3(z,z^*)=\int_{\Sigma (z,z^*)}m_1(x,x^*)m_2(y,y^*)d\Sigma 
}
or more explicitly,
\ekv{co.12}
{
m_3(z,z^*)=\int_{{{1\over 2}Jx^*-x={1\over 2}Jz^*-z\atop
{1\over 2}Jy^*+y={1\over 2}Jz^*+z
}\atop x^*+y^*=z^*}m_1(x,x^*)m_2(y,y^*))dx.
}
Reversing the above estimates, we see that $m_3(z,z^*)\le {\cal O}
(1)m_3^{(N)}(z,z^*)$, if $N>0$ is large enough.

\begin{prop}\label{co1}
If the integral in \no{co.11} converges for one value of $(z,z^*)$, then it 
converges for all values and defines an order function $m_3$.
\end{prop}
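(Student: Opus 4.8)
The plan is to exploit the fact that the integration domains $\Sigma (z,z^*)$ in \no{co.8} are all affine translates of one fixed linear subspace, and that this translate depends linearly on $(z,z^*)$; the order function property of $m_1$ and $m_2$ then converts the translation into a polynomial factor, which yields both assertions at once.

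First I would work with the explicit form \no{co.12}, in which the constraints \no{co.8} have already been solved so that the integral runs over $x\in E$ alone, the remaining variables $x^*,y,y^*$ being determined by $x$ and $(z,z^*)$. Solving the linear system \no{co.8} shows that $(x^*,y,y^*)$, and hence the full point $((x,x^*),(y,y^*))\in (E\times E^*)^2$, is an affine function of $(x,z,z^*)$ whose dependence on $(z,z^*)$ is by translation only, the linear part in $x$ being independent of $(z,z^*)$. Concretely, if one replaces $(z,z^*)$ by a second point $(w,w^*)$ while keeping the integration variable $x$ fixed, the arguments $(x,x^*)$ and $(y,y^*)$ of $m_1$ and $m_2$ are shifted by one and the same vector
$$ v=v\big((w,w^*)-(z,z^*)\big)\in (E\times E^*)^2, $$
which is independent of $x$ and depends linearly on the increment $(w,w^*)-(z,z^*)$. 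Writing $v=(v_1,v_2)$ with $v_1,v_2\in E\times E^*$ acting on the arguments of $m_1$ and $m_2$ respectively, \no{co.12} gives
$$ m_3(w,w^*)=\int m_1\big((x,x^*)+v_1\big)\,m_2\big((y,y^*)+v_2\big)\,dx, $$
with no Jacobian, since $x$ itself is not changed.

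Next I would insert the order function inequality \no{sy.1} for $m_1$ and $m_2$ separately, in the form $m_j(\rho +v_j)\le C_0\langle v_j\rangle ^{N_0}m_j(\rho )$, and pull the constant factors outside the integral, obtaining
$$ m_3(w,w^*)\le C_0^2\,\langle v_1\rangle ^{N_0}\langle v_2\rangle ^{N_0}\,m_3(z,z^*). $$
Since $v$ is linear in $(w,w^*)-(z,z^*)$, we have $\langle v_j\rangle \le C\langle (w,w^*)-(z,z^*)\rangle $, and therefore
$$ m_3(w,w^*)\le C'\,\langle (w,w^*)-(z,z^*)\rangle ^{2N_0}\,m_3(z,z^*) $$
for a constant $C'$ independent of the two points. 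This single estimate proves both claims simultaneously: it shows that finiteness of $m_3$ at one point forces finiteness at every other point, so that convergence at one $(z,z^*)$ propagates to all of $E\times E^*$; and, once finiteness is known, it is exactly the order function inequality \no{sy.1} for $m_3$, with exponent $2N_0\ge 1$ and a new constant. Positivity of $m_3$ is clear because $m_1,m_2>0$, so the integral of a strictly positive integrand is $>0$.

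The computation is essentially routine, and I do not expect a genuine obstacle; the only point requiring a little care is the bookkeeping that identifies the shift $v$ and checks that it is $x$-independent and linear in $(z,z^*)$ --- equivalently, that the measure $d\Sigma $ in \no{co.11} is the common translation invariant measure obtained by pushing forward $dx$ under the affine parametrizations, so that no Jacobian enters when one passes from \no{co.11} to \no{co.12} and compares different parameter values. Making the translate structure of the family $\{ \Sigma (z,z^*)\}$ explicit at the outset removes any difficulty here.
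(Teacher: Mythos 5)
Your proposal is correct and follows essentially the same route as the paper: the paper exhibits the explicit measure-preserving translation $\Sigma (z,z^*)\ni (x,x^*,y,y^*)\mapsto (x+t,x^*+t^*,y+\frac{1}{2}Jt^*+t,y^*)\in \Sigma (z+t,z^*+t^*)$, applies the order-function inequality to $m_1$ and $m_2$ under these $x$-independent shifts, and obtains $m_3(z+t,z^*+t^*)\le \widetilde{C}\langle (t,t^*)\rangle ^{2N_0}m_3(z,z^*)$, exactly your estimate. The only difference is presentational: you describe the translate structure abstractly, while the paper simply writes down the shift vectors $(t,t^*)$ and $(t+\frac{1}{2}Jt^*,0)$ explicitly.
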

\begin{proof}
Suppose the integral converges for the value $(z,z^*)$ and consider any other 
value $(z+t,z^*+t^*)$. We have the measure preserving map
$$
\Sigma (z,z^*)\ni (x,x^*,y,y^*)\mapsto (x+t,x^*+t^*,y+{1\over 2}Jt^*+t,y^*)
\in \Sigma (z+t,z^*+t^*),
$$
so
\begin{eqnarray*}
m_3(z+t,z^*+t^*)&=&\int_{\Sigma (z,z^*)}m_1(x+t,x^*+t^*)
m_2(y+{1\over 2}Jt^*+t,y^*)dx\\
&\le& C\langle (t,t^*) \rangle^{N_0}\langle t+{J\over 2}t^* \rangle^{N_0} 
m_3(z,z^*)\\
&\le& \widetilde{C}\langle (t,t^*) \rangle^{2N_0}m_3(z,z^*).
\end{eqnarray*}
The proposition follows.
\end{proof}

\par From the above discussion, we get
\begin{theo}\label{co2}
Let $m_1$, $m_2$ be order functions on $E\times E^*$ and define $m_3$ by \no{co.12}. Assume that $m_3(z,z^*)$ is finite for at least one $(z,z^*)$ so that 
$m_3$ is a well-defined order function by Proposition \ref{co1}. Then the 
composition map
\ekv{co.13}{
{\cal S}(E)\times{\cal S}(E)\ni (a_1,a_2)\mapsto a_1\# a_2\in {\cal S}(E) 
}
has a bilinear  extension
\ekv{co.14}{
\widetilde{S}(m_1)\times \widetilde{S}(m_2)\ni (a_1,a_2)\mapsto a_1\# a_2\in \widetilde{S}(m_3),}
Moreover,
\ekv{co.15}
{
\Vert a_1\# a_2\Vert_{\widetilde{S}(m_3)}\le {\cal O}(1)
\Vert a_1\Vert_{\widetilde{S}(m_1)}\Vert a_2\Vert_{\widetilde{S}(m_2)}.
}
\end{theo}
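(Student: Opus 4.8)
The plan is to reduce the statement, via Definition \ref{sy1} and Proposition \ref{sy2}, to the single uniform bound $\Vert \chi_\zeta^w(a_1\# a_2)\Vert \le \mathcal{O}(1)\,m_3(\zeta)\,\Vert a_1\Vert_{\widetilde{S}(m_1)}\Vert a_2\Vert_{\widetilde{S}(m_2)}$ for all $\zeta\in\Gamma$, and to obtain it by decomposing both symbols into the phase--space building blocks already used in Section \ref{l2}. Writing $a_j=\sum_{\gamma}a_{j,\gamma}$ with $a_{j,\gamma}=\psi_\gamma^w\chi_\gamma^w a_j$ as in \no{l2.20}, I would first record that each $a_{j,\gamma}$ has the normal form $a_{j,\gamma}(x)=e^{ix\cdot\gamma_{x^*}}\alpha_{j,\gamma}(x-\gamma_x)$, where $\alpha_{j,\gamma}\in\mathcal{S}(E)$ is obtained by translating and modulating back to the origin by means of \no{l2.5}--\no{l2.7}. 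The essential point here is a bound uniform in $\gamma$: since $\psi_0^w$ maps $L^2(E)$ continuously into $\mathcal{S}(E)$ and is carried to $\psi_\gamma^w$ by the unitary phase--space translation, every Schwartz seminorm of $\alpha_{j,\gamma}$ is $\le\mathcal{O}(1)\Vert\chi_\gamma^w a_j\Vert\le\mathcal{O}(1)\,m_j(\gamma)\,\Vert a_j\Vert_{\widetilde{S}(m_j)}$.

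Next I would expand $a_1\# a_2=\sum_{\gamma,\delta}a_{1,\gamma}\# a_{2,\delta}$ and analyse each term by the computation already carried out in \no{co.1}--\no{co.5}, applied with $(x_0,x_0^*)=\gamma$ and $(y_0,y_0^*)=\delta$. That computation shows that $a_{1,\gamma}\# a_{2,\delta}$ is again a building block $e^{iz\cdot z_0^*}c_{\gamma\delta}(z-z_0)$ centred at the point $(z_0,z_0^*)$ of \no{co.4}, lying in $\widetilde{S}(\langle\cdot-(z_0,z_0^*)\rangle^{-M})$ with norm bounded, after inserting the seminorm control of $\alpha_{1,\gamma}$ and $\alpha_{2,\delta}$, by
$$\mathcal{O}(1)\,m_1(\gamma)m_2(\delta)\,\Vert a_1\Vert_{\widetilde{S}(m_1)}\Vert a_2\Vert_{\widetilde{S}(m_2)}\,\langle(\gamma_x+{1\over 2}J\gamma_{x^*})-(\delta_x-{1\over 2}J\delta_{x^*})\rangle^{-N}.$$
Applying $\chi_\zeta^w$, using the localization $\Vert\chi_\zeta^w(\cdot)\Vert\le(\text{norm})\,\langle\zeta-(z_0,z_0^*)\rangle^{-M}$ together with the triangle inequality, I would arrive at
$$\Vert\chi_\zeta^w(a_1\# a_2)\Vert\le\mathcal{O}(1)\,\Vert a_1\Vert_{\widetilde{S}(m_1)}\Vert a_2\Vert_{\widetilde{S}(m_2)}\sum_{\gamma,\delta}m_1(\gamma)m_2(\delta)\,\langle(\gamma_x+{1\over 2}J\gamma_{x^*})-(\delta_x-{1\over 2}J\delta_{x^*})\rangle^{-N}\langle\zeta-(z_0,z_0^*)\rangle^{-M}.$$

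Splitting the last bracket (for $M\ge 2N$) into its $\zeta_x$ and $\zeta_{x^*}$ parts recovers exactly the three bracket factors in \no{co.6}, so the sum is the lattice Riemann sum of the integrand defining $m_3^{(N)}(\zeta)$. Comparing sum with integral — legitimate for $N$ large, since the order functions and the bracket factors vary slowly over a fundamental domain — bounds it by $\mathcal{O}(1)m_3^{(N)}(\zeta)$, and then \no{co.10} bounds this by $\mathcal{O}(1)m_3(\zeta)$. This is precisely \no{co.15}.

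The step that needs the most care — and which the norm estimate is designed to furnish — is the passage from this formal rearrangement to an honest bilinear map on all of $\widetilde{S}(m_1)\times\widetilde{S}(m_2)$. The bound just derived shows that $\sum_{\gamma,\delta}a_{1,\gamma}\# a_{2,\delta}$ is absolutely summable against $m_3$, so its partial sums converge in $\widetilde{S}(m_3)$ and one may define $a_1\# a_2$ to be the limit; it then remains to check that the result is independent of the chosen lattice and partition of unity and that it agrees with the classical Weyl composition on $\mathcal{S}(E)\times\mathcal{S}(E)$, which yields the bilinear extension \no{co.14}. The only other genuinely delicate point is the uniform-in-$\gamma$ control of the block profiles $\alpha_{j,\gamma}$ from the first paragraph; once that, and the already--established estimates \no{co.5} and \no{co.10}, are in hand, the remainder is bookkeeping.
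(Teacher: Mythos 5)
Your proposal is correct and follows essentially the same route as the paper: the explicit computation \no{co.1}--\no{co.5} for a pair of phase-space building blocks, the lattice decomposition $a_j=\sum_\gamma \psi_\gamma^w\chi_\gamma^w a_j$ of Lemma \ref{sy3} with uniform Schwartz-seminorm control of the recentred profiles, and the comparison of the resulting double lattice sum with the integral \no{co.6} defining $m_3^{(N)}$, followed by \no{co.10}. The paper compresses all of this into the phrase ``replacing summation over lattices by integration,'' so your write-up merely makes explicit the steps the paper leaves implicit.
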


\par We end this section by establishing a connection with the effective 
kernels of Section \ref{l2}. Let $a_j$ be as in the theorem with 
$a_3=a_1\# a_2$. According to Theorem \ref{l21}, we then know that $a_j^w$ 
has an effective kernel $K_j=K^{\rm eff}_{a_j^w}(x,y)$ satisfying 
\ekv{co.15.5}
{
K_j(x,y)={\cal O}(1)m_j(q (x,y)),\hbox{ where } q (x,y)=
({x+y\over 2},J\inv (y-x)).
}
Since the composition of the effective kernels of $a_1^w$ and $a_2^w$ is an 
effective kernel for $a_3^w=a_1^w\circ a_2^w$ we expect that 
\ekv{co.16}
{
m_3(q (\widetilde{x},\widetilde{y}))= C\int 
m_1(q (\widetilde{x},\widetilde{z}))m_2(q (\widetilde{z},
\widetilde{y}))d\widetilde{z},
}
or more explicitly,
\ekv{co.17}
{
m_3({\widetilde{x}+\widetilde{y}\over 2},J\inv (\widetilde{y}-\widetilde{x}))
=C\int 
m_1({\widetilde{x}+\widetilde{z}\over 2},J\inv (\widetilde{z}-\widetilde{x}))
m_2({\widetilde{z}+\widetilde{y}\over 2},J\inv (\widetilde{y}-\widetilde{z}))
d\widetilde{z},
}
Writing 
\begin{eqnarray*}
z&=&{\widetilde{x}+\widetilde{y}\over 2},\\
z^*&=&J\inv (\widetilde{y}-\widetilde{x}),\\
x&=&{\widetilde{x}+\widetilde{z}\over 2},\\
x^*&=&J\inv (\widetilde{z}-\widetilde{x}),\\
y&=&{\widetilde{z}+\widetilde{y}\over 2},\\
y^*&=&J\inv (\widetilde{y}-\widetilde{z}),\\
\end{eqnarray*} 
we check that the integral in \no{co.17} coincides with the one in \no{co.12} 
up to a constant Jacobian factor, so the results of this section fit 
with the ones of Section \ref{l2}.

\begin{ex}\label{co3}\rm
Let $a_j\in \widetilde{S}(m_j)$, $j=1,2$, where $m_j$ are order functions on $E\times E^*$ of the form
\begin{eqnarray*}
m_j(x,x^*)&=&\widetilde{m}_j(x)\langle x^* \rangle^{-N_j},\ N_j\in{\bf R},\\
\widetilde{m}_j(x)&\le&C\langle x-y \rangle^{M_j}\widetilde{m}_j(y),\ 
x,y\in E,\ M_j\ge 0.
\end{eqnarray*}
Then, the effective kernels $K_1,K_2$ of $a_1^w, a_2^w$ satisfy
$$
K_j(x,y)={\cal O}(1)m_j({x+y\over 2},J\inv (y-x))={\cal O}(1)
\widetilde{m}_j({x+y\over 2})\langle x-y \rangle^{-N_j}.
$$
Then $a_1\# a_2$ is well-defined and belongs to $\widetilde{S}(m_3)$, where 
$$
m_3({x+y\over 2},J\inv (y-x))=\int \widetilde{m}_1({x+z\over 2})
\langle x-z \rangle^{-N_1}\langle z-y \rangle^{-N_2}
\widetilde{m}_2({z+y\over 2}) dz,
$$
provided that the last integral converges for at least one (and then all) 
value(s) of $((x+y)/2,J\inv (y-x))$. If we use that 
\begin{eqnarray*}
\widetilde{m}_1({x+z\over 2})&\le& {\cal O}(1)\widetilde{m}_1(
{x+y\over 2})\langle z-y \rangle^{M_1}\\
\widetilde{m}_2({z+y\over 2})&\le& {\cal O}(1)\widetilde{m}_2(
{x+y\over 2})\langle x-z \rangle^{M_2},
\end{eqnarray*}
we get 
\ekv{co.18}
{
m_3({x+y\over 2},J\inv (y-x))\le {\cal O}(1)\widetilde{m}_1({x+y\over 2})
\widetilde{m}_2({x+y\over 2})\int \langle x-z \rangle^{-N_1+M_2}
\langle z-y \rangle^{-N_2+M_1}dz.
}
Thus $m_3$ and $a_1\#a_2\in\widetilde{S}(m_3)$ are well-defined if 
\ekv{co.19}
{
-(N_1+N_2)+M_1+M_2<-2n.
}
The integral $I$ in \no{co.18} is ${\cal O}(1)$ in any region where 
$x-y={\cal O}(1)$. For $\vert x-y \vert \ge 2$, we write 
$I\le I_1+I_2+I_3$, where 
\begin{itemize}
\item $I_1$ is the integral over $\vert x-z \vert\le {2\over 3}
\vert x-y \vert$. Here $\langle z-y \rangle\backsim \langle x-y \rangle$.
\item $I_2$ is the integral over $\vert z-y \vert\le {2\over 3}
\vert x-y \vert$. Here $\langle x-z \rangle\backsim \langle x-y \rangle$.
\item $I_3$ is the integral over $\vert x-z \vert, \vert z-y \vert
\ge {2\over 3}
\vert x-y \vert$. Here $\langle x-z \rangle\backsim \langle y-z \rangle\ge 
{1\over C}\langle x-y \rangle$.
\end{itemize}

\par We get
$$
I_1\backsim \langle x-y \rangle^{-N_2+M_1}\int_0^{\langle x-y \rangle}
\langle r \rangle^{-N_1+M_2+2n-1}dr\backsim \langle x-y \rangle
^{-N_2+M_1+(-N_1+M_2+2n)_+},
$$
with the convention that we tacitly add a factor $\ln \langle x-y \rangle$ 
when the expression inside $(..)_+$ is equal to $0$. Similarly (with the 
same convention),
$$
I_2\backsim \langle x-y \rangle
^{-N_1+M_2+(-N_2+M_1+2n)_+}.
$$ 
In view of \no{co.19}, we have 
$$
I_3\backsim \int_{\langle x-y \rangle}^\infty r^{-(N_1+N_2)+M_1+M_2+2n-1}dr
\backsim \langle x-y \rangle^{-(N_1+N_2)+M_1+M_2+2n}.
$$
it follows that 
\ekv{co.19.5}
{
I\backsim \langle x-y \rangle^{\max (-N_2+M_1+(-N_1+M_2+2n)_+,-N_1+M_2
+(-N_2+M_1+2n)_+)},
}
so with the same convention, we have
\ekv{co.20}
{m_3(x,x^*)\le {\cal O}(1)\widetilde{m}_1(x)\widetilde{m}_2(x)
\langle x^* \rangle^{\max (-N_2+M_1+(-N_1+M_2+2n)_+,-N_1+M_2
+(-N_2+M_1+2n)_+)}.
}
This simplifies to 
\ekv{co.21}
{
m_3(x,x^*)\le {\cal O}(1)\widetilde{m}_1(x)\widetilde{m}_2(x)
\langle x^* \rangle^{\max (-N_2+M_1,-N_1+M_2)}
}
if we strengthen the assumption \no{co.19} to:
\ekv{co.22}
{
-N_1+M_2,\, -N_2+M_1<-2n.
}
\end{ex}

\section{More direct approach using Bargmann transforms}\label{di}
\setcounter{equation}{0}

\par By using Bargmann transforms more systematically (from the point of 
view of Fourier integral operators with complex 
phase) the results of Section \ref{l2}, \ref{co} can be obtained more directly.
The price to pay however, is the loss of some aspects that might be 
helpful in other situations like the ones with variable metrics.

\par Let $F$ be  real $d$-dimensional space as in Section \ref{sy} and 
define $T:L^2(F)\to H_\Phi (F^{\bf C})$ as in \no{l2.8}--\no{l2.11}. Then 
we have 
\begin{prop}\label{di1}
If $m$ is an order function on $F\times F^*$, then 
\ekv{di.1}
{
\widetilde{S}(m)=\{ u\in{\cal S}'(F);\, e^{-\Phi (x)}\vert Tu(x) \vert \le C 
m(\kappa _T\inv (x,{2\over i}{\partial \Phi \over\partial x}(x)))\} ,
}
where the best constant $C=C(m)$ is a norm on $\widetilde{S}(m)$. 
\end{prop}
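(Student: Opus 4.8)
The plan is to transfer everything to the Bargmann side and to compare the pointwise size of $Tu$ with the local masses $\Vert\chi_\gamma^wu\Vert$ defining $\widetilde S(m)$. Using \no{l2.10}, \no{l2.11}, identify $\C^d\simeq\Lambda_\Phi\simeq F\times F^*$ through $\rho(x)=\kappa_T\inv(x,{2\over i}{\partial\Phi\over\partial x}(x))$, so that the effective kernels of Section \ref{l2} become functions on $(F\times F^*)\times(F\times F^*)$ and $x\mapsto e^{-\Phi(x)}\vert Tu(x)\vert$ is read as a function on $F\times F^*$; the bound in \no{di.1} then says simply $e^{-\Phi}\vert Tu\vert\le Cm$. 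Two ingredients will do the work. (i) For the Schwartz symbols $\chi_\gamma=\tau_\gamma\chi$ and $\psi_\gamma=\tau_\gamma\psi_0$, which are concentrated at position $\gamma$ and frequency $0$, the concentration estimate \no{l2.19} taken with $x_0=\gamma$, $x_0^*=0$ shows that the effective kernel of $\chi_\gamma^w$ (and likewise of $\psi_\gamma^w$) is concentrated at the single diagonal point $(\gamma,\gamma)$:
$$\vert K^{\rm eff}_{\chi_\gamma^w}(x,y)\vert\le C_N\langle x-\gamma\rangle^{-N}\langle y-\gamma\rangle^{-N},\quad\forall N.$$
(ii) Every $v\in H_\Phi$ satisfies the pointwise bound $e^{-\Phi(y)}\vert v(y)\vert\le C\Vert v\Vert_{H_\Phi}$, since $H_\Phi$ is a Fock-type space whose reproducing kernel obeys $K_\Phi(y,y)\asymp e^{2\Phi(y)}$ (equivalently $\vert v\vert^2e^{-2\Phi}$ is dominated by its local averages because $v$ is \hol{}).

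To prove that the right hand side of \no{di.1} is contained in $\widetilde S(m)$, assume $e^{-\Phi}\vert Tu\vert\le Cm$. Since $T\chi_\gamma^wu=\widetilde\chi_\gamma^w(Tu)$ with $\widetilde\chi_\gamma^w=T\chi_\gamma^wT\inv$ by \no{l2.12}, the representation \no{l2.13} and ingredient (i) give
$$e^{-\Phi(x)}\vert T\chi_\gamma^wu(x)\vert\le\int\vert K^{\rm eff}_{\chi_\gamma^w}(x,y)\vert\,e^{-\Phi(y)}\vert Tu(y)\vert\,L(dy)\le C\langle x-\gamma\rangle^{-N}\int\langle y-\gamma\rangle^{-N}m(y)\,L(dy).$$
By the order function property $m(y)\le C_0\langle y-\gamma\rangle^{N_0}m(\gamma)$, so for $N>N_0+2d$ the integral converges and $e^{-\Phi(x)}\vert T\chi_\gamma^wu(x)\vert\le C'm(\gamma)\langle x-\gamma\rangle^{-N}$. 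Squaring, integrating against $e^{-2\Phi}L(dx)$ and using that $T$ is unitary, $\Vert\chi_\gamma^wu\Vert_{L^2}=\Vert T\chi_\gamma^wu\Vert_{H_\Phi}\le C''m(\gamma)$, so $u\in\widetilde S(m)$ with norm controlled by $C$.

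For the reverse inclusion, let $u\in\widetilde S(m)$ and use $\sum_{\gamma}\psi_\gamma^w\chi_\gamma^w=1$ from Lemma \ref{sy3} to write $Tu=\sum_{\gamma}\widetilde\psi_\gamma^wv_\gamma$, where $v_\gamma=T\chi_\gamma^wu\in H_\Phi$ obeys $\Vert v_\gamma\Vert_{H_\Phi}=\Vert\chi_\gamma^wu\Vert_{L^2}\le Cm(\gamma)$. Applying \no{l2.13} to $\psi_\gamma$ together with ingredient (ii),
$$e^{-\Phi(x)}\vert\widetilde\psi_\gamma^wv_\gamma(x)\vert\le\int\vert K^{\rm eff}_{\psi_\gamma^w}(x,y)\vert\,e^{-\Phi(y)}\vert v_\gamma(y)\vert\,L(dy)\le C_N\langle x-\gamma\rangle^{-N}\Vert v_\gamma\Vert_{H_\Phi}\le C'm(\gamma)\langle x-\gamma\rangle^{-N}$$
for $N>2d$. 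Summing over $\gamma$, inserting $m(\gamma)\le C_0\langle\gamma-x\rangle^{N_0}m(x)$ and taking $N$ so large that $\sum_{\gamma}\langle x-\gamma\rangle^{-N+N_0}$ converges uniformly over the lattice, I obtain $e^{-\Phi(x)}\vert Tu(x)\vert\le C''m(x)$, i.e. $u$ belongs to the right hand side of \no{di.1} with constant controlled by $\Vert u\Vert_{\widetilde S(m)}$. The two inclusions give the claimed identity and the equivalence of norms. \textbf{The main obstacle} is not the order-function and convergence bookkeeping, which is routine, but ingredient (i): checking carefully, through \no{l2.19} with $x_0^*=0$ and the identification $\C^d\simeq F\times F^*$, that these effective kernels really do concentrate at the diagonal point $(\gamma,\gamma)$ with the two-sided decay, together with the reproducing-kernel bound (ii) --- the only places where the \hol{} structure of $H_\Phi$ and the geometry of $\kappa_T$ genuinely enter.
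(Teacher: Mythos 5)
Your argument is correct and follows essentially the same route as the paper: both directions rest on the identification of $F^{\bf C}$ with $F\times F^*$ via $\pi\circ\kappa_T$, the concentration of the effective kernels of $\chi_\gamma^w$ and $\psi_\gamma^w$ at the diagonal point $(\gamma,\gamma)$ (the paper's \no{di.1.5}, which is your ingredient (i)), and the decomposition $u=\sum_\gamma\psi_\gamma^w\chi_\gamma^wu$ of Lemma \ref{sy3}. The only cosmetic difference is that where you invoke the reproducing-kernel bound $e^{-\Phi}\vert v\vert\le C\Vert v\Vert_{H_\Phi}$ to handle the forward inclusion, the paper leaves this step implicit (one could equally use Cauchy--Schwarz against the $\langle y-\gamma\rangle^{-N}$ decay of the kernel); otherwise the proofs coincide.
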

\begin{proof}
Assume first that $u$ belongs to $\widetilde{S}(m)$ and write 
$ u=\sum_{\gamma \in \Gamma }\psi _\gamma ^w \chi_\gamma ^w u$ as in Lemma \ref{sy3}. 
The effective kernel of $\psi _\gamma ^w$ satisfies
\ekv{di.1.5}
{
|K^{\rm eff}_{\psi _\gamma ^w}(x,y)|\le C_N \langle x-\gamma \rangle ^{-N}
\langle y-\gamma \rangle ^{-N},
}
for every $N>0$, where throughout the proof we identify $F^{\bf C}$ with 
$F\times F^*$ by means of $\pi \circ \kappa _T$ and work on the latter space.
Here $\pi :\Lambda _\Phi \to F^{\bf C}$ is the natural projection. 
Then we see that 
$$
|e^{-\Phi /h}Tu(x)|\le C_N(u)\sum_{_\gamma \in \Gamma }m(\gamma )
\langle x-\gamma \rangle ^{-N}={\cal O}(m(x)).
$$

\par Conversely, if $e^{-\Phi /h}Tu={\cal O}(m(x))$, then since the effective 
kernel of $\chi_\gamma ^w$ also satisfies \no{di.1.5}, we see that 
$e^{-\Phi /h}T\chi_\gamma ^wu={\cal O}_N(\langle x-\gamma \rangle ^{-N}m(\gamma ))$, 
implying $\Vert e^{-\Phi /h}T\chi_\gamma ^wu \Vert_{L^2}={\cal O}(m(\gamma ))$, and 
hence $\Vert \chi_\gamma ^wu\Vert ={\cal O}(m(\gamma )).$
\end{proof}

\par With this in mind, we now take $a\in \widetilde{S}({\bf R}^n\times 
({\bf R}^n)^*;m)$ and look for an explicit choice of effective kernel for 
$a^w$.
Let $T:L^2({\bf R}^n)\to H_\Phi ({\bf C}^n)$ be a Bargmann transform as above. Consider first the map $a\mapsto K_{a^w}(x,y)\in {\cal S}'({\bf R}^n\times 
{\bf R}^n)$ from $a$ to the distribution kernel of $a^w$, given by 
\eekv{di.2}
{
K_{a^w}(x,y)&=&{1\over (2\pi )^n}\int e^{i(x-y)\cdot \tau} a({x+y\over 2},\tau )
d\tau}{&=& {1\over (2\pi )^{2n}}\iiint e^{i(x-y)\cdot \tau +i({x+y\over 2}-t)
\cdot s} a(t,\tau )dtdsd\tau .
} 
We view this as a Fourier integral operator $B:\, a\mapsto K_{a^w}(x,y)$ with 
quadratic phase. The associated linear \ctf{} is given by:
$$
\kappa _B:\ (t,\tau; t^*,\tau^*)=({x+y\over 2},\tau ;s,y-x)\mapsto (x,\tau+{s\over 2};y,-\tau+
{s\over 2})=(x,x^*;y,y^*),
$$
which we can write as
\ekv{di.3}
{\kappa _B:\ (t,\tau ;t^*,\tau^*)\mapsto (t-{\tau^*\over 2},\tau+{t^*\over 2};
t+{\tau^*\over 2},-\tau+{t^*\over 2}).
}

\par From the unitarity of $T$, we know that $T^*T=1$, where 
\ekv{di.4}
{
T^*v(y)=C\int e^{-i\overline{\phi (x,y)}}v(x)e^{-2\Phi (x)}L(dx).
}
We can therefore define the effective kernel of $a^w$ to be 
\ekv{di.4.5}{K^{\rm eff}(x,y)=e^{-\Phi (x)}
K(x,\overline{y})e^{-\Phi (y)},} 
where
\eekv{di.5}
{
Ta^wT^*v(x)&=&\int K(x,\overline{y})v(y)e^{-2\Phi (y)}L(dy),\ v\in 
H_\Phi ({\bf C}^n),
}
{
K(x,\overline{y})&=& C^2 \iint e^{i(\phi (x,t)-\overline{\phi (y,s)})}
K_{a^w}(t,s)dtds.
}
We write this as
$$
K(x,y)=C^2\iint e^{i(\phi (x,t)-\phi ^*(y,s))}K_{a^w}(t,s)dtds,
$$
with $\phi ^*(y,s)=\overline{\phi (\overline{y},\overline{s})}$, so 
\ekv{di.6}
{K(x,y)=(T\otimes \widetilde{T})(K_{a^w})(x,y),
}
where 
\ekv{di.7}
{
(\widetilde{T}u)(y)=C\int e^{-i\phi ^*(y,s)}u(s)ds
=\overline{(T\overline{u})(\overline{y})}
.
}

\par We see that $\widetilde{T}:L^2({\bf R}^n)\to H_{\Phi ^*}({\bf C}^n)$ is 
a unitary Bargmann transform, where
\ekv{di.8}
{
\Phi ^*(y)=\sup_{s\in{\bf R}^n}\Im \phi ^*(y,s)=\sup_{s\in{\bf R}^n}\Im 
\overline{\phi (\overline{y},s)}=\Phi (\overline{y}).
}

The \ctf{} associated to $\widetilde{T}$ is 
\ekv{di.9}
{
\kappa _{\widetilde{T}}:\, (s,{\partial \phi ^*\over \partial s}(y,s))
\mapsto (y,-{\partial \phi ^*\over \partial y}(y,s)).
}
If 
\ekv{di.10}
{
\iota (s,\sigma )=(\overline{s},-\overline{\sigma }),
}
we check that 
\ekv{di.11}
{
\kappa _{\widetilde{T}}=\iota \kappa _T\iota ,\quad \iota :(x,{2\over i}
{\partial\Phi  \over\partial x}(x))\mapsto (\overline{x},{2\over i}
{\partial\Phi^*  \over\partial y}(\overline{x})).
}
\par Clearly $T\otimes \widetilde{T}$ is a Bargmann transform 
with associated \ctf{} $\kappa _T\times (\iota \kappa _T\iota ) $, 
so in view of \no{di.3} the map $a\mapsto K$ is also a Bargmann 
transform with associated canonical 
transformation 
\ekv{di.12}
{
(E\times E^*)^{\bf C}\ni (t,\tau ;t^*,\tau^*)\mapsto (\kappa _T((t,\tau )-
{1\over 2}J(t^*,\tau ^*)),\iota \kappa _T((\overline{(t,\tau )}+{1\over 2}
\overline{J(t^*,\tau^*)})),
}
where $E={\bf R}^n\times ({\bf R}^n)^*$.
The restriction to the real phase space is
\eekv{di.13}
{
&&E\times E^*\ni (t,\tau ;t^*,\tau^*)\mapsto} 
{&&(\kappa _T((t,\tau )-
{1\over 2}J(t^*,\tau ^*)),\iota \kappa _T(({(t,\tau )}+{1\over 2}{(t^*,\tau^*)}))\in \Lambda _\Phi \times \iota \Lambda _\Phi =\Lambda _\Phi \times \Lambda _{\Phi ^*},
}
and this restriction determines our complex linear canonical 
transformation uniquely.
 
\par As in Section \ref{l2} we may view the effective kernel 
$K^{\rm eff}(x,y)$ 
in \no{di.4.5} as a function on $E\times E$, by identifying $x,y\in {\bf C}^n$ 
with $\kappa _T\inv (x,{2\over i}{\partial \Phi \over \partial x}(x)),\, 
\kappa _T\inv 
(y,{2\over i}{\partial \Phi \over \partial x}(y))\in E$ respectively. With this 
identification and using also the general characterization in \no{di.1} (with
$T$ replaced by $T\otimes\widetilde{T})$), 
we see that if $a\in{\cal S}'(E)$, then $a\in\widetilde{S}(m)$ iff
\ekv{di.14}
{
K^{\rm eff}(t-{1\over 2}Jt^*,t+{1\over 2}Jt^*)={\cal O}(1)m(t,t^*),\ (t,t^*)\in E\times E^*,
} 
where we shortened the notation by writing $t$ instead of $(t,\tau )$ and $t^*$ 
instead of $(t^*,\tau ^*)$.

\par Theorem \ref{l21} now follows from \no{di.14}, \no{di.4.5}, \no{di.5}. 

\par Theorem \ref{co2} also follows from  \no{di.14}, \no{di.4.5}, \no{di.5} 
together with
the remark that the kernel $K(x,y)=K_a(x,y)$ is the unique kernel which is 
holomorphic on ${\bf C}^n\times {\bf C}^n$, 
such that the corresponding $K^{\rm eff}_{a^w}$ given in \no{di.4.5} is of 
temperate growth at infinity and \no{di.5} 
is fulfilled. Indeed, then it is clear that 
\ekv{di.15}
{
K^{\rm eff}_{(a_1\#_2)^w}(x,y)=\int K^{\rm eff}_{a_1^w}(x,z)
K^{\rm eff}_{a_2^w}
(z,y) L(dz) } and the bound \no{di.14} for $a_1\# a_2$ with $m=m_3$ follows directly from the corresponding bounds for $a_j$ with $m=m_j$.

\section{$C_p$ classes}\label{cp}
\setcounter{equation}{0}

\par In this section we give a simple condition on an order function $m$
on $E\times E^*$ ($E=T^*{\bf R}^n$) and 
a number $p\in [1,\infty ]$ that implies the property:
\ekv{cp.1}{
\exists C>0\hbox{ such that: }a\in \widetilde{S}(m)\Rightarrow a^w\in 
C_p(L^2,L^2)\hbox{ and }\Vert a^w\Vert_{C_p}\le C
\Vert a\Vert_{\widetilde{S}(m)}.
}
Here $C_p(L^2,L^2)$ is the Schatten--von Neumann class of operators: $L^2(
{\bf R}^n)\to L^2({\bf R}^n)$, see for instance \cite{GoKr}. 

Let $m$ be an order function on $E\times E^*$ and let $p\in [1,+\infty ]$. 
Consider the following property, where $q$ is given in \no{co.15.5} and
$\Gamma \subset E$ is a lattice, 
\eekv{cp.2}
{&&
\exists C>0\hbox{ such that if }\vert a_{\alpha ,\beta } \vert\le 
m(q (\alpha ,\beta )),\ \alpha ,\beta \in\Gamma ,
}
{&&
\hbox {then } (a_{\alpha ,\beta })_{\alpha, \beta\in \Gamma }\in 
C_p(\ell^2(\Gamma ), 
\ell^2(\Gamma ))\hbox{ and } \Vert (a_{\alpha ,\beta })\Vert_{C_p}\le C.
}
Notice that if \no{cp.2} holds and if we fix some number 
$N_0\in {\bf N}^*$, then
if $(A_{\alpha ,\beta })_{\alpha ,\beta \in \Gamma }$  
is a block matrix where every $A_{\alpha ,\beta }$ is an $N_0\times N_0$
matrix then
\ekv{cp.3}
{
\hbox{same as \no{cp.2} with }a_{\alpha ,\beta }\hbox{ replaced by }
A_{\alpha ,\beta } \hbox{ and }\vert \cdot  \vert \hbox{ by }
\Vert \cdot \Vert _{{\cal L}({\bf C}^{N_0},{\bf C}^{N_0})}.
}
\begin{prop}\label{cp1}
The property \no{cp.2} only depends on $m,p$ but not on the choice of $\Gamma $.
\end{prop}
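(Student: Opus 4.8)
The plan is to prove the one-directional implication that \no{cp.2} for a lattice $\Gamma$ forces \no{cp.2} for an arbitrary second lattice $\widetilde{\Gamma}\subset E$; since the roles of $\Gamma$ and $\widetilde{\Gamma}$ are symmetric, proving this for all ordered pairs immediately gives the asserted equivalence. The whole idea is to compare the two lattices by grouping the points of $\widetilde{\Gamma}$ into bounded clusters indexed by $\Gamma$, thereby turning a scalar matrix on $\widetilde{\Gamma}$ into a block matrix on $\Gamma$ to which the block form \no{cp.3} of \no{cp.2} applies. This is exactly the reason the block version \no{cp.3} was recorded beforehand.

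First I would fix a bounded fundamental domain $D$ for $\Gamma$, say $D\subset B(0,R)$, so that $E=\bigsqcup_{\gamma\in\Gamma}(\gamma+D)$, and define $\pi:\widetilde{\Gamma}\to\Gamma$ by $\widetilde{\gamma}\in\pi(\widetilde{\gamma})+D$. Since $\widetilde{\Gamma}$ is uniformly discrete, each fibre $\pi^{-1}(\gamma)=\widetilde{\Gamma}\cap(\gamma+D)$ has at most some fixed number $K$ of points, with $K$ independent of $\gamma$, and $|\widetilde{\gamma}-\pi(\widetilde{\gamma})|\le R$ for all $\widetilde{\gamma}$. Choosing for each $\gamma$ an injection of $\pi^{-1}(\gamma)$ into $\{1,\dots,K\}$ (and leaving the unused slots empty) identifies $\ell^2(\widetilde{\Gamma})$ isometrically with the span $H$ of the occupied basis vectors inside $\ell^2(\Gamma)\otimes{\bf C}^K=\ell^2(\Gamma;{\bf C}^K)$.

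Next, given entries $(\widetilde a_{\widetilde\alpha,\widetilde\beta})_{\widetilde\alpha,\widetilde\beta\in\widetilde{\Gamma}}$ with $|\widetilde a_{\widetilde\alpha,\widetilde\beta}|\le m(q(\widetilde\alpha,\widetilde\beta))$, I would assemble the block matrix $A=(A_{\alpha,\beta})_{\alpha,\beta\in\Gamma}$, where $A_{\alpha,\beta}\in{\cal L}({\bf C}^K,{\bf C}^K)$ carries the entry $\widetilde a_{\widetilde\alpha,\widetilde\beta}$ in the slot assigned to $(\widetilde\alpha,\widetilde\beta)$ whenever $\pi(\widetilde\alpha)=\alpha$, $\pi(\widetilde\beta)=\beta$, and is zero in every unoccupied slot. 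Because $q$ from \no{co.15.5} is linear, $q(\widetilde\alpha,\widetilde\beta)-q(\alpha,\beta)=q(\widetilde\alpha-\alpha,\widetilde\beta-\beta)$ has norm bounded by a constant depending only on $R$; hence by the order function property \no{sy.1}, $m(q(\widetilde\alpha,\widetilde\beta))\le{\cal O}(1)m(q(\alpha,\beta))$. Every entry of $A_{\alpha,\beta}$ is thus bounded by ${\cal O}(1)m(q(\alpha,\beta))$, so $\Vert A_{\alpha,\beta}\Vert_{{\cal L}({\bf C}^K,{\bf C}^K)}\le K\cdot{\cal O}(1)\,m(q(\alpha,\beta))$. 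After dividing by this overall constant and invoking the block form \no{cp.3} of \no{cp.2} with $N_0=K$, I conclude that $A\in C_p(\ell^2(\Gamma;{\bf C}^K))$ with $\Vert A\Vert_{C_p}\le{\cal O}(1)$, the constant depending only on $m$, $p$ and $K$, and in particular not on the chosen entries.

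Finally, under the identification of $\ell^2(\widetilde{\Gamma})$ with $H$, the matrix $(\widetilde a_{\widetilde\alpha,\widetilde\beta})$ is precisely the compression $P_HAP_H$ of the block operator (equivalently $A=(\widetilde a_{\widetilde\alpha,\widetilde\beta})\oplus 0$, since unoccupied slots were set to zero), where $P_H$ is the orthogonal projection onto $H$. As $C_p$ is an operator ideal, $\Vert P_HAP_H\Vert_{C_p}\le\Vert A\Vert_{C_p}$, which gives $\Vert(\widetilde a_{\widetilde\alpha,\widetilde\beta})\Vert_{C_p}\le{\cal O}(1)$ with a constant independent of the entries, i.e.\ property \no{cp.2} for $\widetilde{\Gamma}$. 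I expect the only point requiring genuine care to be the bookkeeping that realizes the scalar matrix on $\widetilde{\Gamma}$ as a bona fide compression of a block matrix on $\Gamma$ whose block norms are dominated by the $\Gamma$-majorant $m\circ q$; once that is set up correctly, the order function estimate and the ideal property of $C_p$ (valid uniformly for all $p\in[1,\infty]$) are routine.
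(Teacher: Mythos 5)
Your proposal is correct and follows essentially the same route as the paper: both arguments cluster the points of $\widetilde{\Gamma}$ into uniformly bounded fibres over $\Gamma$ (the paper via a nearest-point map, you via a fundamental domain), assemble the scalar matrix into a block matrix on $\Gamma$ with zeros in the unoccupied slots, bound the block norms by ${\cal O}(1)m(q(\alpha,\beta))$ using linearity of $q$ and the order-function property, and invoke the block version \no{cp.3} of \no{cp.2}. Your explicit appeal to the ideal property of $C_p$ for the compression is a harmless elaboration of the identification the paper leaves implicit.
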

\begin{proof}
Let $m,p,\Gamma $ satisfy \no{cp.2} and let $\widetilde{\Gamma }$ be a 
second lattice in $E$. Let $(a_{\widetilde{\alpha },\widetilde{\beta }})$ 
be a $\widetilde{\Gamma }\times \widetilde{\Gamma}$ matrix satisfying 
$\vert a_{\widetilde{\alpha },\widetilde{\beta }} \vert \le 
m(q (\widetilde{\alpha },\widetilde{\beta }))$.
Let $\pi (\widetilde{\alpha })\in \Gamma $ be a point that realizes the 
distance from $\widetilde{\alpha } $ to $\Gamma $, so that 
$\vert \pi (\widetilde{\alpha })-\widetilde{\alpha } \vert\le C_0$ for 
some constant $C_0>0$. Let $N_0=\max \#\pi \inv (\alpha )$ and choose an 
enumeration $\pi\inv (\alpha )=\{ \widetilde{\alpha }_1,...,
\widetilde{\alpha }_{N(\alpha )}\}$, $N(\alpha )\le N_0$, for 
every $\alpha \in\Gamma $. 
Then we can identify $(a_{\widetilde{\alpha },\widetilde{\beta }})_
{\widetilde{\Gamma }\times\widetilde{\Gamma }}$ with the matrix 
$(A_{\alpha ,\beta })_{\alpha ,\beta \in \Gamma \times \Gamma }$ where 
$A_{\alpha ,\beta }$ is the $N_0\times N_0$ matrix with the entries
$$
(A_{\alpha ,\beta })_{j,k}=\cases{a_{\widetilde{\alpha }_j,\widetilde{\beta }_k},
\hbox{ if } 1\le j\le N(\alpha ),\ 1\le k\le N(\beta ),\cr
0, \hbox{ otherwise.}}
$$
Then $\Vert A_{\alpha ,\beta }\Vert \le Cm(q (\alpha ,\beta ))$ and we 
can apply \no{cp.3} to conclude.
\end{proof}

\begin{theo}\label{cp2}
Let $m$ be an order function and $p\in [1,\infty ]$. If \no{cp.2} holds, 
then we have \no{cp.1}.
\end{theo}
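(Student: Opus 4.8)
The plan is to transport the problem to the Bargmann side and there compress the resulting integral operator to a matrix on the lattice $\Gamma$, where the hypothesis \no{cp.2} applies verbatim. First I would use Theorem \ref{l21} together with the discussion of Section \ref{di}: after conjugating by a Bargmann transform $T$, the operator $a^w$ becomes the integral operator ${\cal K}=Ta^wT\inv$ on $H_\Phi({\bf C}^n)$ whose effective kernel obeys $\vert K^{\rm eff}(x,y)\vert\le C(a)\,m(q (x,y))$ once ${\bf C}^n$ is identified with $E$ as in \no{l2.16}, with $C(a)\le {\cal O}(1)\Vert a\Vert_{\widetilde{S}(m)}$. Since $T$ is unitary, $\Vert a^w\Vert_{C_p}=\Vert {\cal K}\Vert_{C_p}$, so it suffices to bound the Schatten norm of ${\cal K}$.

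By Proposition \ref{cp1} the property \no{cp.2} does not depend on the choice of $\Gamma$, so I am free to take $\Gamma\subset E$ dense enough that the associated normalized reproducing kernels (coherent states) $\varphi_\alpha\in H_\Phi$, $\alpha\in\Gamma$, form a frame; this is the standard sampling property of generalized Bargmann--Fock spaces. Writing $C:H_\Phi\to\ell^2(\Gamma)$, $Cu=(\langle u,\varphi_\alpha\rangle)_{\alpha}$, for the analysis operator, the frame bounds read $cI\le C^*C\le C'I$ with $0<c\le C'<\infty$, so that $C^*C$ is boundedly invertible.

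The key step is to estimate the compressed matrix $M_{\alpha\beta}=\langle {\cal K}\varphi_\beta,\varphi_\alpha\rangle=(C{\cal K}C^*)_{\alpha\beta}$. Here I would use the pointwise bound on $K^{\rm eff}$ together with the Gaussian concentration of $\varphi_\alpha,\varphi_\beta$ near $\alpha,\beta$: since $q $ is affine, the order-function inequality \no{sy.1} gives $m(q (x,y))\le {\cal O}(1)\langle (x,y)-(\alpha,\beta)\rangle^{N_0}m(q (\alpha,\beta))$ on the effective support of the integrand, and the polynomial factor is swallowed by the rapid decay of the coherent states. This yields
$$
\vert M_{\alpha\beta}\vert\le C(a)\,m(q (\alpha,\beta)),\quad \alpha,\beta\in\Gamma,
$$
so \no{cp.2} applies, after rescaling by $C(a)$, and gives $M\in C_p(\ell^2(\Gamma))$ with $\Vert M\Vert_{C_p}\le {\cal O}(1)\,C(a)$.

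It remains to recover ${\cal K}$ from its compression. From $C{\cal K}C^*=M$ and the identity
$$
{\cal K}=(C^*C)\inv C^*\,M\,C\,(C^*C)\inv ,
$$
obtained by inserting $(C^*C)\inv(C^*C)=I$ on either side, the two-sided ideal property of $C_p$ under composition with the bounded operators $(C^*C)\inv C^*$ and $C(C^*C)\inv$ gives $\Vert {\cal K}\Vert_{C_p}\le {\cal O}(1)\Vert M\Vert_{C_p}\le {\cal O}(1)\,C(a)\le {\cal O}(1)\Vert a\Vert_{\widetilde{S}(m)}$, which is precisely \no{cp.1}. The main obstacle is the concentration estimate for $M_{\alpha\beta}$ and the accompanying frame construction: one must check that a sufficiently dense $\Gamma$ really produces a frame in $H_\Phi$ and that weighting the kernel against the coherent states loses nothing beyond the order-function factor $m(q (\alpha,\beta))$. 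The endpoint $p=\infty$ (bounded, respectively compact, operators) is covered by the same argument, the ideal property being unchanged.
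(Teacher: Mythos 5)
Your argument is correct, but it follows a genuinely different route from the paper's. You compress $Ta^wT\inv$ against a coherent-state frame $(\varphi_\alpha)_{\alpha\in\Gamma}$, apply \no{cp.2} to the resulting matrix $M=C{\cal K}C^*$, and recover ${\cal K}=(C^*C)\inv C^*MC(C^*C)\inv$ using the ideal property of $C_p$; the one external ingredient is the sampling theorem asserting that a sufficiently dense lattice yields a frame for $H_\Phi$ (equivalently, that the frame operator $C^*C$ is boundedly invertible). That statement is true and standard --- the sufficiency direction follows from the sub-mean-value property of $e^{-\Phi}|u|$ for $u$ holomorphic together with a Riemann-sum perturbation argument --- but it is precisely what the paper's proof is organized to avoid. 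The paper instead cuts $L^2({\bf C}^n)$ with a partition of unity ($A_{\rm eff}=W^*{\cal A}W$ with $W$, $W^*$ bounded), uses the local holomorphy of $K(x,\overline{y})$ and the Cauchy inequalities to upgrade the pointwise bound $m(q(\alpha,\beta))$ to bounds on all derivatives of the localized kernels $K_{\alpha,\beta}$ on $\Omega_\alpha\times\Omega_\beta$, expands each block in a Dirichlet eigenbasis to get matrix elements ${\cal O}(m(q(\alpha,\beta))\langle j\rangle^{-N}\langle k\rangle^{-N})$, applies \no{cp.2} to each fixed $(j,k)$-block, and sums the block $C_p$ norms. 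Your route is shorter and conceptually cleaner once the frame is granted; the paper's route is self-contained, needs only the one-sided boundedness of $W$ rather than invertibility of a frame operator, and the derivative/eigenfunction step is exactly where the pointwise kernel bound (which by itself says nothing about Schatten membership of an integral operator) is converted into discrete data to which \no{cp.2} applies. Do make sure, in your version, to record the off-diagonal Gaussian decay $|\varphi_\alpha(x)|e^{-\Phi(x)}\le Ce^{-c|x-\alpha|^2}$ and the order-function inequality explicitly when you bound $|M_{\alpha\beta}|$, and note that \no{cp.2} is applied to $M/C(a)$ by homogeneity of the $C_p$ norm.
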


\begin{proof}
Assume that \no{cp.2} holds and
let $a\in \widetilde{S}(m)$. Define $K(x,\overline{y})$ as in \no{di.5}. It 
suffices to estimate the $C_p$ norm of the operator 
$A:L^2(e^{-2\Phi }L(dx))\to L^2(e^{-2\Phi }L(dx))$, given by 
$$
Au(x)=\int K(x,\overline{y})u(y)e^{-2\Phi (y)}L(dy),
$$ 
or equivalently the one of $A_{{\rm eff}}:L^2({\bf C}^n)\to L^2({\bf C}^n)$, 
given by
\ekv{cp.3.5}{
A_{{\rm eff}}u(x)=\int K^{{\rm eff}}(x,y)u(y)L(dy),
}
with $K^{\rm eff}$ given in \no{di.4.5}.
Recall that $K^{\rm eff}(x,y)={\cal O}(1)m(q (x,y))$ 
(identifying ${\bf C}^n$ with $T^*{\bf R}^n$ via $\pi _x\circ \kappa _T$),
so $K(x,\overline{y})={\cal O}(1)m(q (x,y))e^{\Phi (x)+\Phi (y)}$. 

\par For $\alpha ,\beta \in \Gamma $ we have (identifying $\Gamma $ with a lattice in ${\bf C}^n$)
\ekv{cp.4}
{
K(x,\overline{y})=e^{F_\alpha (x-\alpha )}\widetilde{K}_{\alpha ,\beta }
(x,\overline{y})e^{\overline{F_\beta (y-\beta )}},
}
where 
\ekv{cp.5}
{F_\alpha (x-\alpha )=\Phi (\alpha )+2{\partial \Phi \over \partial x}
(\alpha )\cdot (x-\alpha )}
is \hol{} with 
\ekv{cp.6}
{
\Re F_\alpha (x-\alpha )=\Phi (x)+R_\alpha (x-\alpha ),\ 
R_\alpha (x-\alpha )={\cal O}(\vert x-\alpha  \vert^2),
}
and 
\ekv{cp.7}
{
\vert \nabla_x ^k\nabla_y ^\ell \widetilde{K}_{\alpha ,\beta }(x,\overline{y}) 
\vert \le \widetilde{C}_{k,\ell}\, m(q (\alpha ,\beta )),\ 
\vert x-\alpha  \vert, \vert y-\beta  \vert \le C_0.
}
Here we identify $\alpha ,\beta \in E$ with their images 
$\pi _x\kappa _T(\alpha ), \pi _x\kappa _T(\beta )\in {\bf C}^n
$ respectively. In fact, the case $k=\ell =0$ is clear and we get the extension to 
\aby{} $k,\ell $ from the Cauchy inequalities, since 
$\widetilde{K}_{\alpha ,\beta }$ 
is \hol{}.

\par We can also write
\ekv{cp.8}
{
K^{\rm eff}(x,y)=e^{iG_\alpha (x-\alpha )}K_{\alpha ,\beta }(x,y)
e^{-iG_\beta (y-\beta )},
}
where $$G_\alpha (x-\alpha )=\Im F_\alpha (x-\alpha ),\quad 
K_{\alpha ,\beta }=e^{R_\alpha (x-\alpha )}\widetilde{K}_{\alpha ,\beta }
(x,\overline{y})e^{R_\beta (y-\beta )},
$$
so 
\ekv{cp.9}
{
\vert \nabla_x ^k\nabla_y ^\ell {K}_{\alpha ,\beta }(x,y) 
\vert \le {C}_{k,\ell}m(q (\alpha ,\beta )),\ 
\vert x-\alpha  \vert, \vert y-\beta  \vert \le C_0.
}

\par Consider a partition of unity
\ekv{cp.10}
{ 1=\sum_{\alpha \in\Gamma  }\chi_\alpha (x),\quad \chi_\alpha
(x)=\chi_0(x-\alpha ),\
\chi_0\in C_0^\infty (\Omega _0;{\bf R}),
} where $\Omega _0$ is open with smooth boundary. Let $\Omega _\alpha =
\Omega _0+\alpha $, so that \no{cp.9} holds for $(x,y)\in \Omega _\alpha \times 
\Omega _\beta $.

\par Let $W:L^2({\bf C}^n)\to\bigoplus_{\beta \in\Gamma }L^2(\Omega _\beta )$ be defined by 
$$
Wu=\Big( {{(e^{-iG_\beta (x-\beta )}u(x))}_\vert}_{\Omega _\beta
}\Big) _{\beta \in\Gamma },
$$
so that the adjoint of $W$ is given by 
$$
W^*v=\sum_{\alpha \in\Gamma }e^{iG_\alpha (x-\alpha )}v_\alpha (x)
1_{\Omega _\alpha }(x),\quad v=(v_\alpha )_{\alpha \in\Gamma }\in
\bigoplus_{\alpha \in \Gamma }L^2(\Omega _\alpha )
.
$$
Then $W$ and its adjoint are bounded operators and 
\ekv{cp.11}
{
A_{\rm eff}=W^*{\cal A}W,
}
where ${\cal A}=(A_{\alpha ,\beta })_{\alpha ,\beta \in\Gamma }$ and 
$A_{\rm eff}:L^2({\bf C}^n)\to L^2({\bf C}^n)$, $A_{\alpha ,\beta }:
L^2(\Omega_\beta )\to L^2(\Omega _\alpha )$ are given by the kernels 
$K^{\rm eff}(x,y)$ and $\chi_\alpha (x)K_{\alpha ,\beta }(x,y)
\chi_\beta (y)$ respectively. It now suffices to show that 
$$
{\cal A}:\bigoplus_{\beta \in \Gamma }L^2(\Omega _\beta )\to
\bigoplus_{\beta \in \Gamma }L^2(\Omega _\beta )
$$ 
belongs to $C_p$ with a norm 
that is bounded by a constant times the $\widetilde{S}(m)$-norm of $a$.

\par Let $e_0,e_1,..\in L^2(\Omega _0)$ be an orthonormal basis of 
eigenfunctions of minus the Dirichlet Laplacian in $\Omega _0$, arranged 
so that the corresponding eigenvalues form an increasing sequence. Then 
$e_{\alpha ,j}:=\tau_\alpha e_j$, $j=0,1,...$ form an orthonormal basis of 
eigenfunctions of the corresponding operator in $L^2(\Omega _\alpha )$. 
From \no{cp.9} it follows that the matrix elements $K_{\alpha ,j;\beta ,k}$ 
of $A_{\alpha ,\beta }$ with respect to the bases $(e_{\alpha ,\cdot })$ 
and $(e_{\beta ,\cdot })$ satisfy
\ekv{cp.12}
{
\vert K_{\alpha ,j;\beta ,k}\vert \le C_Nm(q (\alpha ,\beta ))
\langle j \rangle^{-N}\langle k \rangle^{-N},
}  
for every $N\in{\bf N}$. We notice that $(K_{\alpha ,j;\beta ,k})
_{(\alpha ,j),(\beta ,k)\in \Gamma \times {\bf N}}$ is the matrix of 
${\cal A}$ with respect to the orthonormal basis 
$(e_{\alpha ,j})_{(\alpha ,j)\in\Gamma 
\times {\bf N}}$. We can represent this matrix as a block matrix 
$(K^{j,k})_{j,k\in{\bf N}}$, where $K^{j,k}:\ell^2(\Gamma )\to\ell^2(\Gamma )$ 
has the matrix $(K_{\alpha ,j;\beta ,k})
_{\alpha ,\beta \in\Gamma }$. Since \no{cp.2} holds and $a\in\widetilde{S}(m)$, we 
deduce from \no{cp.12} that 
\ekv{cp.12.4}
{
\Vert K^{j,k}\Vert_{C_p}\le \widetilde{C}_N\langle j \rangle^{-N}
\langle k \rangle^{-N}.
}
Choosing $N>2n$, we get 
\ekv{cp.12.5}{
\Vert {\cal A}\Vert_{C_p}\le \sum_{j,k}\Vert K^{j,k}\Vert_{C_p}<\infty .
} 
Hence $a^w\in C_p$ and the uniform bound $\Vert a^w\Vert_{C_p}\le 
\Vert a\Vert_{\widetilde{S}(m)}$ also follows from the proof.
\end{proof}

\begin{ex}\label{cp3}\rm
Assume that 
\ekv{cp.13}
{\int_{E^*} \Vert m(\cdot ,x^*)\Vert _{L^p(E)}dx^*<\infty .}
\end{ex}
Then 
\ekv{cp.14}
{\Big( m(q (\alpha, \beta))\Big)_{\alpha ,\beta \in\Gamma }=
\Big( m({\alpha +\beta \over 2},
J\inv (\beta -\alpha ))\Big)_{\alpha ,\beta \in\Gamma }} 
is a matrix where each translated diagonal $\{ (\alpha ,\beta )\in \Gamma 
\times \Gamma ;\, 
\alpha -\beta =\delta\} $ has an $\ell ^p$ norm which is summable with 
respect to $\delta \in \Gamma $. Now a matrix with non-vanishing elements 
in only one translated diagonal has a $C_p$ norm equal to the $\ell ^p$ 
norm of that diagonal, so we conclude that the $C_p$ norm of the matrix 
in \no{cp.14} is bounded by 
$$
\sum_{\delta \in \Gamma }\Vert m({\cdot \over 2},\delta )\Vert_{\ell^p}<\infty .
$$
We clearly have the same conclusion for every matrix 
$(a_{\alpha ,\beta })_{\alpha ,\beta \in\Gamma }$ satisfying 
$\vert a_{\alpha ,\beta }  \vert\le m(q (\alpha, \beta))$, so 
 \no{cp.2} holds and hence by Theorem \ref{cp2} we have the 
property \no{cp.1}.

\section{Further generalizations}\label{ge}
\setcounter{equation}{0}

\par Let $E$ be a $d$-dimensional real vector space and let 
$\Gamma \subset E$ be a lattice. We shall extend the preceding results
by replacing the $\ell^\infty (\Gamma )$-norm in the definition of 
the symbol spaces by a more general Banach space norm. Let 
$B$ be a Banach space of functions $u:\Gamma \to {\bf C}$ with the 
following properties:
\ekv{ge.1}
{
\hbox{If }u\in B,\ \gamma \in \Gamma ,\hbox{ then }\tau_\gamma u\in B,
\hbox{ and }\Vert \tau_\gamma u\Vert_B=\Vert u\Vert_B .
}
\ekv{ge.2}
{
\delta _\gamma \in B,\ \forall \gamma \in \Gamma ,
}
where $\tau_\gamma u(\alpha )=u(\alpha -\gamma )$, 
$\delta _\gamma (\alpha )=\delta _{\gamma ,\alpha }$, 
$\alpha \in 
\Gamma  $. (The last assumption will soon be replaced by a stronger one.)

\par If $u=\sum_{\gamma \in \Gamma } u(\gamma )\delta _\gamma \in B$, we get
$$
\Vert u\Vert_{B}\le \sum \vert u(\gamma ) \vert\Vert \delta _\gamma \Vert_B= C\Vert u\Vert_{\ell^1},
$$
where $C=\Vert \delta _\gamma \Vert_B$ (is independent of $\gamma $). Thus 
\ekv{ge.3}{\ell^1(\Gamma )\subset B.}

\par We need to strengthen \no{ge.2} to the following assumption:
\eekv{ge.4}
{&&
\hbox{If }u\in B\hbox{ and }v:\Gamma \to {\bf C} \hbox{ satisfies }
\vert v(\gamma ) \vert\le \vert u(\gamma ) \vert ,\ \forall \gamma \in \Gamma ,
}
{&&\hbox{then }v\in B\hbox{ and }\Vert v\Vert_B\le C\Vert u\Vert_B,\hbox{ where }
C\hbox{ is independent of }u,v.
}
It follows that $\Vert u(\gamma )\delta _\gamma \Vert_B\le C\Vert u\Vert_B$, 
for all $u\in B$, $\gamma \in \Gamma $, or equivalently that 
$$
\vert u(\gamma ) \vert\le {C\over \Vert \delta _\gamma \Vert_B}
\Vert u\Vert_B=\widetilde{C}\Vert u\Vert_B ,
$$
so 
\ekv{ge.5}
{
B\subset \ell^\infty (\Gamma ),\hbox{ and }\Vert u\Vert_{\ell^\infty }
\le \widetilde{C}\Vert u\Vert_B,\ \forall u\in B.
}

\par If $f\in\ell^1(\Gamma )$ then using only the translation invariance
\no{ge.1}, we get
\ekv{ge.6}
{
u\in B\Rightarrow \cases{f*u\in B,\cr \Vert f*u\Vert_B\le \Vert f\Vert_{\ell^1}
\Vert u\Vert_B.}
}

\par Using also \no{ge.4} we get the following partial strengthening: Let 
$k:\, \Gamma \times \Gamma \to \Gamma $ satisfy $\vert k(\alpha ,\beta ) 
\vert \le f(\alpha -\beta )$ where $f\in \ell^1(\Gamma )$. Then
\ekv{ge.7}
{
u\in B\Rightarrow v(\alpha ):=\sum_{\beta \in\Gamma }k(\alpha ,\beta )
u(\beta ) \in B\hbox{ and }\Vert v\Vert_B\le C\Vert f\Vert_{\ell^1}
\Vert u\Vert_B,
}
where $C$ is independent of $k,u$. In fact,
$$
u\in B\Rightarrow \vert u \vert \in B\Rightarrow f*\vert u \vert\in B,
$$
and $v$ in \no{ge.7} satisfies $\vert v \vert\le f*\vert u \vert$ pointwise.

\par Let $\widetilde{\Gamma }\subset E$ be a second lattice and let 
$\widetilde{B}
\subset\ell^\infty (\widetilde{\Gamma })$ satisfy \no{ge.1}, \no{ge.4}. 
We say that $B\prec \widetilde{B}$ if the following property holds for some 
$N>d$:
\eekv{ge.8}
{
&&\hbox{If }u\in B\hbox{ and }\widetilde{u}:\widetilde{\Gamma }\to {\bf C} 
\hbox{ satisfies } \vert \widetilde{u}(\widetilde{\gamma }) \vert\le 
\sum_{\gamma \in \Gamma }\langle \widetilde{\gamma }-\gamma  \rangle^{-N}
\vert u(\gamma ) \vert,\ \widetilde{\gamma }\in \widetilde{\Gamma },
}
{
&&\hbox{then }\widetilde{u}\in \widetilde{B}\hbox{ and }
\Vert \widetilde{u}\Vert_{\widetilde{B}}\le C\Vert u\Vert_B, \hbox{ where } 
C\hbox{ is 
independent of }u,\widetilde{u}.
}

\par If \no{ge.8} holds for one $N>d$ and $M>d$ then it also holds with $N$ 
replaced by $M$. This is obvious when $M\ge N$ and if $d<M<N$, it follows from the observation that 
$$
\langle \widetilde{\gamma }-\gamma  \rangle^{-M}\le C_{N,M} 
\sum_{\widetilde{\beta }\in \widetilde{\Gamma }}\langle \widetilde{\gamma }
-\widetilde{\beta} \rangle^{-M}\langle \widetilde{\beta }-\gamma  \rangle^{-N}
$$
(cf.~\no{co.19.5}, where $I$ is the integral in \no{co.18}, $2n$ is 
replaced by $d$, and we take $M_1=M_2=0$), which allows us to write
$$
\sum_{\gamma \in \Gamma }\langle \widetilde{\gamma }-\gamma  \rangle^{-M}
\vert u(\gamma ) \vert\le C_{N,M}\langle \cdot  \rangle^{-M}*v,
$$
where $v(\beta ):=\sum_{\gamma }\langle \widetilde{\beta }
-\gamma  \rangle^{-N}\vert u(\gamma ) \vert$ and $v$ belongs to 
$\widetilde{B}$ since \no{ge.8} holds.
\begin{dref}\label{ge1}\rm
Let $\Gamma ,\widetilde{\Gamma }$ be two lattices in $E$ and let 
$B,\widetilde{B}$ be Banach spaces of functions on $\Gamma $ and 
$\widetilde{\Gamma }$ respectively, satisfying \no{ge.1}, \no{ge.4}. 
Then we say that $B\equiv \widetilde{B}$, if $B\prec \widetilde{B}$ and 
$\widetilde{B}\prec B$. Notice that this is an equivalence relation.
\end{dref}

\par We can now introduce our generalized symbol spaces. With 
$E\simeq {\bf R}^d$ as above, let $\Gamma \subset E\times E^*$ be a lattice 
and $B\subset \ell^\infty $ a Banach space satisfying \no{ge.1}, \no{ge.4}. 
Let $a\in {\cal S}'(E)$.

\begin{dref}\label{ge2}\rm We say that $a\in \widetilde{S}(m,B)$ if the function
$$
\Gamma \ni \gamma \mapsto {1\over m(\gamma )}\Vert \chi_\gamma ^w a\Vert
$$
belongs to $B$. Here $\chi_\gamma $ is the partiction of unity \no{sy.2}.
\end{dref} 

Proposition \ref{sy2} extends to 
\begin{prop}\label{ge3}
$\widetilde{S}(m,B)$ is a Banach space with the natural norm. If we replace 
$\Gamma ,\chi,B$ by $\widetilde{\Gamma },\widetilde{\chi},\widetilde{B}$, 
having the same properties, and with $\widetilde{B}\subset \ell^\infty 
(\widetilde{\Gamma }) $ equivalent to $B$, and if we further replace 
the $L^2$ norm by the $L^p$
norm for any $p\in [1,\infty ]$, we get the same space, equipped with an 
equivalent norm.
\end{prop}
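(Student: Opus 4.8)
The plan is to follow the scheme of the proof of Proposition \ref{sy2}, replacing the $\ell^\infty$-estimate there by the domination property \no{ge.8} defining $B\prec\widetilde B$, and to treat completeness separately by an absolutely-convergent-series argument that exploits the solidity \no{ge.4}.

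For the independence statement, since $\equiv$ is symmetric it suffices to prove one inclusion with a norm bound, the reverse being identical with the roles of the two data sets exchanged. So I would assume $a\in\widetilde S(m,B)$ relative to $\Gamma,\chi$ and the $L^2$ norm, and set $u(\gamma)=\Vert\chi_\gamma^wa\Vert_{L^2}/m(\gamma)\in B$. First I would invoke Lemma \ref{sy3} to obtain $\psi$ with $\sum_\gamma\psi_\gamma^w\chi_\gamma^w=1$ and write $\widetilde\chi_{\widetilde\gamma}^wa=\sum_{\gamma\in\Gamma}\widetilde\chi_{\widetilde\gamma}^w\psi_\gamma^w\chi_\gamma^wa$. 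Exactly as in \no{sy.3.5}, using the simple calculus of \cite{DiSj}, the composition $\widetilde\chi_{\widetilde\gamma}^w\psi_\gamma^w$ has $ {\cal L}(L^2,L^p)$-norm ${\cal O}(\langle\widetilde\gamma-\gamma\rangle^{-N})$ for every $N$, whence $\Vert\widetilde\chi_{\widetilde\gamma}^wa\Vert_{L^p}\le C_{p,N}\sum_\gamma\langle\widetilde\gamma-\gamma\rangle^{-N}\Vert\chi_\gamma^wa\Vert_{L^2}$. Dividing by $m(\widetilde\gamma)$ and using that $m$ is an order function in the form $m(\gamma)\le C_0\langle\widetilde\gamma-\gamma\rangle^{N_0}m(\widetilde\gamma)$, I obtain for $\widetilde u(\widetilde\gamma):=\Vert\widetilde\chi_{\widetilde\gamma}^wa\Vert_{L^p}/m(\widetilde\gamma)$ the pointwise bound $\widetilde u(\widetilde\gamma)\le C\sum_\gamma\langle\widetilde\gamma-\gamma\rangle^{-(N-N_0)}u(\gamma)$. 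Choosing $N$ so large that $N-N_0>d$, the defining property \no{ge.8} of $B\prec\widetilde B$ applies and yields $\widetilde u\in\widetilde B$ with $\Vert\widetilde u\Vert_{\widetilde B}\le C\Vert u\Vert_B$, that is $a\in\widetilde S(m,\widetilde B)$ with the desired norm bound. The opposite inclusion uses $\widetilde B\prec B$ together with the analogous estimate $\Vert\chi_\gamma^w\widetilde\psi_{\widetilde\gamma}^w\Vert_{ {\cal L}(L^p,L^2)}={\cal O}(\langle\gamma-\widetilde\gamma\rangle^{-N})$, where $\widetilde\psi$ is produced by Lemma \ref{sy3} from $\widetilde\Gamma,\widetilde\chi$; this gives the equivalence of norms, hence equality of the two spaces.

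For completeness I would argue directly on $\widetilde S(m,B)$. Given a Cauchy sequence, pass to a subsequence $(a_n)$ with $\Vert a_{n+1}-a_n\Vert_{\widetilde S(m,B)}\le 2^{-n}$ and set $h^{(n)}(\gamma)=\Vert\chi_\gamma^w(a_{n+1}-a_n)\Vert/m(\gamma)$, so $h^{(n)}\in B$ with $\Vert h^{(n)}\Vert_B\le 2^{-n}$. Since $B$ is complete the series $\sum_nh^{(n)}$ converges in $B$, and its tails $T^{(N)}=\sum_{n\ge N}h^{(n)}$ satisfy $\Vert T^{(N)}\Vert_B\to0$. Because $B\subset\ell^\infty$ by \no{ge.5}, $(a_n)$ is also Cauchy in $\widetilde S(m)$, which is a Banach space by Proposition \ref{sy2}; let $a$ be its limit there, so that $\chi_\gamma^wa_n\to\chi_\gamma^wa$ in $L^2$ for each $\gamma$. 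Then $\Vert\chi_\gamma^w(a_N-a)\Vert\le\sum_{n\ge N}\Vert\chi_\gamma^w(a_{n+1}-a_n)\Vert$, i.e. the norm-sequence $w^{(N)}(\gamma)=\Vert\chi_\gamma^w(a_N-a)\Vert/m(\gamma)$ is dominated pointwise by $T^{(N)}\in B$. The solidity property \no{ge.4} then gives $w^{(N)}\in B$ with $\Vert w^{(N)}\Vert_B\le C\Vert T^{(N)}\Vert_B\to0$, which shows simultaneously that $a\in\widetilde S(m,B)$ and that $a_n\to a$ in $\widetilde S(m,B)$; the full Cauchy sequence then converges as well.

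The operator-norm estimates on the compositions $\widetilde\chi_{\widetilde\gamma}^w\psi_\gamma^w$ are exactly those already used in \no{sy.3.5}, and I would simply quote them. The one point requiring genuine care is completeness: a naive passage to a pointwise limit would require a Fatou property that $B$ need not possess (for instance $B=c_0(\Gamma)$ fails it), so it is essential to use the absolutely-convergent-series reformulation, in which the tail $T^{(N)}$ lives in $B$ and dominates the relevant norm-sequences, letting \no{ge.4} do the work. This is the main obstacle, and the series argument is the way around it.
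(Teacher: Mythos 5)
Your argument for the equivalence/independence statement is the same as the paper's: both start from the decomposition $\widetilde{\chi}_{\widetilde{\gamma }}^wa=\sum_\gamma \widetilde{\chi}_{\widetilde{\gamma }}^w\psi _\gamma ^w\chi _\gamma ^wa$ of Lemma \ref{sy3}, use the operator bounds behind \no{sy.3.5} and the order-function property to get the pointwise domination $\widetilde{u}(\widetilde{\gamma })\le C\sum_\gamma \langle \widetilde{\gamma }-\gamma \rangle ^{-(N-N_0)}u(\gamma )$, and then feed this into the defining property \no{ge.8} of $B\prec \widetilde{B}$ (the paper phrases this as ``since $B$, $\widetilde{B}$ are equivalent''), with the reverse inclusion obtained symmetrically. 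Where you go beyond the paper is the completeness claim: the paper, as in Proposition \ref{sy2}, declares that the Banach space property ``follows from the other arguments'' and never writes it out, whereas you give an explicit absolutely-convergent-series argument, using \no{ge.5} to locate the limit in $\widetilde{S}(m)$, dominating the tail norm-sequences $w^{(N)}$ by $T^{(N)}\in B$, and invoking the solidity \no{ge.4} to conclude. Your remark that a naive pointwise-limit argument would need a Fatou property that a general solid $B$ (e.g.\ $c_0(\Gamma )$) need not have is exactly the point that makes the series reformulation necessary, and it is a worthwhile addition since the paper leaves this step entirely implicit. No gaps; the proof is correct.
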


\begin{proof}
It suffices to follow the proof of Proposition \ref{sy2}: From the estimate 
\no{sy.3.5} we get for any $N\ge 0$,
$$
{1\over m(\widetilde{\gamma })}\Vert \chi_{\widetilde{\gamma }}^wa\Vert
_{L^p}\le C_{p,N}\sum_{\gamma \in \Gamma }\langle \widetilde{\gamma }-\gamma 
 \rangle^{-n}{1\over m(\gamma )}\Vert \chi_\gamma ^wa\Vert_{L^2},
$$
where we also used that $m$ is an order function. Hence, since $B$, 
$\widetilde{B}$ are equivalent,
$$
\Vert {1\over m(\cdot )}\Vert \widetilde{\chi}^w_a\cdot \Vert_{L^p}\Vert_{\widetilde{B}}\le 
\Vert {1\over m(\cdot )}\Vert {\chi}^w_\cdot a \Vert_{L^2}\Vert_{{B}}.
$$
The reverse estimate is obtained the same way.
\end{proof}

\par As a preparation for the use of Bargmann transforms, we next develop a 
``continuous'' version of $B$-spaces; a kind of amalgam spaces 
in the sense of \cite{GrSt, Fe2, FoSt}.
 Let $\Gamma $ be a lattice in a 
$d$-dimensional real vector space $E$ and let $B\subset \ell^\infty (\Gamma )$
 satisfy \no{ge.1}, \no{ge.4}. Let $0\le \chi\in C_0^\infty (E)$ satisfy 
$\sum_{\gamma \in \Gamma }\tau_\gamma \chi >0$.
\begin{dref}\label{ge6}\rm
We say that the locally bounded measurable function $u:E\to {\bf C}$ is 
of class $[B]$, if there exists $v\in B$ such that 
\ekv{ge.8.3}
{
\vert u(x) \vert \le \sum_{\gamma \in \Gamma }v(\gamma )\tau_\gamma \chi(x).
}
\end{dref}

\par The space of such functions is a Banach space that we shall denote by 
$[B]$, equipped with the norm 
\ekv{ge.8.5}
{
\Vert u\Vert_{[B]}=\inf \{\Vert v\Vert_B;\, \no{ge.8.3}\hbox{ holds }\}.
}
This space does not depend on the choice of $\chi$ and we may actually 
characterize it as the space of all locally bounded measurable functions $u$
on $E$ such that 
\ekv{ge.9}
{
\vert u(x) \vert \le \sum_{\gamma \in \Gamma }w(\gamma )
\langle x-\gamma  \rangle^{-N},\hbox{ for some  }w\in B,
}
where $N>d$ is any fixed number. Clearly \no{ge.8} implies \no{ge.9}. 
Conversely, if $u$ satisfies \no{ge.9} and $\chi$ is as in Definition 
\ref{ge6}, then 
$$
\langle x \rangle^{-N}\le C\sum_{\alpha \in \Gamma }\langle \alpha  \rangle
^{-N}\tau_\alpha \chi (x),
$$
so if \no{ge.9} holds, we have,
\begin{eqnarray*}
\vert u(x) \vert&\le& C\sum_{\gamma }w(\gamma )\sum_{\alpha }
\langle \alpha  \rangle^{-N}\chi (x-(\gamma +\alpha ))\\
&=& C\sum_{\beta }(\langle \cdot  \rangle^{-N}*w)(\beta )\chi (x-\beta ),
\end{eqnarray*}
and $\langle \cdot  \rangle^{-N}*w\in B$. 

\par Similarly, the definition does not 
change if we replace $B\subset\ell^\infty(\Gamma ) $ by an equivalent space 
$\widetilde{B}\subset\ell^\infty(\widetilde{\Gamma }) $.

\par Let $m_1,m_2,m_3$ be order functions on $E_1\times E_2$, $E_2\times E_3$,
 $E_1\times E_3$ respectively, where $E_j$ is a real vectorspace of 
dimension $d_j$.
Let $\Gamma _j\subset E_j$ be lattices and let
$$
B_1\subset\ell^\infty (\Gamma _1\times \Gamma _2),\ 
B_2\subset\ell^\infty (\Gamma _2\times \Gamma _3),\ 
B_3\subset\ell^\infty (\Gamma _1\times \Gamma _3)
$$ 
be Banach spaces satisfying \no{ge.1}, \no{ge.4}.  Introduce the 
\begin{ass}\label{ge7}\rm
If $k_j\in m_jB_j$, $j=1,2$, then 
$$
k_3(\alpha ,\beta ):=\sum_{\gamma \in \Gamma _2}k_1(\alpha ,\gamma )
k_2(\gamma ,\beta )
$$
converges absolutely for every $(\alpha ,\beta )\in \Gamma _1\times 
\Gamma _3$. Moreover, $k_3\in m_3B_3$ and
$$
\Vert k_3/m_3\Vert_{B_3}\le C\Vert k_1/m_1\Vert_{B_1}\Vert k_2/m_2\Vert_{B_2}
$$
where $C$ is independent of $k_1,k_2$.
\end{ass}

\par Again, it is an easy exercise to check that the assumption is invariant 
under changes of the lattices $\Gamma _j$ and the passage to corresponding 
equivalent $B$-spaces.
\begin{prop}\label{ge8}
We make the Assumption \ref{ge7}, where $B_j$ satisfy \no{ge.1}, \no{ge.4}. 
Let 
$K_j\in m_j[B_j]$ for $j=1,2$ in the sense that $K_j/m_j\in [B_j]$. 
Then the integral 
$$
K_3(x,y):=\int_{E_2}K_1(x,z)K_2(z,y)dz,\ (x,y)\in E_1\times E_3,
$$
converges absolutely and defines a function $K_3\in m_3[B_3]$. Moreover, 
$$
\Vert K_3/m_3\Vert_{[B_3]}\le C\Vert K_1/m_1\Vert_{[B_1]}
\Vert K_2/m_2\Vert_{[B_2]},
$$
where $C$ is independent of $K_1$, $K_2$.
\end{prop}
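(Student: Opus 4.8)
The plan is to discretize the problem so as to reduce it to the lattice-level Assumption \ref{ge7}. First I would use the characterization \no{ge.9} of the spaces $[B_j]$: since $K_j/m_j\in[B_j]$, there exist $w_j\in B_j$ with $|K_j|\le m_j\sum w_j(\cdot)\langle\,\cdot-\cdot\,\rangle^{-N}$ for an arbitrarily large $N$. Absorbing $m_j$ through the order-function inequality \no{sy.1} (so that $m_1(x,z)\le C_0\langle(x,z)-(\alpha,\gamma)\rangle^{N_0}m_1(\alpha,\gamma)$) turns this into a pure majorization $|K_1(x,z)|\le C_0\sum_{(\alpha,\gamma)}k_1(\alpha,\gamma)\langle(x,z)-(\alpha,\gamma)\rangle^{-P}$ with $k_1:=m_1w_1\in m_1B_1$ and $P=N-N_0$ as large as we wish, and similarly for $K_2$.

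Next I would insert these two majorants into $|K_3(x,y)|\le\int_{E_2}|K_1(x,z)||K_2(z,y)|\,dz$. Splitting each bracket by the elementary inequality $\langle(a,b)\rangle\ge(\langle a\rangle\langle b\rangle)^{1/2}$ separates the $x$- and $y$-variables from the integration variable $z$, and the remaining integral $\int_{E_2}\langle z-\gamma\rangle^{-P/2}\langle z-\gamma'\rangle^{-P/2}\,dz$ is controlled by the Schur/Peetre estimate already recorded in \no{co.19.5} (with $2n$ replaced by $d_2$), producing a factor $\langle\gamma-\gamma'\rangle^{-Q}$ with $Q$ large once $P$ is large. At this point I have a fourfold lattice sum in which the two inner indices $\gamma,\gamma'\in\Gamma_2$ are coupled only through $\langle\gamma-\gamma'\rangle^{-Q}$.

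The decisive step is to decouple these inner indices so that Assumption \ref{ge7}, which composes kernels along a single shared index, becomes applicable. I would do this by a smearing: set $\ell_1(\alpha,\gamma'):=\sum_{\gamma}k_1(\alpha,\gamma)\langle\gamma-\gamma'\rangle^{-Q}$ and check that $\ell_1\in m_1B_1$. Dividing by $m_1$ and invoking \no{sy.1} once more bounds $\ell_1/m_1$ by the convolution of $k_1/m_1\in B_1$ with $\langle\cdot\rangle^{-(Q-N_0)}\in\ell^1(\Gamma_2)$ in the $\Gamma_2$-variable; by the mapping property \no{ge.7} (realized as a convolution on the product lattice $\Gamma_1\times\Gamma_2$ whose kernel vanishes unless the $\Gamma_1$-components coincide) together with the domination axiom \no{ge.4}, this lies in $B_1$ with $\Vert\ell_1/m_1\Vert_{B_1}\le C\Vert k_1/m_1\Vert_{B_1}$. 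Now Assumption \ref{ge7} applied to $\ell_1\in m_1B_1$ and $k_2\in m_2B_2$ yields $k_3(\alpha,\beta):=\sum_{\gamma'}\ell_1(\alpha,\gamma')k_2(\gamma',\beta)\in m_3B_3$ with the product norm bound.

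It remains to return to the continuous side. The fourfold sum is now dominated by $C\sum_{(\alpha,\beta)}k_3(\alpha,\beta)\langle x-\alpha\rangle^{-P/2}\langle y-\beta\rangle^{-P/2}$; using $\langle x-\alpha\rangle^{-s}\langle y-\beta\rangle^{-s}\le\langle(x,y)-(\alpha,\beta)\rangle^{-s}$ and the order function \no{sy.1} for $m_3$ to pull $1/m_3(x,y)$ inside, I obtain $|K_3(x,y)/m_3(x,y)|\le C\sum_{(\alpha,\beta)}(k_3/m_3)(\alpha,\beta)\langle(x,y)-(\alpha,\beta)\rangle^{-R}$ with $R=P/2-N_0>d_1+d_3$ once $P$ is chosen large. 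By the characterization \no{ge.9} this says exactly $K_3/m_3\in[B_3]$, and taking infima over the majorants $w_1,w_2$ gives $\Vert K_3/m_3\Vert_{[B_3]}\le C\Vert K_1/m_1\Vert_{[B_1]}\Vert K_2/m_2\Vert_{[B_2]}$. Since every estimate above is between nonnegative quantities, Tonelli guarantees the absolute convergence of the defining integral and justifies every interchange of sum and integral. The main obstacle I anticipate is the bookkeeping of the polynomial exponents—arranging that a single choice of $N$ (hence $P$, $Q$, $R$) is simultaneously large enough for the $z$-integral, the $\ell^1$-convolution in the smearing, and the final $[B_3]$-characterization—and the care needed to present that inner smearing as a genuine $\Gamma_1\times\Gamma_2$ convolution so that \no{ge.7} truly applies.
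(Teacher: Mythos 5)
Your proposal is correct, and its overall skeleton (discretize $K_1,K_2$ via the amalgam characterization of $[B_j]$, reduce to the lattice-level Assumption \ref{ge7}, then resynthesize) is the same as the paper's; but you handle the key technical point --- the coupling between the two inner $\Gamma_2$-indices --- quite differently. The paper uses the compactly supported characterization \no{ge.8.3} of $[B_j]$ rather than the polynomial-decay one \no{ge.9}: majorizing $|K_j|$ by $\sum k_j(\alpha,\gamma)\chi^{(j)}(x-\alpha,z-\gamma)$ with $\chi^{(j)}\in C_0^\infty$ makes the coupling factor $F(x-\alpha,y-\beta;\gamma-\gamma')=\int \chi^{(1)}(x-\alpha,z-\gamma)\chi^{(2)}(z-\gamma',y-\beta)\,dz$ vanish for all but finitely many values of the shift $\gamma-\gamma'$, so the paper simply applies Assumption \ref{ge7} once per shift (to $\frac{1}{m_1(\cdot,\cdot\cdot)}k_1(\cdot,\cdot\cdot+\gamma)\in B_1$, which lies in $B_1$ by translation invariance \no{ge.1} and the order-function property) and sums the finitely many resulting terms. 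Your polynomial-tail majorants instead produce an infinitely supported coupling $\langle \gamma-\gamma'\rangle^{-Q}$, which forces the extra ``smearing'' lemma $\ell_1=k_1*_{\Gamma_2}\langle\cdot\rangle^{-Q}\in m_1B_1$; that step is sound --- the kernel $\delta_{\alpha,\alpha''}\langle\gamma-\gamma'\rangle^{-(Q-N_0)}$ is dominated by an $\ell^1$ function of the difference on $\Gamma_1\times\Gamma_2$, so \no{ge.7} together with \no{ge.4} applies exactly as you say --- but it is machinery the paper's compact-support trick renders unnecessary. What your route buys is that it works directly from the characterization \no{ge.9} without ever choosing compactly supported windows, and it makes visible that only the convolution stability \no{ge.6}--\no{ge.7} of $B_1$ in its second variable is being used; what the paper's route buys is brevity and a cleaner exponent bookkeeping, since no large parameters $P,Q,R$ need to be tracked against one another.
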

\begin{proof}
Write 
\begin{eqnarray*}
\vert K_1(x,z) \vert&\le& \sum_{\Gamma _1\times \Gamma _2}
k_1(\alpha ,\gamma )\chi^{(1)}(x-\alpha ,z-\gamma )\\
\vert K_2(z,y) \vert&\le& \sum_{\Gamma _2\times \Gamma _3}
k_2(\gamma ,\beta  )\chi^{(2)}(z-\gamma ,y-\beta ),
\end{eqnarray*}
with $\chi^{(1)}\in C_0^\infty (E_1\times E_2)$,
$\chi^{(2)}\in C_0^\infty (E_2\times E_3)$ as in Definition \ref{ge6} and 
with $k_j\in m_jB_j$. Then
\begin{eqnarray*}
\vert K_3(x,y)&\le&\int_{E_2}\vert K_1(x,z) \vert \vert K_2(z,y) \vert dz\\
&\le &
\sum_{(\alpha ,\beta )\in\Gamma _1\times \Gamma _3\atop \gamma ,\gamma '
\in \Gamma _2} k_1(\alpha ,\gamma )k_2(\gamma ',\beta )
F(x-\alpha ,y-\beta ;\gamma -\gamma '),
\end{eqnarray*}
where
\begin{eqnarray*}
F(x,y;\gamma -\gamma ')&=&\int \chi^{(1)}(x,z-\gamma )\chi^{(2)}(z-\gamma ',y)
dz\\
&=&\int \chi^{(1)}(x,z-(\gamma -\gamma '))\chi^{(2)}(z,y)dz.
\end{eqnarray*}
We notice that $0\le F(x,y;\gamma )\in C_0^\infty (E_1\times E_3)$ and that $F(x,y;\gamma )\not\equiv 0$ only for finitely many $\gamma \in \Gamma $. Hence 
for some $R_0>0$,
$$
\vert K_3(x,y) \vert\le \sum_{\vert \gamma  \vert\le R_0}\sum_{(\alpha ,
\beta )\in \Gamma _1\times \Gamma _3}\Big(\sum_{\gamma '}k_1(\alpha ,\gamma '+
\gamma )k_2(\gamma ',\beta )\Big) F(x-\alpha ,y-\beta ;\gamma ).
$$ 
Since
$$
{1\over m_1(\cdot ,\cdot \cdot )}k_1(\cdot ,\cdot\cdot  +\gamma )\in B_1,
$$
for every fixed $\gamma $, and
$
k_2/m_2\in B_2,
$
the assumption \ref{ge7} implies that 
$$
k_3(\alpha ,\beta ;\gamma ):=\sum_{\gamma '}k_1(\alpha ,\gamma '+\gamma )k_2(\gamma ',\beta )\in m_3B_3,
$$
for every $\gamma \in \Gamma $.

The proposition follows.
\end{proof}

We next generalize \no{di.1}. Let $F={\bf R}^d$ and define $T:L^2(F)\to 
H_\Phi (F^{\bf C})$ as in \no{l2.8}--\no{l2.11}. Let $m$ be an order function 
on $F\times F^*$, let $\Gamma \subset F\times F^*$ be a lattice and let 
$B\subset\ell^\infty (\Gamma )$ satisfy \no{ge.1}, \no{ge.4}. Then we get
\begin{prop}\label{ge9} we have
\ekv{ge.10}
{
\widetilde{S}(m,B)=\{ u\in {\cal S}'(F);\, {1\over m}\Big( (e^{-\Phi } 
Tu)\circ \pi \circ \kappa _T\Big)\in [B]\},
}
where $\pi :\Lambda _\Phi \ni (x,\xi )\mapsto x\in F^{\bf C}$ is the 
natural projection.
\end{prop}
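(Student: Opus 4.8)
The plan is to run the proof of Proposition \ref{di1} line by line, replacing the pointwise scalar bound ``$\le C\,m(\gamma )$'' used there by the corresponding statement of membership in $B$ and $[B]$, and invoking the lattice-space axioms \no{ge.4}, \no{ge.6} together with the characterization \no{ge.9} of $[B]$ precisely at the places where \ref{di1} used only that $m$ is an order function. All the operator-theoretic input (the effective-kernel bound \no{di.1.5} for $\psi _\gamma ^w$ and $\chi _\gamma ^w$, and the unitarity $T^*T=1$) is reused unchanged; what is new is the bookkeeping needed to keep every estimate in the shape ``an $\ell ^1(\Gamma )$-kernel applied to a fixed element of $B$''.

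For the forward inclusion, let $u\in\widetilde S(m,B)$, so that $b(\gamma ):={1\over m(\gamma )}\Vert \chi _\gamma ^wu\Vert \in B$, and decompose $u=\sum_\gamma \psi _\gamma ^w\widetilde u_\gamma $, $\widetilde u_\gamma =\chi _\gamma ^wu$, as in Lemma \ref{sy3}. Applying $T$ and estimating the effective kernel \no{di.1.5} of $\psi _\gamma ^w$ by Cauchy--Schwarz (exactly as in \ref{di1}) gives
$$
e^{-\Phi (x)}\vert T\psi _\gamma ^w\widetilde u_\gamma (x)\vert \le C_N\langle x-\gamma \rangle ^{-N}\Vert \widetilde u_\gamma \Vert =C_N\langle x-\gamma \rangle ^{-N}m(\gamma )b(\gamma ).
$$
Summing over $\gamma $ and using the order-function inequality to absorb $m(\gamma )$ into $\langle x-\gamma \rangle ^{N_0}m(x)$, where $x$ is identified with a point of $F\times F^*$ via $\pi \circ \kappa _T$, I obtain
$$
{e^{-\Phi (x)}\vert Tu(x)\vert \over m(x)}\le C_N'\sum_\gamma \langle x-\gamma \rangle ^{-(N-N_0)}b(\gamma ).
$$
Choosing $N-N_0>d$ and reading off \no{ge.9} with $w=b\in B$, this is exactly the statement that ${1\over m}\big( (e^{-\Phi }Tu)\circ \pi \circ \kappa _T\big)\in [B]$.

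Conversely, assume that function is of class $[B]$, so by \no{ge.9} there is $w\in B$ with $e^{-\Phi (x)}\vert Tu(x)\vert \le m(x)\sum_{\gamma '}w(\gamma ')\langle x-\gamma '\rangle ^{-N}$. Writing $T\chi _\gamma ^wu=T\chi _\gamma ^wT^*\,Tu$ and using that $\chi _\gamma ^w$ also satisfies \no{di.1.5}, I would estimate
$$
e^{-\Phi (x)}\vert T\chi _\gamma ^wu(x)\vert \le C_N\langle x-\gamma \rangle ^{-N}\int \langle y-\gamma \rangle ^{-N}m(y)\sum_{\gamma '}w(\gamma ')\langle y-\gamma '\rangle ^{-N}L(dy).
$$
Transferring $m(y)$ to $m(\gamma )$ by the order-function inequality and carrying out the $y$-integral by the convolution estimate behind \no{co.19.5}, the right-hand side becomes $C_N'\,m(\gamma )\langle x-\gamma \rangle ^{-N}\sum_{\gamma '}\langle \gamma -\gamma '\rangle ^{-(N-N_0)}w(\gamma ')$; taking $L^2$ norms in $x$ then yields
$$
{1\over m(\gamma )}\Vert \chi _\gamma ^wu\Vert \le C_N''\sum_{\gamma '}\langle \gamma -\gamma '\rangle ^{-(N-N_0)}w(\gamma ').
$$
For $N-N_0>d$ the kernel $\langle \cdot \rangle ^{-(N-N_0)}$ lies in $\ell ^1(\Gamma )$, so by \no{ge.6} the right-hand side belongs to $B$, and by the domination axiom \no{ge.4} so does the left-hand side; hence $u\in\widetilde S(m,B)$.

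No single step is genuinely hard, and the computations are routine once set up; the substance is organizational. The only point requiring care is the translation of the scalar estimates of \ref{di1} into $B$-statements: in both directions one must package every bound as a summable translation-kernel acting on a fixed $B$-function, so that \no{ge.4}, \no{ge.6}, and \no{ge.9} apply verbatim. As in \ref{di1}, the hypothesis that $m$ is an order function enters only to produce the extra factor $\langle \cdot \rangle ^{N_0}$, which is then beaten by taking $N$ large enough.
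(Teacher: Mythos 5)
Your argument is correct and follows essentially the same route as the paper: the forward inclusion via the decomposition $u=\sum_\gamma \psi_\gamma^w\chi_\gamma^w u$ of Lemma \ref{sy3} and the effective-kernel bound \no{di.1.5}, and the converse via the effective kernel of $\chi_\gamma^w$, a convolution estimate in $y$, and the order-function inequality to shift $m$ onto the lattice point. The paper's proof is terser (it keeps everything phrased in terms of $mB$ and $m[B]$ rather than dividing by $m$ explicitly, and leaves the appeal to \no{ge.4}, \no{ge.6}, \no{ge.9} implicit), but the content is identical.
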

\begin{proof}
This will be a simple extension of the proof of \no{di.1}. As there, we 
identify 
$F^{\bf C}$ with $F\times F^*$ by means of $\pi \circ \kappa _T$
and work on the latter space. Assume first that $u\in \widetilde{S}(m,B)$ 
and write 
$u=\sum_{\gamma \in \Gamma }\psi _\gamma ^w\chi_\gamma ^w u$ as in Lemma 
\ref{sy3}, so that $(\Vert \chi_\gamma ^w u\Vert )_{\gamma \in \Gamma }\in mB$.
Using \no{di.1.5}, we see that 
$$
| e^{-\Phi /h}Tu(x)|\le C_N \sum_{\gamma \in \Gamma } \Vert \chi_\gamma ^w u
\Vert \langle x-\gamma \rangle^{-N}, 
$$
and hence $e^{-\Phi }Tu\in m[B]$, i.e. $u$ belongs to the right hand side
of \no{ge.10} (with the identification $\pi \circ \kappa _T$).

\par Conversely, if $e^{-\Phi }Tu\in m[B]$, then since the effective kernel of 
$\chi_\gamma ^w$ satisfies \no{di.1.5}, we see that 
$$
|
e^{-\Phi }T\chi_\gamma ^w u(x) |\le C_N \int
 \langle x-\gamma \rangle^{-N} 
\langle y-\gamma  \rangle^{-N} 
\sum_{\alpha \in \Gamma }\langle y-\alpha \rangle ^{-N} a_\alpha dy,
$$
where $(a_\alpha )\in mB$. It follows that 
$$
|
e^{-\Phi }T\chi_\gamma ^w u(x) |\le \widetilde{C}_N 
 \langle x-\gamma \rangle^{-N} 
\sum_{\alpha \in \Gamma }\langle \gamma -\alpha \rangle ^{-N} a_\alpha 
=\widetilde{C}_N \langle x-\gamma  \rangle ^{-N}b_\gamma , 
$$
where $(b_\gamma )_{\gamma \in \Gamma }\in mB$, and hence 
$
\Vert \chi_\gamma ^w u\Vert\le \widehat{C}_N b_\gamma 
$,
so $u\in \widetilde{S}(m,B)$.
\end{proof}

\par
From this, we deduce as in \no{di.14} that if $a\in{\cal S}'(E)$, 
$E=F\times F^*$, then $a\in \widetilde{S}(m,B)$ iff
\ekv{ge.11}
{
K^{\rm eff\,}_{a^w}(t-{1\over 2}Jt^*,t+{1\over 2}Jt^*)\in m[B],
}
where $K^{\rm eff}_{a^w}$ is the effective kernel of $a^w$ in 
\no{di.4.5}, \no{di.5} after identification of ${\bf C}^d=F^{\bf C}$ with $E$
via the map $\pi \circ\kappa _T=E\to F^{\bf C}$. We recall the identity 
\no{di.15} for the composition of two symbols.

\par \no{ge.11} can also be written 
\ekv{ge.12}
{
K^{\rm eff}_{a^w}(x,y)\in \widetilde{m}[\widetilde{B}],
\hbox{ where } \widetilde{m}=m\circ q,\ [\widetilde{B}]=[B]\circ q,
}
where $q$ is given in \no{co.15.5}.
\par The following generalization of Theorem \ref{co2} now follows from
Proposition \ref{ge8}.
\begin{theo}\label{ge10}
For $j=1,2,3$, let $m_j$ be an order function $E\times E^*$, where 
$E={\bf R}^n\times 
({\bf R}^n)^*$, let $\Gamma _j\subset E\times E^*$ be a lattice and let 
$B_j\subset 
\ell^\infty (\Gamma _j)$ satisfy \no{ge.1}, \no{ge.4}. Let $\widetilde{m}_j
=m_j\circ q$, $\widetilde{\Gamma }_j=q\inv (\Gamma _j)$, 
$\ell^\infty  (\widetilde{\Gamma }_j)\supset \widetilde{B}_j=B_j\circ q$. Assuming 
(as we may without loss of generality) that $\widetilde{\Gamma }_j=\Gamma \times \Gamma$ where $\Gamma \subset E$ is a lattice, we make the 
Assumption \ref{ge7} for $\widetilde{m}_j\widetilde{B}_j$.

\par Then if $a_j\in \widetilde{S}(m_j,B_j)$, $j=1,2$, the composition $a_3=
a_1\# a_2$ is well defined and belongs to $\widetilde{S}(m_3,B_3)$, in the sense that the corresponding composition of effective kernels in \no{di.15} is given by an absolutely convergent integral and $K^{\rm eff}_{a_3^w}\in 
\widetilde{m}_3[\widetilde{B}_3]$. 
\end{theo}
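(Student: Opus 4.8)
The statement is, in essence, the transcription of Proposition \ref{ge8} to effective kernels, in the same way that the proof of Theorem \ref{co2} given in Section \ref{di} transcribes the kernel composition \no{di.15}. The plan is therefore to move both the hypotheses and the conclusion across the identification \no{ge.11}--\no{ge.12} and to apply Proposition \ref{ge8} exactly once.

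First I would translate the hypotheses. By Proposition \ref{ge9}, rewritten in the form \no{ge.12}, the assumption $a_j\in\widetilde S(m_j,B_j)$ is equivalent to $K^{\rm eff}_{a_j^w}\in\widetilde m_j[\widetilde B_j]$ for $j=1,2$, with comparable norms; here $\widetilde m_j=m_j\circ q$ and $[\widetilde B_j]=[B_j]\circ q$, with $q$ as in \no{co.15.5}. Next I would check that $\widetilde m_j,\widetilde B_j$ are admissible data for Proposition \ref{ge8}. Since $q$ is a linear isomorphism $E\times E\to E\times E^*$ with bounded inverse, $\widetilde m_j=m_j\circ q$ is again an order function on $E\times E$, and $\widetilde B_j=B_j\circ q$ inherits \no{ge.1} and \no{ge.4} on $\widetilde\Gamma_j=q\inv(\Gamma_j)$. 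Invoking the invariance of Assumption \ref{ge7} under changes of lattice and passage to equivalent $B$-spaces (noted just before Proposition \ref{ge8}) together with Proposition \ref{ge3}, I may replace the $\widetilde\Gamma_j$ by the common product lattice $\Gamma\times\Gamma$ and the $\widetilde B_j$ by equivalent spaces on it; this is the reduction announced in the statement. With $E_1=E_2=E_3=E$ and $\Gamma_1=\Gamma_2=\Gamma_3=\Gamma$, the discrete hypothesis of Proposition \ref{ge8} is then exactly Assumption \ref{ge7} for the $\widetilde m_j\widetilde B_j$, which we have assumed.

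I would then apply Proposition \ref{ge8} to $K_1=K^{\rm eff}_{a_1^w}$ and $K_2=K^{\rm eff}_{a_2^w}$: it gives that
$$K_3(\widetilde x,\widetilde y)=\int_E K^{\rm eff}_{a_1^w}(\widetilde x,\widetilde z)\,K^{\rm eff}_{a_2^w}(\widetilde z,\widetilde y)\,L(d\widetilde z)$$
converges absolutely, that $K_3\in\widetilde m_3[\widetilde B_3]$, and that $\Vert K_3/\widetilde m_3\Vert_{[\widetilde B_3]}\le C\Vert K_1/\widetilde m_1\Vert_{[\widetilde B_1]}\Vert K_2/\widetilde m_2\Vert_{[\widetilde B_2]}$. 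By the identity \no{di.15} this $K_3$ is precisely $K^{\rm eff}_{a_3^w}$ with $a_3=a_1\# a_2$, and the two displayed facts are, by definition, the assertion that $a_1\# a_2$ is well defined and that $K^{\rm eff}_{a_3^w}\in\widetilde m_3[\widetilde B_3]$. Reading \no{ge.11} backwards (the converse direction of Proposition \ref{ge9}) then yields $a_3\in\widetilde S(m_3,B_3)$ with the corresponding norm bound.

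I expect the only delicate point to be the bookkeeping of the change of variables $q$ in the second step: one must verify that the passage to the product lattice $\Gamma\times\Gamma$ and to equivalent $\widetilde B_j$ genuinely preserves Assumption \ref{ge7}, so that the discrete composition hypothesis, naturally living on $E\times E^*$, matches the one Proposition \ref{ge8} consumes on $E\times E$. The well-definedness of $a_1\# a_2$ itself requires nothing new: exactly as in Section \ref{di}, it rests on the uniqueness of the holomorphic kernel whose effective kernel is of temperate growth, here guaranteed since every element of $\widetilde m_3[\widetilde B_3]$ is of at most polynomial growth.
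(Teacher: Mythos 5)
Your proposal is correct and follows essentially the same route as the paper, which proves Theorem \ref{ge10} precisely by combining the Bargmann-side characterization \no{ge.11}--\no{ge.12} (via Proposition \ref{ge9}), the composition identity \no{di.15}, and a single application of Proposition \ref{ge8}. You have merely made explicit the bookkeeping (transport by $q$, reduction to the product lattice, invariance of Assumption \ref{ge7}) that the paper leaves implicit in its one-line deduction.
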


We next consider the action of pseudodifferential operators on generalized 
symbol spaces. Our result will be essentially a special case of the preceding 
theorem. We start by ``contracting'' Assumption \ref{ge7} to the case when $E_3=0$. 
\par Let $m_1,m_2,m_3$ be order functions on $E_1\times E_2$, $E_2$,
 $E_1$ respectively. Let $\Gamma _j\subset E_j$, $j=1,2$ be lattices and let
$$
B_1\subset\ell^\infty (\Gamma _1\times \Gamma _2),\ 
B_2\subset\ell^\infty (\Gamma _2),\ 
B_3\subset\ell^\infty (\Gamma _1)
$$ 
be Banach spaces satisfying \no{ge.1}, \no{ge.4}.  Assumption \ref{ge7} becomes 
\begin{ass}\label{ge11}\rm
If $k_j\in m_jB_j$, $j=1,2$, then 
$$
k_3(\alpha )=\sum_{\beta \in \Gamma_2 }k_1(\alpha ,\beta )
k_2(\beta)
$$
converges absolutely for every $\alpha\in \Gamma _1$, and we have 
$k_3\in m_3B_3$. Moreover,
$$
\Vert k_3/m_3\Vert_{B_3}\le C\Vert k_1/m_1\Vert_{B_1}\Vert k_2/m_2\Vert_{B_2}
$$
where $C$ is independent of $k_1,k_2$.
\end{ass}

\par The corresponding ``contraction'' of Proposition \ref{ge8} becomes
\begin{prop}\label{ge12}
Let  Assumption \ref{ge11} hold, where $B_j$ satisfy \no{ge.1}, \no{ge.4}. Let 
$K_j\in m_j[B_j]$ for $j=1,2$. Then the integral 
$$
K_3(x):=\int_{E_2}K_1(x,z)K_2(z)dz,\ x\in E_1,
$$
converges absolutely and defines a function $K_3\in m_3[B_3]$. Moreover, 
$$
\Vert K_3/m_3\Vert_{[B_3]}\le C\Vert K_1/m_1\Vert_{[B_1]}
\Vert K_2/m_2\Vert_{[B_2]},
$$
where $C$ is independent of $K_1$, $K_2$.
\end{prop}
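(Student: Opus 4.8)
The plan is to repeat the proof of Proposition \ref{ge8} almost verbatim, with the third space $E_3$ (equivalently the variable $y$ and the lattice index $\beta $) suppressed; indeed one may formally regard the present statement as the degenerate case $E_3=\{0\}$, $\Gamma _3=\{0\}$, $B_3={\bf C}$ of that proposition, under which Assumption \ref{ge7} collapses exactly to Assumption \ref{ge11}. To keep the argument self-contained I would nevertheless carry it out directly.

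First I would use Definition \ref{ge6} to majorize the two factors: pick $0\le \chi^{(1)}\in C_0^\infty(E_1\times E_2)$, $0\le \chi^{(2)}\in C_0^\infty(E_2)$ and (nonnegative) sequences $k_1\in m_1B_1$, $k_2\in m_2B_2$ with
$$
\vert K_1(x,z) \vert\le \sum_{(\alpha ,\gamma )\in \Gamma _1\times \Gamma _2}k_1(\alpha ,\gamma )\chi^{(1)}(x-\alpha ,z-\gamma ),\qquad \vert K_2(z) \vert\le \sum_{\gamma '\in \Gamma _2}k_2(\gamma ')\chi^{(2)}(z-\gamma ').
$$
Multiplying and integrating in $z$ produces the ``smeared delta''
$$
F(x;\delta )=\int \chi^{(1)}(x,z-\delta )\chi^{(2)}(z)\,dz\in C_0^\infty(E_1),
$$
which is supported in a fixed compact set in $x$ and, crucially, vanishes identically except for finitely many $\delta =\gamma -\gamma '\in \Gamma _2$, say $\vert \delta \vert\le R_0$. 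This gives the pointwise bound
$$
\vert K_3(x) \vert\le \sum_{\vert \delta \vert\le R_0}\sum_{\alpha \in \Gamma _1}\Big(\sum_{\gamma '\in \Gamma _2}k_1(\alpha ,\gamma '+\delta )k_2(\gamma ')\Big)F(x-\alpha ;\delta ),
$$
and the reduction to finitely many $\delta $ is precisely what delivers absolute convergence of the defining integral (for each $x$ only finitely many $\alpha $ contribute).

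The key step is then to read each inner sum as a discrete contraction covered by Assumption \ref{ge11}. For fixed $\delta $, the translation invariance \no{ge.1} of $B_1$ together with the order function inequality \no{sy.1} for $m_1$ shows that $(\alpha ,\gamma ')\mapsto k_1(\alpha ,\gamma '+\delta )/m_1(\alpha ,\gamma ')$ lies in $B_1$, with norm at most $C_0\langle \delta \rangle^{N_0}\Vert k_1/m_1\Vert_{B_1}$; since also $k_2/m_2\in B_2$, Assumption \ref{ge11} yields
$$
k_3(\cdot ;\delta ):=\sum_{\gamma '}k_1(\cdot ,\gamma '+\delta )k_2(\gamma ')\in m_3B_3,\qquad \Vert k_3(\cdot ;\delta )/m_3\Vert_{B_3}\le C\langle \delta \rangle^{N_0}\Vert k_1/m_1\Vert_{B_1}\Vert k_2/m_2\Vert_{B_2}.
$$
Interpreting $\sum_{\vert \delta \vert\le R_0}\sum_\alpha k_3(\alpha ;\delta )F(x-\alpha ;\delta )$ as an admissible majorant for $K_3/m_3$ in the sense of Definition \ref{ge6} (or its equivalent form \no{ge.9}), and summing the finitely many contributions over $\vert \delta \vert\le R_0$, I conclude $K_3\in m_3[B_3]$; taking near-optimal majorants $k_1,k_2$ in the infima defining $\Vert K_1/m_1\Vert_{[B_1]}$, $\Vert K_2/m_2\Vert_{[B_2]}$ gives the claimed product bound for $\Vert K_3/m_3\Vert_{[B_3]}$.

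I expect no genuine obstacle here. The only two points needing care are the finiteness of the set of relevant $\delta $ (which comes from the compact supports of $\chi^{(1)},\chi^{(2)}$ and legitimizes every interchange of sum and integral), and the verification that shifting the second argument of $k_1$ keeps it in $m_1B_1$ — exactly the \no{ge.1}/\no{ge.4}/order-function manipulation already performed in the proof of Proposition \ref{ge8}.
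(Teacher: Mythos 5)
Your proposal is correct and follows essentially the same route as the paper: the paper gives no separate proof of Proposition \ref{ge12}, presenting it as the ``contraction'' to $E_3=0$ of Proposition \ref{ge8}, and your argument is a faithful reproduction of that proof with the variable $y$ and the index $\beta$ suppressed. The extra details you supply (the $\langle \delta \rangle^{N_0}$ bound justifying that $k_1(\cdot ,\cdot \cdot +\delta )/m_1\in B_1$ via \no{ge.1}, \no{ge.4} and the order-function property, and the finitely-many-$\delta$ reduction giving absolute convergence) are exactly the steps the paper leaves implicit in the proof of Proposition \ref{ge8}.
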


\par We get the following result for the action of pseudodifferential operators
on generalized symbol spaces.
\begin{theo}\label{ge13}
Let $m_2,m_3$ be order functions on $E={\bf R}^n\times ({\bf R}^n)^*$ and let 
$m_1$ be an order function on $E\times E^*$. Let $\widehat{\Gamma }\subset 
E\times E^*$ be a lattice such that $\widetilde{\Gamma }:=q\inv 
(\widehat{\Gamma })=\Gamma \times \Gamma $ where $\Gamma \subset E$ 
is a lattice. Let $\widehat{B}_1\subset \ell^\infty (\widehat{\Gamma })$, 
$B_2,B_3\subset\ell^\infty (\Gamma )$ 
satisfy \no{ge.1}, \no{ge.4}.  We make 
the Assumption \ref{ge11} with $\Gamma _1,\Gamma _2=\Gamma $ and with 
$m_1$, $B_1$ replaced with $\widetilde{m}_1=m_1\circ q$, 
$\widetilde{B}_1=B_1\circ q$, where $q$ is given in \no{co.15.5}.

\par Then, if $a_1\in \widetilde{S}(m_1,B_1)$, $u\in \widetilde{S}(m_2,B_2)$, 
the distribution $v=a_1^w(u)$ is well-defined in $\widetilde{S}(m_3,B_3)$ in the sense that 
$$
e^{-\Phi (x)}Tv(x)=\int K^{\rm eff}_{a_1^w}(x,y) e^{-\Phi (y)}Tu(y) L(dy),
$$
with $K^{\rm eff}_{a_1^w}(x,y)$ as in \no{di.4.5},
converges absolutely for every $x\in {\bf C}^n$ and 
$$
{1\over m_3}((e^{-\Phi }Tv)\circ \pi \circ \kappa _T)\in [B_3],
$$
as in \no{ge.10}.
\end{theo}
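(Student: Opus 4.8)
The plan is to push the entire statement onto the Bargmann transform side of Section~\ref{di} and then read it off from the ``contracted'' composition result, Proposition~\ref{ge12}. First I would pin down the formula that defines the action. With $T:L^2({\bf R}^n)\to H_\Phi({\bf C}^n)$ the Bargmann transform and $A=Ta_1^wT^*$ the conjugated operator, the representation \no{di.4.5}, \no{di.5} reads $e^{-\Phi(x)}(Ag)(x)=\int K^{\rm eff}_{a_1^w}(x,y)\,e^{-\Phi(y)}g(y)\,L(dy)$ for $g\in H_\Phi$. Since $T^*T=1$, putting $v=a_1^wu$ gives $Tv=Ta_1^wT^*Tu=A(Tu)$, which upon taking $g=Tu$ is exactly the asserted identity $e^{-\Phi(x)}Tv(x)=\int K^{\rm eff}_{a_1^w}(x,y)\,e^{-\Phi(y)}Tu(y)\,L(dy)$. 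Note that the kernel $K(x,\overline y)$ behind $K^{\rm eff}_{a_1^w}$ is holomorphic in $x$, so that, once absolute convergence is known, the right-hand side is automatically an element of ${\rm Hol\,}({\bf C}^n)$; thus $v:=T^*(Tv)$ is a bona fide distribution, and its well-definedness is not a separate issue but a by-product of the convergence established below.

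Next I would identify ${\bf C}^n\simeq E$ through $\pi\circ\kappa_T$, as in \no{ge.10}--\no{ge.12}, and recognise the two factors. By Proposition~\ref{ge9}, the hypothesis $u\in\widetilde S(m_2,B_2)$ means precisely that $y\mapsto e^{-\Phi(y)}Tu(y)$, viewed on $E$, lies in $m_2[B_2]$; this is the factor $K_2$ of Proposition~\ref{ge12}. By the characterization \no{ge.12}, the hypothesis $a_1\in\widetilde S(m_1,B_1)$ means that $K^{\rm eff}_{a_1^w}(x,y)$, viewed on $E\times E$, lies in $\widetilde m_1[\widetilde B_1]$ with $\widetilde m_1=m_1\circ q$ and $[\widetilde B_1]=[B_1]\circ q$ (here $q$ is as in \no{co.15.5}); this is the factor $K_1$. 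The integral in the statement is then exactly the contraction $K_3(x)=\int_{E_2}K_1(x,z)K_2(z)\,dz$ of Proposition~\ref{ge12}, with $E_1=E_2=E$, the only adjustment being the constant Jacobian incurred when $L(dy)$ is replaced by Lebesgue measure on $E$ under the linear map $\pi\circ\kappa_T$, which is harmless.

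It now remains to observe that the Assumption~\ref{ge11} imposed in the theorem is verbatim the hypothesis of Proposition~\ref{ge12} for the triple $(\widetilde m_1\widetilde B_1,\,m_2B_2,\,m_3B_3)$ with $\Gamma_1=\Gamma_2=\Gamma$. Applying that proposition therefore yields at once that the defining integral converges absolutely for every $x$ and that $e^{-\Phi}Tv\in m_3[B_3]$. Reading Proposition~\ref{ge9} in the reverse direction then gives $\frac{1}{m_3}\big((e^{-\Phi}Tv)\circ\pi\circ\kappa_T\big)\in[B_3]$, that is $v\in\widetilde S(m_3,B_3)$, together with the accompanying norm estimate.

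I expect the real work to be bookkeeping rather than analysis. The delicate point is to make the three copies of $E$ (the target slot $x$, the integration slot $z$, and the two kernel slots) mutually consistent under the identifications $\pi\circ\kappa_T$ and $q$, and to confirm that the lattice-level Assumption~\ref{ge11} we have assumed is precisely what feeds Proposition~\ref{ge12} after passage to the $[\,\cdot\,]$-spaces; all the genuine convergence and the continuous-to-discrete transfer are then inherited wholesale from Propositions~\ref{ge9} and~\ref{ge12}. What might at first look like the hard part---showing that $v=a_1^wu$ even makes sense when $u$ is only a tempered distribution---dissolves, since the Bargmann-side formula supplies both the definition of $v$ and, through Proposition~\ref{ge12}, its absolute convergence.
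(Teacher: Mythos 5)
Your proposal is correct and follows exactly the route the paper intends: the paper states Theorem \ref{ge13} without a written proof precisely because it is meant to be read off from Proposition \ref{ge12} combined with the Bargmann-side characterizations in Proposition \ref{ge9} and \no{ge.11}--\no{ge.12}, which is what you do. The identification of $e^{-\Phi}Tu$ with the factor $K_2$, of $K^{\rm eff}_{a_1^w}$ with $K_1$ via \no{ge.12}, and the contraction via Proposition \ref{ge12} is the argument the paper has in mind.
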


\par We shall finally generalize Theorem \ref{cp2}.
\begin{theo}\label{ge14}
Let $p\in [1,\infty ]$ and let $m$ be an order function on $E\times E^*$ where
$E={\bf R}^n\times ({\bf R}^n)^*$. Let $\Gamma \subset E$ be a lattice and 
$B\subset\ell^\infty (q(\Gamma \times \Gamma ))$  a Banach space 
satisfying \no{ge.1}, 
\no{ge.4}. Assume that 
\eekv{ge.13}
{&&
\hbox{if }(a_{\alpha ,\beta })_{\alpha ,\beta \in \Gamma }\in (m\circ q )B
\circ q ,
\hbox{ then }(a_{\alpha ,\beta })\in C_p(\ell^2(\Gamma ),\ell^2
(\Gamma ))
}
{&&\hbox{ and }\Vert (a_{\alpha ,\beta })\Vert_{C_p}
\le C\Vert (a_{\alpha ,\beta }) \Vert_{(m\circ q)B\circ q },
}
where $q $ is given in \no{co.15.5} and $C>0$ is independent of $(a_{\alpha ,\beta })$. Then there is a (new) constant $C>0$ such that 
\ekv{ge.14}
{
\hbox{If }a\in \widetilde{S}(m,B),\hbox{ then }a^w\in C_p(L^2,L^2)\hbox{ and }
\Vert a^w\Vert_{C_p}\le C\Vert a \Vert_{\widetilde{S}(m,B)}.
} 
\end{theo}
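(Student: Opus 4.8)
The plan is to adapt the proof of Theorem \ref{cp2} essentially verbatim, replacing the pointwise majorant $m(q(\alpha,\beta))$ that governed the block matrix there by a majorant controlled in the Banach space $B$, which is furnished by the effective-kernel characterization of $\widetilde{S}(m,B)$. First I would invoke Proposition \ref{ge9} in its kernel form \no{ge.12}: $a\in\widetilde{S}(m,B)$ if and only if the effective kernel $K^{\rm eff}_{a^w}(x,y)$, defined through the Bargmann transform as in \no{di.4.5}, \no{di.5}, lies in $\widetilde{m}[\widetilde{B}]$ with $\widetilde{m}=m\circ q$ and $[\widetilde{B}]=[B]\circ q$. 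By the characterization \no{ge.9} of the amalgam space $[\widetilde{B}]$, there is then $w\in B$ on the lattice $q(\Gamma\times\Gamma)$ with $\Vert w\Vert_B\le C\Vert a\Vert_{\widetilde{S}(m,B)}$ such that, after identifying ${\bf C}^n$ with $E$ via $\pi_x\circ\kappa_T$ and $\Gamma\times\Gamma$ with $q(\Gamma\times\Gamma)$ via $q$,
\[
\vert K^{\rm eff}_{a^w}(x,y)\vert\le m(q(x,y))\sum_{(\alpha,\beta)\in\Gamma\times\Gamma}w(\alpha,\beta)\langle(x,y)-(\alpha,\beta)\rangle^{-N}.
\]

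Restricting to a cell $\vert x-\alpha\vert,\vert y-\beta\vert\le C_0$ and using that $q$ is a fixed linear isomorphism and that $m$ is an order function, so that $m(q(x,y))\le Cm(q(\alpha,\beta))$ there, I would read off
\[
\vert K^{\rm eff}_{a^w}(x,y)\vert\le C\,c_{\alpha,\beta},\quad c_{\alpha,\beta}=m(q(\alpha,\beta))\,\tilde w(\alpha,\beta),\quad \tilde w=\langle\cdot\rangle^{-N}*w,
\]
valid on the cell. By the convolution and solidity properties \no{ge.4}, \no{ge.6}, \no{ge.7} of $B$, the sequence $\tilde w$ again belongs to $B$ with $\Vert\tilde w\Vert_B\le C\Vert w\Vert_B$, so $(c_{\alpha,\beta})\in(m\circ q)B\circ q$ with norm $\le C\Vert a\Vert_{\widetilde{S}(m,B)}$. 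This sequence $(c_{\alpha,\beta})$ now plays exactly the role that $m(q(\alpha,\beta))$ played in Theorem \ref{cp2}.

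From here I would repeat that earlier argument unchanged: factor the effective kernel on each cell as in \no{cp.4}--\no{cp.9} by peeling off the holomorphic magnetic-translation weights, obtaining smooth block kernels $K_{\alpha,\beta}(x,y)$ whose derivatives are bounded by $C_{k,\ell}c_{\alpha,\beta}$ on the cell; introduce the bounded maps $W,W^*$ and the partition of unity \no{cp.10} so that $A_{\rm eff}=W^*{\cal A}W$ with ${\cal A}=(A_{\alpha,\beta})$, reducing the claim to ${\cal A}\in C_p$. Expanding $A_{\alpha,\beta}$ in the Dirichlet--Laplacian eigenbasis $(e_{\alpha,j})$ yields, for every $N$,
\[
\vert K_{\alpha,j;\beta,k}\vert\le C_N\,c_{\alpha,\beta}\,\langle j\rangle^{-N}\langle k\rangle^{-N}.
\]
Writing ${\cal A}$ as the block matrix $(K^{j,k})_{j,k}$ with $K^{j,k}:\ell^2(\Gamma)\to\ell^2(\Gamma)$, the matrix of $K^{j,k}$ is entrywise dominated by $C_N\langle j\rangle^{-N}\langle k\rangle^{-N}(c_{\alpha,\beta})$; by solidity \no{ge.4} this dominating matrix lies in $(m\circ q)B\circ q$ with norm $\le C_N\langle j\rangle^{-N}\langle k\rangle^{-N}\Vert a\Vert_{\widetilde{S}(m,B)}$, so the hypothesis \no{ge.13} applies and gives $\Vert K^{j,k}\Vert_{C_p}\le C_N\langle j\rangle^{-N}\langle k\rangle^{-N}\Vert a\Vert_{\widetilde{S}(m,B)}$. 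Choosing $N>2n$ and summing, $\Vert{\cal A}\Vert_{C_p}\le\sum_{j,k}\Vert K^{j,k}\Vert_{C_p}\le C\Vert a\Vert_{\widetilde{S}(m,B)}$, which is \no{ge.14}.

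The substantive step is the first one, where \no{ge.12} converts the abstract $\widetilde{S}(m,B)$-membership of $a$ into a concrete $B$-valued majorant $(c_{\alpha,\beta})$ for the block matrix; once this is available, the rest is the same mechanism as in Theorem \ref{cp2}. The main thing I expect to have to check with care is that solidity \no{ge.4} and convolution stability \no{ge.6}, \no{ge.7} of the abstract space $B$ survive both the linear identification by $q$ between $\Gamma\times\Gamma$ and $q(\Gamma\times\Gamma)$ and the passage from the continuous amalgam majorant to the discrete cell weights, so that \no{ge.13} may legitimately be invoked block by block; the summability over $j,k$ is then routine, exactly as before.
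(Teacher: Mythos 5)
Your proposal is correct and follows essentially the same route as the paper: reduce to $A_{\rm eff}=W^*{\cal A}W$, replace the pointwise majorant $m(q(\alpha,\beta))$ of Theorem \ref{cp2} by a cell weight $(a_{\alpha,\beta})\in(m\circ q)B\circ q$ coming from the $m\circ q[B\circ q]$-membership of the effective kernel, and then apply hypothesis \no{ge.13} block by block in the Dirichlet eigenbasis before summing over $j,k$. The only difference is that you spell out the passage from the amalgam majorant to the discrete cell weights (via \no{ge.9}, the order-function property and convolution stability \no{ge.6}), which the paper leaves implicit.
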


\par The proof of Proposition \ref{cp1} shows that the property \no{ge.13} 
is invariant under changes $(\Gamma ,B)\mapsto (\widetilde{\Gamma },
\widetilde{B})$ with $\widetilde{B}\subset\ell^\infty (q(\widetilde{\Gamma }
\times \widetilde{\Gamma }))$ 
equivalent to $B$.

\begin{proof}
We follow the proof of Theorem \ref{cp2}. Assume that \no{ge.13} holds and let
$a\in \widetilde{S}(m,B)$ be of norm $\le 1$. It suffices to show that 
$A_{{\rm eff}}:L^2({\bf C}^n)\to L^2({\bf C}^n)$ is in $C_p$ with norm $\le C$,
where $A_{\rm eff}$ is given in \no{cp.3.5} and $K^{\rm eff}$ there belongs 
to $m\circ q [B\circ q ]$, provided that we identify ${\bf C}^n$ with 
$E$ via $\pi \circ \kappa _T$.

\par We see that we still have 
\no{cp.8} where \no{cp.9} should be replaced by 
\eekv{ge.15}
{&&
\vert\nabla _x^k\nabla _y^\ell K_{\alpha ,\beta }(x,y)\vert \le C_{k,\ell}
a_{\alpha ,\beta },\ \vert x-\alpha  \vert,\, \vert y-\beta  \vert\le C_0,
}{&&
(a_{\alpha ,\beta })_{\alpha ,\beta \in \Gamma }\in (m\circ q )B\circ 
q ,\ \alpha ,\beta \in \Gamma .
}
Write $A_{\rm eff}=W^*{\cal A}W$ as in \no{cp.11}, 
$$
{\cal A}:\bigoplus_{\beta \in \Gamma }L^2(\Omega _\beta )\to 
\bigoplus_{\beta \in \Gamma }L^2(\Omega _\beta ),
\quad {\cal A}=(A_{\alpha ,\beta }). 
$$
The matrix elements $K_{\alpha ,j;\beta ;k}$ of $A_{\alpha ,\beta }$ now 
obey the estimate (cf.~\no{cp.12}):
\ekv{ge.16}
{
\vert K_{\alpha ,j;\beta ,k} \vert\le C_N\langle j \rangle^{-N}
\langle k \rangle^{-N} a_{\alpha ,\beta }
}
with $a_{\alpha ,\beta }$ as in \no{ge.16}. Using \no{ge.13}, this leads to \no{cp.12.4} and from that point on the proof is identical to that of Theorem 
\ref{ge14}.
\end{proof}

 \end{document}